\theoremstyle{plain}
\newtheorem{theorem}{Theorem}[section]
\newtheorem{lemma}[theorem]{Lemma}
\newtheorem{proposition}[theorem]{Proposition}
\newtheorem{corollary}[theorem]{Corollary}
\newtheorem{statement}[theorem]{Statement}
\theoremstyle{definition}
\newtheorem{definition}[theorem]{Definition}
\theoremstyle{remark}
\newtheorem*{remark}{Remark}
\crefname{statement}{Statement}{Statements}
\tikzset{fontscale/.style = {font=\relsize{#1}}
    }
\title{Persistent Homology with Path-Representable Distances on Graph Data\thanks{Preprint. Under review.}}
\author{
 \textsuperscript{ab}Eunwoo Heo\thanks{these authors contributed equally to this work}
 \and 
 \textsuperscript{ab}Byeongchan Choi\footnotemark[2]
 \and
 \textsuperscript{ab}Jae-Hun Jung\thanks{Corresponding author \\ \\ Email: hew0920@postech.ac.kr(Eunwoo Heo), bcchoi@postech.ac.kr(Byeongchan Choi), 
 jung153@postech.ac.kr(Jae-Hun Jung)}}
\Crefname{ALC@unique}{Line}{Lines}
\begin{document}

\maketitle

\begin{abstract}
Persistent homology (PH) has been widely applied to graph data to extract topological features.
However, little attention has been paid to how different distance functions on a graph affect the resulting persistence barcodes and their interpretations.
In this paper, we define a class of distances on graphs, called \textit{path-representable distances}, and investigate structural relationships between their induced persistent homologies.
In particular, we identify a nontrivial injection between the $1$-dimensional barcodes induced by two commonly used graph distances: the unweighted and weighted shortest-path distances.
We formally establish sufficient conditions under which such embeddings arise, focusing on a subclass we call \textit{cost-dominated} distances.
The injection property is shown to hold in $0$- and $1$-dimensions, while we provide counterexamples for higher-dimensional cases.
To make these relationships measurable, we introduce the \textit{total persistence difference} (TPD), a new topological measure that quantifies changes between filtrations induced by cost-dominated distances on a fixed graph.
We prove a stability result for TPD when the distance functions admit a partial order and apply the method to the SNAP EU Research Institution E-Mail dataset. 
TPD captures both periodic patterns and global trends in the data, and shows stronger alignment with classical graph statistics compared to an existing PH-based measure applied to the same dataset. 
\end{abstract}

% \vspace{-0.1cm}
\keywords{\small Topological data analysis; persistent homology; graph data.}

\Classifications{\small AMS 00A65, 55N31.}

%%%%
\section{Introduction}\label{sec:Introduction}

Topological data analysis (TDA), developed recently, has proven its usefulness in various applications~\cite{Cohen, Carlsson,ZC2005,EHarer}.
Persistent homology (PH) is one of the primary tools in TDA, using homological characteristics of the data at various scales~\cite{Carlsson}. The PH method involves constructing the complex in different scales sequentially, capturing how homology changes as the complex is being built and refined. This procedure is known as {\it filtration}. These hierarchical changes of homology enable the PH method to provide the topological inference of the data.
 
For the PH method, it is crucial to transform the given data into a suitable embedding space. 
Graph representation is widely used in network science for PH analysis~\cite{Aktas}, including the brain network~\cite{sizemore_netneurtda, stolz_phfunc, leebraintda}, financial science~\cite{gideafinance, GIDEA2018820, MAJUMDAR2020113868}, text mining~\cite{gholizadehtext, gholizadehtext2} and music~\cite{mehmetmakam, tran2023topological}.

Once the appropriate transform is identified, applying the PH method requires defining a proper metric for the transformed geometric object, determining the distance between points in the point cloud or between nodes in the graph representation.
There is no single way to define this metric or distance as there can be multiple definitions.
Definitions of such distances have been empirically established in different fields of study, yet there is a lack of research on how these varying definitions of distance impact PH calculations.
For a brief functorial overview of common graph-to-persistence pipelines in TDA, see Appendix~\ref{app:tda-graphs}.

In this paper, we consider a connected weighted graph \(G = (V, E, W_E)\), where \(W_E: E \rightarrow \mathbb{R}^{> 0}\) is a function that assigns weights to the edges. A natural definition of the distance \(d_{\text{weight}}(v, w)\) between any two points \(v, w \in V\) on the graph can be chosen as the minimum sum of weights along the paths connecting \(v\) and \(w\). This definition of distance, namely the shortest path distance, satisfies the metric conditions and has been thoroughly studied both in theoretical terms~\cite{ahujashortpath, cherkasskypriority, FREDMAN1994533} and in computational terms~\cite{Cherkassky1996, goldbergsimple, zhushortest}, as well as in approximations~\cite{RIZI2018IEEE, POTAMIAS2009ACM}. The definition is also widely used in broad fields of computer science and is especially powerful in improving the performance of neural networks~\cite{NEURIPS2020_2f73168b, pmlr-v198-abboud22a, JI2021IEEE, pmlr-v202-michel23a, li2021distance, abboud2022shortest}.

On the other hand, the shortest path as the path connecting two vertices with the least number of edges can also be considered. 
In a similar sense, there are data in which the relevancy between two nodes is more explained in the path passing through the minimum number of edges than in that with minimum sum of weights.
In such a case, it is natural to define the distance between two vertices using such paths. The first approach was conducted in previous research~\cite{tran2023topological}. 
We canonically reformulated the definition of distance \(d_{\text{edge}}(v, w)\) between two points on the graph to be the minimum sum of weights between the paths connecting \(v\) and \(w\) with the minimum number of edges.

In this research we found an interesting relation between \(d_{\text{weight}}\) and \( d_{\text{edge}}\). That is, for \(d_{\text{weight}} \leq d_{\text{edge}}\), we have \( \textbf{bcd}_1(d_{\text{weight}}) \subseteq \textbf{bcd}_1(d_{\text{edge}})\), where \(\textbf{bcd}_1\) is the $1$-dimensional barcode. 
This property is quite unusual and is generally not expected to hold, as depicted in~\cref{fig:counter_example_embedding}. 
To explore this, we introduce a more general concept of distance, termed \textit{path-representable distance}, and identify a class of \textit{cost-dominated} distances for which this inclusion always holds.
Furthermore, in this paper, we prove that such an injection relation holds among any path-representable cost-dominated distances for $1$-dimensional homology.

Beyond the existence of this injective structure, it is natural to ask how much the $1$-dimensional persistence changes as the distance is altered.
Given two cost-dominated path-representable distances \(d_i \le d_j\) on the same graph, we define the \textit{total persistence difference} (TPD) as a nonnegative quantity that compares the corresponding $1$-dimensional barcodes cycle by cycle.
Each birth edge that appears for both distances contributes the increase in its death value under \(d_j\) relative to \(d_i\), and each birth edge that appears only for \(d_j\) contributes the persistence of the associated interval.
In this way TPD measures how the $1$-dimensional structure extends when the distance on the graph is changed.

We prove that TPD enjoys a Lipschitz stability with respect to the pointwise difference between the underlying distances.
More precisely, for cost-dominated path-representable distances \(d_i \le d_j\) we obtain an upper bound of the form
\[
  \mathrm{Diff}_p\bigl(\textbf{bcd}_1(G,d_j),\,\textbf{bcd}_1(G,d_i)\bigr)
  \le \gamma \,\|d_j-d_i\|_\infty,
\]
where the constant \(\gamma\) depends only on the number of vertices and edges of the graph.

Finally, we investigate how TPD behaves on real temporal graph data.
We apply our framework to the SNAP EU Research Institution E-mail dataset, where for each day we build a weighted graph from all e-mails exchanged during that day and consider two natural path-representable distances on this graph.
On this dataset TPD is typically close to zero on structurally simple days, and becomes larger when the daily graph has higher degree and clustering.
We also compare TPD with a previously proposed PH-based measure computed from overlapping time windows by reproducing the pipeline of~\cite{hajij2018visual}.
In this comparison, TPD shows stronger Pearson correlations with basic graph statistics and better reflects periods of low activity, while the Wasserstein-based day-to-day distance does not decay on weekends where the graph structure is sparse or trivial.

This paper is composed of the following sections.
In~\cref{sec:Distances defined across different domains and their persistent homology}, we explain distinct distance definitions over the given graph, particularly the weight- and edge-minimal distances and the possible relations among them.
In~\cref{sec:Main theory}, we introduce the concept of path-representable distance, a generalization of \(d_{\text{weight}}\) and \(d_{\text{edge}}\) and we introduce the main theory related to path-representable distances.
In~\cref{sec:Proofs of main theory}, we rigorously prove the main propositions and theorem.
In~\cref{sec:Analysis of dimensional variations of the main theorem}, we make an experiment to explain how the main theorem works in higher dimensions.
In~\cref{sec:Data Analysis}, we define the total persistence difference and establish its stability theorem.
In~\cref{sec:Applications}, we apply our method to the e-mail dataset and compare TPD with existing PH-based quantities.
Finally, in~\cref{sec:Conclusion}, we provide a brief concluding remark.
\section{Different distance definitions and corresponding persistent homology}\label{sec:Distances defined across different domains and their persistent homology}

\subsection{Different distance definitions}\label{subsec:Distances defined across different domains}

Let $G=(V,E,W_E)$ be a connected weighted graph.

\begin{definition}\label{def: distance_weight}
    Let \( G=(V,E,W_E) \) be a connected weighted graph. Define the distance \( d_{\text{weight}}: V \times V \rightarrow \mathbb{R}^{\ge 0} \) such that for any two distinct vertices \( v \) and \( w \) in \( V \),
    \[
    d_{\text{weight}}(v,w) = \min_p \left\{ \sum_{e \in E(p)} W_E(e) \mathrel{\bigg|} p \text{ is a path from } v \text{ to } w \right\},
    \]
    and $d_{\text{weight}}(v,w) = 0$ if $v=w$, where \(E(p)\) denotes the set of edges composing the path \(p\).
\end{definition}

Next, consider the definition associated with the paths involved in terms of the minimum number of edges. 
\begin{definition}\label{def: distance_edge}
    Let \( G=(V,E,W_E) \) be a connected weighted graph. 
    Define the distance \( d_{\text{edge}}: V \times V \rightarrow \mathbb{R}^{\ge 0} \) such that for any two distinct vertices \( v \) and \( w \) in \( V \),
    \[
    d_{\text{edge}}(v,w) = \min_p \left\{ \sum_{e \in E(p)} W_E(e) \mathrel{\bigg|} p \text{ is a path between } v \text{ and } w \text{ such that } |E(p)| = m_{vw} \right\},
    \]
    and $d_{\text{edge}}(v,w) = 0$ if $v=w$, where \(E(p)\) denotes the set of edges composing the path \(p\), and \( m_{vw} \) represents the minimum number of edges among all paths from \( v \) to \( w \).
\end{definition}

\begin{proposition}\label{prop:d_weight<=d_edge}
    Let $d_{\text{edge}}$ and $d_{\text{weight}}$ be the distances defined above on a connected weighted graph \( G=(V,E,W_E) \).
    Then the following inequality holds  $$ d_{\text{weight}}(v,w) \leq d_{\text{edge}}(v,w) \text{ for any } v,w \in V$$
\end{proposition}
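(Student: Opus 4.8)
The plan is to observe that $d_{\text{edge}}$ minimizes exactly the same objective as $d_{\text{weight}}$ — the total edge weight $\sum_{e\in E(p)} W_E(e)$ — but over a strictly smaller feasible set: the paths realizing the minimum edge count $m_{vw}$ rather than \emph{all} paths from $v$ to $w$. A minimum taken over a subset can only be larger, so the inequality is immediate once we check that the relevant sets are nonempty and the minima are attained.

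More precisely, I would first dispose of the diagonal case: if $v=w$ then both sides equal $0$ by definition, so assume $v\ne w$. Set
\[
\mathcal{P}_{vw}=\{\, p : p \text{ is a path from } v \text{ to } w \,\},\qquad
\mathcal{P}^{\min}_{vw}=\{\, p\in\mathcal{P}_{vw} : |E(p)| = m_{vw}\,\}.
\]
Since $G$ is connected, $\mathcal{P}_{vw}\neq\varnothing$, so $m_{vw}$ is well defined and $\mathcal{P}^{\min}_{vw}\neq\varnothing$; moreover $\mathcal{P}^{\min}_{vw}\subseteq\mathcal{P}_{vw}$ directly from the definitions. Because $W_E$ takes positive values, for the purpose of minimizing total weight it suffices to restrict to simple paths, of which there are only finitely many, so the minima appearing in \Cref{def: distance_weight} and \Cref{def: distance_edge} are both attained.

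The remaining step is the standard "minimum over a subset" argument: letting $q\in\mathcal{P}^{\min}_{vw}$ be a path attaining $d_{\text{edge}}(v,w)$, the path $q$ also lies in $\mathcal{P}_{vw}$, hence
\[
d_{\text{weight}}(v,w) \;=\; \min_{p\in\mathcal{P}_{vw}} \sum_{e\in E(p)} W_E(e) \;\le\; \sum_{e\in E(q)} W_E(e) \;=\; d_{\text{edge}}(v,w).
\]
I do not anticipate any real obstacle here; the only points requiring a line of care are the inclusion $\mathcal{P}^{\min}_{vw}\subseteq\mathcal{P}_{vw}$ (immediate) and its nonemptiness (immediate from connectedness). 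This proposition mainly serves as a sanity check and as motivation for the more substantial barcode-inclusion results developed later.
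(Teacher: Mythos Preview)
Your proof is correct and follows essentially the same approach as the paper: both arguments note that $\mathcal{P}^{\min}_{vw}\subseteq\mathcal{P}_{vw}$ and conclude by the standard fact that a minimum over a subset is at least the minimum over the full set. You add a few extra lines of care (the diagonal case, nonemptiness from connectedness, attainment of the minima via finiteness of simple paths) that the paper leaves implicit, but the substance is identical.
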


\begin{proof}
    Take any pair of two distinct vertices $v$ and $w$ in $V$. 
    Let $\mathcal{P}(v,w)$ be the set of all paths between $v$ and $w$. Define $\mathcal{P}_{min}(v,w)$ as the subset of $\mathcal{P}(v,w)$ containing all paths with the minimum number of edges, i.e.,
    \(
    \mathcal{P}_{min}(v,w) = \left\{ p \in \mathcal{P}(v,w) \mid |E(p)| = m_{vw} \right\},
    \)
    where $m_{vw}$ represents again the minimum number of edges among all paths in $\mathcal{P}(v,w)$.
    Then, by definition, we have 
    \[
    d_{\text{weight}}(v,w) 
    = \min_{p\in \mathcal{P}(v,w)} \left\{ \sum_{e \in E(p)} W_E(e) \right\} \
    \le 
    \min_{p\in \mathcal{P}_{min}(v,w)} \left\{ \sum_{e \in E(p)} W_E(e) \right\} = d_{\text{edge}}(v,w).
    \]
\end{proof}

\subsection{Persistent homology}\label{subsec:Persistent homology}

Consider a connected weighted graph $G=(V,E,W_E)$ with its distance $d:V \times V \rightarrow \mathbb{R}^{\ge 0}$.
Define an abstract simplicial complex $\mathbb{X}$ as the power set of the vertex set $V$, denoted by $\mathbb{X} \coloneqq \mathcal{P}(V)$.
Consider a real-valued function $f: \mathbb{X} \rightarrow \mathbb{R}$ defined as
\[
f(\sigma) = \max \{ d(v,w) \mid (v,w) \subseteq \sigma \text{ for } v,w \in V\}
\]
for any $k$-simplex $\sigma \in \mathbb{X}$ whenever $k \ge 1$, otherwise $f(\sigma) = 0$.
For a sequence of real numbers $0 = a_0 \le a_1 \le \ldots \le a_m$, if we denote $\mathbb{X}_{a_l} \coloneqq f^{-1}((-\infty,a_l])$, then we have a filtration 
\[
 V = \mathbb{X}_{a_0} \hookrightarrow \mathbb{X}_{a_1} \hookrightarrow \cdots \hookrightarrow \mathbb{X}_{a_m}= \mathbb{X}.
\]
Note that $\mathbb{X}_{a_0} = V$ and $\mathbb{X}_{a_m}= \mathcal{P}(V)$.
This filtration is called the Vietoris--Rips (Rips) filtration.
The filtration induced by $f$ allows us to study topological changes in the simplicial complex $\mathbb{X}$ as the parameter $a_l$ varies. 
As the threshold $a_l$ increases, more simplices are included in the filtration, revealing more complex interactions among the vertices based on their distances. 
This process helps to understand the shape of the data represented by the graph $G$.

Persistent homology captures and quantifies these topological changes throughout different scales. For each $k$-dimensional simplex, we track when it appears and becomes part of a larger homological feature as $a_l$ increases. 
This is formally achieved by computing the homology groups $H_k(\mathbb{X}_{a_l})$ at each stage of the filtration, using homological algebra over the coefficient field $\mathbb{F}$. 
Persistent homology tracks the birth and death of homological features such as connected components ($k=0$), loops ($k=1$), and voids ($k=2$) as they appear and disappear through the filtration.

A persistence barcode $\textbf{bcd}_k(d)$, for each homological dimension $k$, is a visual representation of homological features as a multiset. 
They can be represented as follows:
\[
\textbf{bcd}_k(d) = \{[\beta, \delta] \mid \beta, \delta \in \mathbb{R}, \beta < \delta\}.
\]
Each barcode consists of intervals, where the beginning of an interval corresponds to a \textit{birth} (created when a feature appears in the filtration) and the end corresponds to a \textit{death} (created when a feature is no longer visible in the filtration as a distinct entity). 
These intervals provide a powerful summary of the topological features of the data across multiple scales, encapsulating both the geometry and topology of the underlying graph structure.
By analyzing these barcodes, we can identify significant topological features that persist over a wide range of scales, which correspond to important structural properties of the data. 
This methodology of tracking and summarizing the change of topological features via persistent homology provides insights into the complex structures hidden in high-dimensional data.
In this paper from now on, we fix the coefficient field $\mathbb{F}$ to be $\mathbb{Z}/2\mathbb{Z}$.

In~\cite{cohen2006vines}, an algorithm for computing persistence barcodes was presented.
Let $\mathbf{D}$ be the boundary matrix.
Each row and column of the matrix $\mathbf{D}$ corresponds to a simplex,
and the matrix $(\mathbf{D}_{i,j})$ is defined as 
$\mathbf{D}_{i,j} =\begin{cases}
 1 & \text{ if } \sigma_i \in \partial\sigma_j \\
 0 & \text{ otherwise}
\end{cases}$.
The algorithm performs the column operations known as reduction to obtain the reduced binary matrix $\mathbf{R}$, starting from $\mathbf{D}$.
Let $\text{low}_{\mathbf{R}}(j)$ represent the row index of the last nonzero element in the $j$th column of $\mathbf{R}$, 
or be undefined if the $j$th column consists solely of zeros. 
The column operation algorithm proceeds as follows: 

\begin{algorithm}
\caption{$\mathbf{R}=\mathbf{D}\mathbf{V}$ decomposition}
\label{alg:R=DV}
\begin{algorithmic}[1]
\STATE{Define $\mathbf{R} \coloneqq \mathbf{D} \coloneqq \text{the boundary matrix}$}
\FOR{$j = 1$ to $n$}
    \WHILE{there exists $j' < j$ such that $\text{low}_{\mathbf{R}}(j') = \text{low}_{\mathbf{R}}(j)$}
        \STATE{Update $\mathbf{R}[:, j] \coloneqq \mathbf{R}[:, j] - \mathbf{R}[:, j']$}
    \ENDWHILE
\ENDFOR
\RETURN $\mathbf{R}$
\end{algorithmic}
\end{algorithm}
The resulting matrix $\mathbf{R}$ is called the reduced matrix. 
Since the algorithm only uses the column operation from left to right, it is equivalent to the decomposition $\mathbf{R}=\mathbf{D}\mathbf{V}$ with an upper triangular matrix $\mathbf{V}$.
The important things to know here are that 
\begin{enumerate}
  \item When the $i$ th column of $\mathbf{R}$ corresponding to the $k$-simplex $\sigma_i$ is zero, there is a cycle $\tau$  containing $\sigma_i$ in $\mathbb{X}_b $, where $b$ is the filtration value of $\sigma_i$.
  \item If there is a $(k+1)$-simplex $\sigma_j$ such that $i = \text{low}_{\mathbf{R}}(j)$, then $\sigma_j$ causes the cycle $\tau$ to die, where the death value is the value of $\sigma_j$.
  If there is no such $j$, the cycle $\tau$ will persist infinitely.
  \item Suppose that the $i$ th column of $\mathbf{R}$ corresponding to the $k$-simplex $\sigma_i$ is zero.
  Let $f$ be the real-valued function induced by the distance $d$ as described in~\cref{subsec:Persistent homology}.
  If there is a $(k+1)$-simplex $\sigma_j$ such that $i = \text{low}_{\mathbf{R}}(j)$ with $f(\sigma_i) < f(\sigma_j)$,
  then $[f(\sigma_i), f(\sigma_j)]$ is in $\textbf{bcd}_k(d)$.
  Also, if there is no $\sigma_j$ such that $i = \text{low}_{\mathbf{R}}(j)$,
  then $[f(\sigma_i), \infty ]$ is an element of $\textbf{bcd}_k(d)$.
\end{enumerate}

Therefore, the $k$-dimensional persistence barcode $\textbf{bcd}_k(d)$ is a multi-set 
\[\{ [f(\sigma_i), f(\sigma_j)]  \mid \sigma_i \text{ is a $k$-simplex with }i = \text{low}_{\mathbf{R}}(j), f(\sigma_i) < f(\sigma_j) \}\]
\[
\bigcup \{ [f(\sigma_i), \infty]  \mid \sigma_i \text{ is a $k$-simplex, there is no $\sigma_j$ such that } i = \text{low}_{\mathbf{R}}(j)\}.
\]

When computing the persistence barcode of $\mathbb{X}$, the complexes $\mathbb{X}_{a_i}$ are constructed based on the filtration value $a_i$. Therefore, simplices with the same value $a_i$ appear simultaneously in $\mathbb{X}_{a_i}$, yet their order in the column of the boundary matrix $\mathbf{D}$ should differ. However, this order does not affect the computation of the persistence barcode. This property is known as \textit{permutation invariance}.

Consider the ordering function $\pi$ on $\mathbb{X}$ that determines the order of simplices with the same value for each $\mathbb{X}_{a_i}$. 
We call this function the \textit{simplex ordering on} $\mathbb{X}$ from the function $f$. 
Let $\mathbf{D}_{\pi}$ be the boundary matrix with $\pi$. Then $\mathbf{D}_{\pi}$ is uniquely determined if we fix a simplex ordering $\pi$ on $\mathbb{X}$.
Also, if $\mathbf{D}_{\pi}$ is uniquely determined, then 
every $[f(\sigma_i), f(\sigma_j)]$ in $\textbf{bcd}_k(d)$ is uniquely determined by $\sigma_i$.
We obtain the set version of the persistence barcode as
\[
\textbf{bcd}^{\pi}_k(d) = \{[f(\sigma_i), f(\sigma_j)]_{\sigma_i} \mid [f(\sigma_i), f(\sigma_j)] \in \textbf{bcd}_k(d) \} ,
\]
where $\textbf{bcd}^{\pi}_k(d)$ denotes the barcode with the ordering function $\pi$.
For each element $[f(\sigma_i), f(\sigma_j)]_{\sigma_i}$ of $\textbf{bcd}^{\pi}_k(d)$, $\sigma_i$ is called the \textit{birth $k$-simplex} of $[f(\sigma_i), f(\sigma_j)]_{\sigma_i}$.

\subsection{Interrelations between two persistence barcodes}\label{subsec:Interrelations between two persistence barcodes}

Given the connected weighted graph $G=(V, E, W_E)$, the distances $d_{\text{weight}}$ and $d_{\text{edge}}$ introduced in~\cref{subsec:Distances defined across different domains} are generally distinct. 
Although \(d_{\text{weight}}\) is a metric and \(d_{\text{edge}}\) is a semimetric, we found that there is an interesting relation between the $1$-dimensional persistence barcodes by these distances as described in~\cref{Thm: main theorem}.

\begin{theorem}\label{Thm: main theorem}
Let $G=(V,E,W_E)$ be a connected weighted graph. 
Then there exists an injective function $\varphi : \textbf{bcd}_1(d_{\text{weight}}) \rightarrow \textbf{bcd}_1(d_{\text{edge}})$ defined as $\varphi([\beta,\delta])=[\beta, \delta ']$ such that $\delta \le \delta '$ for any $[\beta,\delta] \in \textbf{bcd}_1(d_{\text{weight}})$.   
\end{theorem}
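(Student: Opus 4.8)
The plan is to exploit the algorithmic description of barcodes from \Cref{subsec:Persistent homology}: a bar in $\textbf{bcd}_1$ is determined by a birth $1$-simplex $\sigma_i$ (an edge of the complex $\mathbb{X}$ whose reduced column is zero) paired with a death $2$-simplex $\sigma_j$ (a triangle with $\text{low}_{\mathbf{R}}(j)=i$), and the birth value $a=f(\sigma_i)$ equals the $d$-distance of the endpoints of that edge. The key structural fact I would establish first is a comparison of the two filtrations. By \Cref{prop:d_weight<=d_edge} we have $d_{\text{weight}}\le d_{\text{edge}}$ pointwise, hence for the induced functions $f_{\text{weight}}(\sigma)\le f_{\text{edge}}(\sigma)$ on every simplex $\sigma\in\mathbb{X}=\mathcal{P}(V)$. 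Thus the $d_{\text{edge}}$-filtration is obtained from the $d_{\text{weight}}$-filtration by ``delaying'' each simplex, and there is an inclusion of filtered complexes $\mathbb{X}^{\text{edge}}_a \hookrightarrow \mathbb{X}^{\text{weight}}_a$ for all $a$ (since $f_{\text{edge}}(\sigma)\le a \Rightarrow f_{\text{weight}}(\sigma)\le a$). Note both complexes have the same underlying set $\mathcal{P}(V)$, so this is purely a reordering/delaying of the filtration values.

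Next I would set up the bookkeeping so that the two barcodes are compared via \emph{the same birth edge}. Fix a simplex ordering $\pi$ compatible with \emph{both} $f_{\text{weight}}$ and $f_{\text{edge}}$ — this is possible because, as I would check, refining a common linear extension works, and permutation invariance guarantees the resulting barcodes are the ones we want. The map $\varphi$ will send a bar of $\textbf{bcd}_1(d_{\text{weight}})$ with birth edge $e=\{v,w\}$ to a bar of $\textbf{bcd}_1(d_{\text{edge}})$ with the \emph{same} birth edge $e$; since $f_{\text{weight}}(e)=d_{\text{weight}}(v,w)$ and $f_{\text{edge}}(e)=d_{\text{edge}}(v,w)$, the births satisfy $a_{\text{weight}}\le a_{\text{edge}}$ — but the theorem asks for equal left endpoints $a$, so I will need to argue more carefully: actually the right statement is that the birth of a $1$-cycle is governed by when the $1$-chain first becomes a cycle, and a subtlety is whether the edge $e$ that is the birth simplex for $d_{\text{weight}}$ remains a valid birth simplex for $d_{\text{edge}}$. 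I would handle this by noting that in dimension $1$, every edge of $\mathcal{P}(V)$ has a zero column after reduction of the boundary map $\partial_1$ unless it is paired as a death for an $H_0$ class; the $H_0$ pairing depends only on the relative order of edges, and I would show the set of ``positive'' (cycle-creating) $1$-simplices is controlled. The cleanest route: use the standard fact that for $1$-dimensional persistence of a Rips-type filtration, $\dim H_1(\mathbb{X}_a) = |E(\mathbb{X}_a)| - |V| + (\text{number of connected components of the } 1\text{-skeleton})$, i.e. the first Betti number of the $1$-skeleton graph, and persistence pairing in degree $1$ is the ``standard'' pairing of edges with triangles.

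The heart of the argument — and the step I expect to be the main obstacle — is constructing the injection at the level of death times while keeping births matched. I would proceed by an explicit matching on reduced columns. Process the $2$-simplices in $\pi$-order and maintain the reduced boundary matrix for both filtrations simultaneously; the claim is that whenever a triangle $\tau$ kills a $1$-cycle born at edge $e$ in the $d_{\text{weight}}$-filtration at time $f_{\text{weight}}(\tau)$, then in the $d_{\text{edge}}$-filtration the ``same'' edge $e$ is also a birth edge (its column in $\partial_1$ reduces to zero with the same low-row structure, because the kernel of $\partial_1$ over $\mathbb{Z}/2$ is intrinsic and independent of the filtration), and it is killed by \emph{some} triangle $\tau'$ with $f_{\text{edge}}(\tau')\ge f_{\text{weight}}(\tau)$. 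The inequality on deaths follows from $f_{\text{weight}}\le f_{\text{edge}}$ together with a matrix-reduction monotonicity lemma: delaying simplices can only postpone (or leave unchanged) the death index in the persistence pairing, when births are held fixed. Formally I would invoke or reprove the interleaving/monotonicity principle — if $g\le h$ are two filtration functions on the same complex agreeing on the ordering, then there is an injection $\textbf{bcd}_1(g)\to\textbf{bcd}_1(h)$ preserving (or increasing) both endpoints, obtained columnwise. The delicate point is that ``births held fixed'' is genuine here because birth values of $1$-cycles can themselves change ($a_{\text{weight}}$ vs $a_{\text{edge}}$); I resolve this by observing the theorem only requires $b\le c$ with the \emph{same} $a$, so I must show that among bars of $\textbf{bcd}_1(d_{\text{weight}})$ with left endpoint $a$ and bars of $\textbf{bcd}_1(d_{\text{edge}})$ with left endpoint $a$, there is an injection with $b\le c$ — and this I would get by partitioning both barcodes according to birth edge and applying the columnwise matching within each birth-value class, using that the number of independent $1$-cycles created at a given filtration threshold is the same invariant (rank jump of $H_1$) that can only be realized later or equally in the delayed filtration. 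Closing the argument amounts to checking that the per-threshold matchings assemble into a single global injection, which follows from disjointness of the birth-edge classes.
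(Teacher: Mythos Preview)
Your proposal has a genuine gap at the very point you yourself flag: the equality of left endpoints. You correctly note that matching bars by birth edge gives $a_{\text{weight}}=d_{\text{weight}}(v,w)\le d_{\text{edge}}(v,w)=a_{\text{edge}}$, and then try to ``resolve this by observing the theorem only requires $b\le c$ with the same $a$'' and partitioning by birth-value class. But that is circular: if the edge $e$ that serves as birth simplex for $d_{\text{weight}}$ has $d_{\text{edge}}(v,w)>d_{\text{weight}}(v,w)$, then the bar in $\textbf{bcd}_1(d_{\text{edge}})$ attached to $e$ (if there is one) lands in a \emph{different} birth-value class, and nothing in your argument produces a bar with the correct left endpoint $a$. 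The paper's missing ingredient is \Cref{prop1} together with \Cref{prop:birth_edge_weigh_fix}: any birth $1$-simplex $e=(v,w)$ of a path-representable distance is an actual edge of $G$, and moreover $d(v,w)=W_E(e)$. This uses the path structure (via \Cref{lemma1}) in an essential way and is what forces $a_{\text{weight}}=a_{\text{edge}}=W_E(e)$; without it the equal-birth claim simply does not follow from $d_{\text{weight}}\le d_{\text{edge}}$.

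Two further points where your outline breaks down. First, the claim that one can ``fix a simplex ordering $\pi$ compatible with both $f_{\text{weight}}$ and $f_{\text{edge}}$'' by taking a common linear extension is false in general: if $f_{\text{weight}}(\sigma_1)<f_{\text{weight}}(\sigma_2)$ while $f_{\text{edge}}(\sigma_1)>f_{\text{edge}}(\sigma_2)$ there is no such $\pi$. The paper does not attempt this; instead it shows (\Cref{prop:MST_same}, \Cref{prop:MST_standard_algorithm_same}) that the minimum spanning trees of the two graph completions coincide with $\text{MST}(G)$, so one can choose $\pi$ making the sets of $H_0$-death edges (hence the sets of positive $1$-simplices) literally equal. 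That is the substitute for your ``common ordering,'' and it requires the weight-dominated hypothesis. Second, the ``matrix-reduction monotonicity lemma'' you invoke---that delaying simplices can only postpone deaths when births are held fixed---is not available as a general principle; indeed \Cref{fig:counter_example_embedding} in the paper exhibits $d_1\le d_2$ with $\textbf{bcd}_1(d_1)\not\subseteq\textbf{bcd}_1(d_2)$, so any such lemma must use more than the pointwise inequality. The paper's death comparison instead goes through the inclusion $\mathbb{X}^1_t\subseteq\mathbb{X}^2_t$ applied only after the birth edges have already been identified and aligned via the MST/weight arguments.
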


\begin{figure}[!ht]
    \centering
    \includegraphics[width=0.9\textwidth]{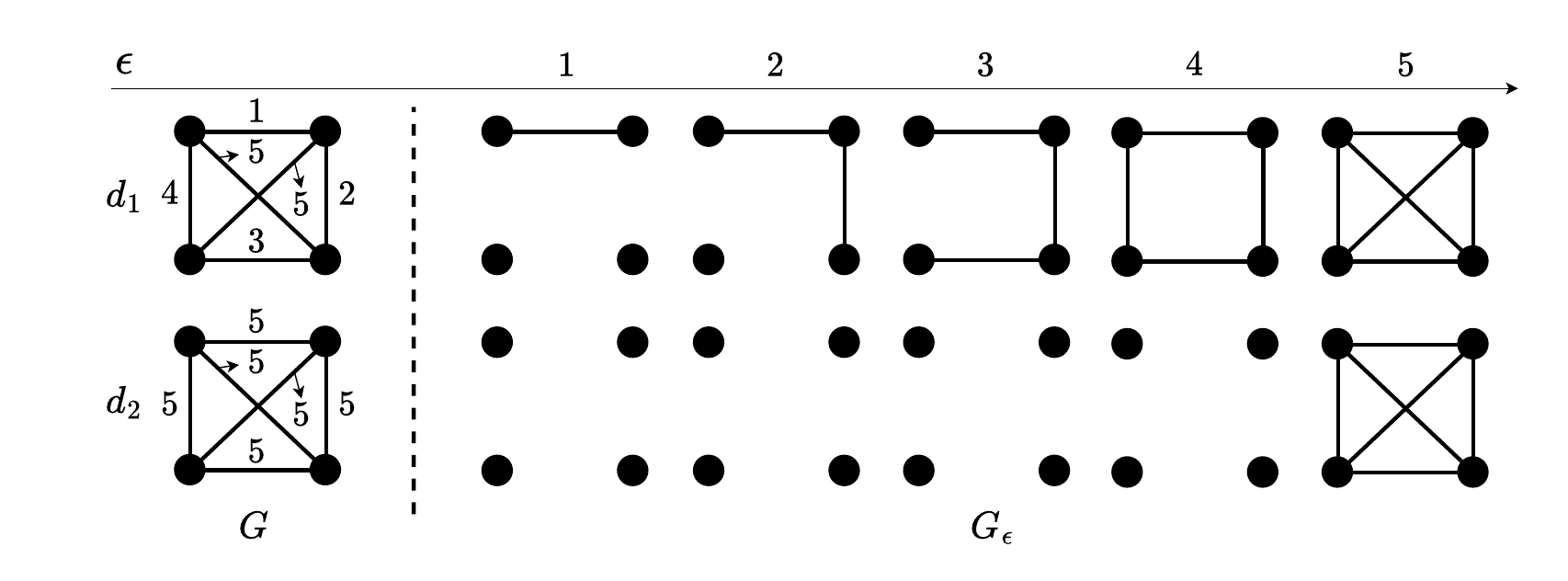} 
    \caption{An example where \(d_1 \leq d_2\) yet \(\textbf{bcd}_1(d_1) \not\subseteq \textbf{bcd}_1(d_2)\), with \(\textbf{bcd}_1(d_1) =\{ [4,5] \}\) and \(\textbf{bcd}_1(d_2) =\emptyset \).
    The subgraph \( G_{\epsilon} \) of \( G \) is defined as \( G_{\epsilon} = (V, E_{\epsilon}) \), where \( E_{\epsilon} = \{ e \in E \mid W_E(e) \leq \epsilon \} \).
    }
    \label{fig:counter_example_embedding}
\end{figure}

As illustrated in~\cref{fig:counter_example_embedding}, the presence of an inequality between the two arbitrary distances does not guarantee an injection between their $1$-dimensional persistence barcodes in general. However, \cref{Thm: main theorem} shows the existence of a pair of distance definitions that could provide an injective relationship in the $1$-dimensional barcode. For example, a possible pair would be $d_{\text{edge}}$ and $d_{\text{weight}}.$

\cref{Thm: main theorem} implies that \(\textbf{bcd}_1(d_{\text{edge}})\) always contains more elements of persistence barcodes than \(\textbf{bcd}_1(d_{\text{weight}})\). 
From this perspective, examining \(d_{\text{edge}}\) provides access to information that cannot be obtained through \(d_{\text{weight}}\) alone. 
Furthermore, analyzing the significance of the remaining elements of \(\textbf{bcd}_1(d_{\text{edge}})\) that are not matched with the injective function \(\varphi\) in~\cref{Thm: main theorem} needs further investigation.

\begin{figure}[!ht]
    \centering
    \includegraphics[width=0.9\textwidth]{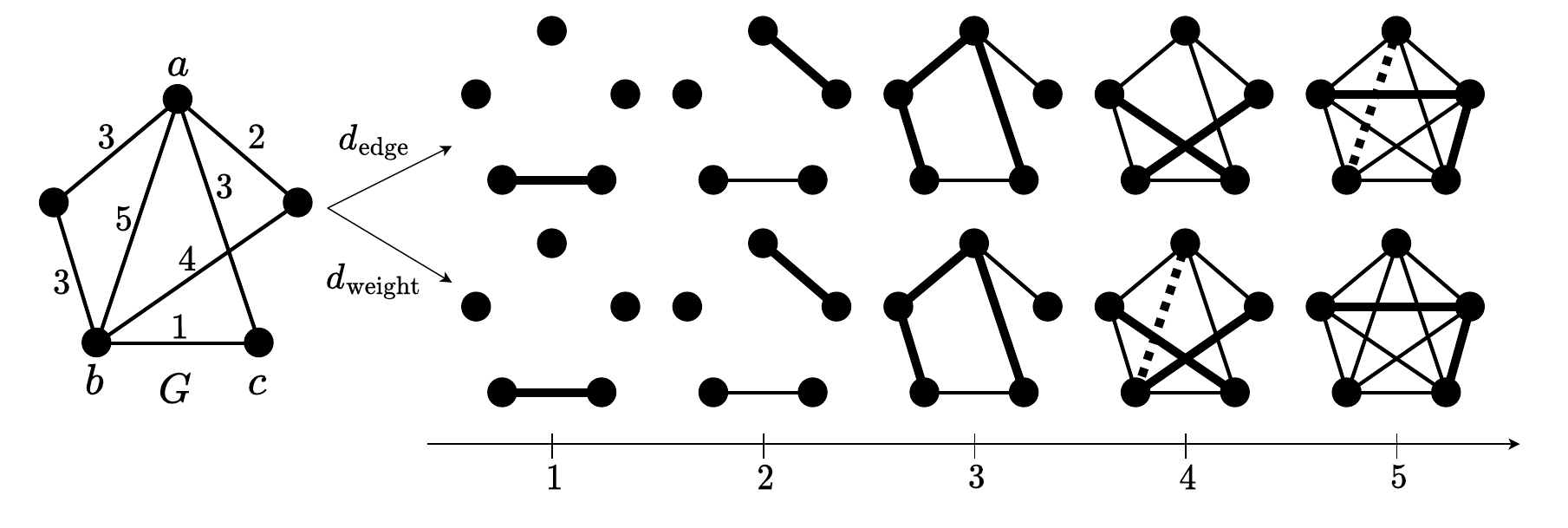} 
    \caption{
    An example where $\textbf{bcd}_1(d_{\text{edge}})$ and $\textbf{bcd}_1(d_{\text{weight}})$ differ yet they form a hierarchical structure within $1$-dimensional hole structures. Bold lines represent newly formed edges, while dashed lines indicate edges that appear at different values.
    }
    \label{fig:Toy_main_example}
\end{figure}

\cref{fig:Toy_main_example} shows the order in which edges appear in a Rips filtration, based on $d_{\text{edge}}$ and $d_{\text{weight}}$. When calculating $d_{\text{edge}}$, the direct path from \(a\) to \(b\) yields a value of $5$, but for $d_{\text{weight}}$, the path from \(a\) to \(b\) via \(c\) results in a value of $4$.
Due to this difference, $\textbf{bcd}_1(d_{\text{edge}}) = \{ [3,4] , [4,5] \}$ and $\textbf{bcd}_1(d_{\text{weight}}) = \{ [3,4] \}$ differ and the injection $\varphi : \textbf{bcd}_1(d_{\text{weight}}) \rightarrow \textbf{bcd}_1(d_{\text{edge}})$ from~\cref{Thm: main theorem} satisfies $\varphi([3,4]) = [3,4]$.

\section{Main theory}\label{sec:Main theory}

In this section, we prove~\cref{Thm: main theorem} by defining more general distance notions and also prove the theorem that is more general than~\cref{Thm: main theorem}. In particular, the distances \(d_{\text{weight}}\) and \(d_{\text{edge}}\) introduced in~\cref{subsec:Distances defined across different domains} are special cases of a more general distance definition, termed as the \textit{path-representable distance}, which we will define and explore in this section.

\subsection{Path-representable distance}\label{sec:Path-representable distance}

Let $G=(V, E, W_E)$ be a connected weighted graph. 
For any two vertices $v$, $w$ in $V$, $P(v,w)$ is the set of paths from $v$ to $w$ that have no repeated vertex, and $\mathcal{P}$ is the union of $P(v,w)$ along all $v,w \in V$. 
That is, 
\[
\mathcal{P} = \bigcup_{v,w \in V} P(v,w) 
\]
A \textit{path choice function} is a function $g: V \times V \rightarrow \mathcal{P}$ satisfying $g(a,b) \subseteq g(v,w) \in P(v,w)$ for any vertices $v$ and $w$ in $G$ and any vertices $a$ and $b$ in $g(v,w)$. This property is called \textit{consistency}.

\begin{definition}
    Consider a connected weighted graph $G=(V, E, W_E)$. 
    A function $d$ on $V \times V$ is called a \textit{path-representable distance} if there is a path choice function $g$ such that 
    \[
    d(v,w) = \sum_{e \in g(v,w)} W_E(e) \text{ if } v \neq w
    \]
    and $d(v,w) = 0$ if $v=w$.
\end{definition}

In fact, the consistency property is natural for the definition of the path choice function.
The consistency property is equivalent to stating that the chosen path must have \textit{an optimal substructure}.
In computer science, a problem is said to have an optimal substructure if its solution can be constructed by combining solutions to smaller subproblems.
For example, in dynamic programming, it is crucial to determine whether or not a given problem possesses an optimal substructure. This characteristic allows a large problem to be divided into smaller sub-problems, each with an optimal solution, which can then be combined to construct the optimal solution for the entire problem, thereby enabling efficient problem resolution.

Consider a connected weighted graph $G=(V,E,W_E)$ and its path choice function $g$ on $G$.
For an edge $e=(v,w) \in E$, we say that $g$ is \textit{locally weight-dominated} by $e$ if $W_E(e) >  W_E(e')$ for any $e' \in g(v,w)$. 
In addition, we say that $g$ is \textit{locally cost-dominated} by $e$ if $W_E(e) \ge \sum_{e' \in g(v,w)} W_E(e')$.
Note that if $g$ is locally cost-dominated by $e$, then $g$ is locally weight-dominated by $e$.

We define that $g$ is \textit{weight-dominated} if $g$ is locally weight-dominated by any $e=(v,w) \in E$ whenever $g(v,w) \neq e$ and $g$ is \textit{cost-dominated} if $g$ is locally cost-dominated by any $e=(v,w) \in E$ whenever $g(v,w) \neq e$.
These conditions mean that there must be a reason to choose a path $g(v,w)$ instead of the single-edge path $e$ in $G$.

\begin{definition}[Dominated distance]
    Consider a connected weighted graph $G=(V, E, W_E)$ and its path-representable distance $d$ represented by the path choice function $g$ on $G$.
    We refer to $d$ as \textit{weight-dominated} if $g$ is weight-dominated and as \textit{cost-dominated} if $g$ is cost-dominated.
\end{definition}

\begin{proposition}\label{dweight_dedge_elements}
    The two distances $d_{\text{weight}}$ and $d_{\text{edge}}$ defined in~\cref{def: distance_weight,def: distance_edge} are cost-dominated distances.
\end{proposition}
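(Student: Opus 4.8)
The plan is to exhibit, for each of $d_{\text{weight}}$ and $d_{\text{edge}}$, an explicit path choice function $g$ realizing it, verify the consistency (optimal substructure) property, and then check local cost-domination. For $d_{\text{weight}}$, I would define $g_{\text{weight}}(v,w)$ to be a shortest path (minimum total weight, no repeated vertices) from $v$ to $w$, chosen by a fixed tie-breaking rule so that the function is well-defined. For $d_{\text{edge}}$, I would define $g_{\text{edge}}(v,w)$ to be a path of minimum edge count $m_{vw}$ that among such paths has minimum total weight, again with a fixed tie-break. In both cases $g(v,w) \in P(v,w)$ by construction, since a minimum-weight walk can always be taken without repeated vertices (weights are positive) and a minimum-edge-count path has no repeated vertices automatically.

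Next I would verify consistency: if $a,b$ are vertices on $g(v,w)$, then $g(a,b)$ must equal the sub-path of $g(v,w)$ between $a$ and $b$. For $g_{\text{weight}}$ this is the standard subpath-optimality of shortest paths — any cheaper $a$–$b$ path could be spliced in to produce a cheaper $v$–$w$ path, contradicting optimality; the tie-breaking rule must be chosen compatibly (e.g. a lexicographically-least shortest path, so that restriction commutes with the choice). For $g_{\text{edge}}$ I would argue in two stages: first, the sub-path of a geodesic-in-edge-count path is itself of minimum edge count between its endpoints (else splice to shorten the whole path), so $|E(\text{subpath})| = m_{ab}$; second, among all such edge-minimal $a$–$b$ paths the sub-path has minimum weight, again by a splicing argument against the minimality of the total weight of $g_{\text{edge}}(v,w)$. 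With a coherent tie-break this gives the required identity $g(a,b) = $ the sub-path, hence consistency.

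Finally, cost-domination: for any edge $e=(v,w)\in E$ with $g(v,w)\neq e$, I must show $W_E(e) \ge \sum_{e'\in g(v,w)} W_E(e')$. For $d_{\text{weight}}$ this is immediate: the single-edge path $e$ is itself a $v$–$w$ path, so its weight $W_E(e)$ is at least the minimum total weight $d_{\text{weight}}(v,w)=\sum_{e'\in g_{\text{weight}}(v,w)}W_E(e')$. For $d_{\text{edge}}$, observe that the single edge $e$ is a path with one edge, so $m_{vw}=1$ and $g_{\text{edge}}(v,w)$ is then the minimum-weight single-edge path between $v$ and $w$; hence $W_E(e)\ge \sum_{e'\in g_{\text{edge}}(v,w)}W_E(e') = W_E(g_{\text{edge}}(v,w))$, which is exactly local cost-domination (in fact $g_{\text{edge}}(v,w)$ is a single edge here). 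So both distances are cost-dominated.

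I expect the only real subtlety to be the bookkeeping around tie-breaking: consistency must hold for the \emph{chosen} representatives, not merely for \emph{some} optimal path, so the tie-break rule has to be specified so that restriction to sub-paths commutes with the rule. Using a fixed total order on $V$ and taking lexicographically-minimal optimal paths handles this cleanly, and I would state that choice explicitly before running the splicing arguments. Everything else is routine application of subpath optimality and the fact that a single edge is always an admissible competitor path.
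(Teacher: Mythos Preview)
Your splicing arguments for subpath optimality and your cost-domination checks are correct, and the overall plan---choose an optimal representative path for each pair, verify consistency, verify cost-domination---matches the paper's. The gap is in the tie-breaking step. You assert that taking the lexicographically minimal optimal path (with respect to a fixed total order on $V$) makes restriction to subpaths commute with the choice, but this is false: vertex-sequence lexicographic order is directional, and the subpath between $a$ and $b$ on $g(v,w)$ may be traversed in the ``wrong'' direction relative to the lex comparison used to define $g(a,b)$. Concretely, take $V=\{1,2,3,4,5\}$ with edges $(1,4),(2,5),(2,3),(3,4)$ of weight $1$ and $(2,4)$ of weight $2$. The shortest $1$--$5$ paths are $1,4,2,5$ and $1,4,3,2,5$ (both of weight $4$); reading from the smaller endpoint, lex selects $g(1,5)=1,4,2,5$. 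The shortest $2$--$4$ paths are $2,4$ and $2,3,4$ (both of weight $2$); lex from $2$ selects $g(2,4)=2,3,4$. But the $2$--$4$ subpath of $g(1,5)$ is the single edge $(2,4)$, so $g(2,4)\not\subseteq g(1,5)$ and consistency fails. (Reading lex from the first argument rather than the smaller endpoint does not help either: then $g$ is not even symmetric, which consistency forces.)

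The paper sidesteps this by not fixing a tie-breaking rule at all: it picks arbitrary optimal paths $p_{i,j}$ and then runs an iterative ``$(*)$-replacement'' process, rewriting overlapping segments so that any two chosen paths from a common source either separate immediately at that source or share an initial segment and never meet again; iterating over all sources forces the consistency condition by construction. If you prefer a tie-break, one that \emph{does} work is perturbation of edge weights (equivalently, lex on the edge set rather than on the vertex sequence): replace $W_E(e_i)$ by $W_E(e_i)+\epsilon_i$ with generic infinitesimals $\epsilon_i$, so that shortest paths become unique; uniqueness then yields consistency automatically, since a subpath of the unique shortest path is itself the unique shortest path between its endpoints. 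With that correction the rest of your argument (including the analogous construction for $d_{\text{edge}}$ and both cost-domination checks) goes through unchanged.
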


\begin{proof}
(i) First, we show that \( d_{\text{weight}} \) and \( d_{\text{edge}} \) are path-representable distances.
Consider \( d_{\text{weight}} \).
Let \( V = \{ v_1, v_2, \ldots, v_n \} \). 
Denote \( p_{1,k} \) by the minimal path from \( v_1 \) to \( v_k \) such that \(d_{\text{weight}}(v_1, v_k) = \sum_{e \in E(p_{1,k})} W_E(e)\)  
for \( k = 2,3, \ldots , n \).
Consider \( p_{1,2} \) and \( p_{1,3} \).
Suppose \( p_{1,2} \) and \( p_{1,3} \) meet at the midpoint \( z \neq v_1 \).
Then, denote the subpath from \( v_1 \) along \( p_{1,2} \) to \( z \) by \( p_{1,2}|_{z} \), and the subpath from \( v_1 \) along \( p_{1,3} \) to \( z \) by \( p_{1,3}|_{z} \).
By the minimality of \( p_{1,2} \) and \( p_{1,3} \), 
\(\sum_{e \in E(p_{1,2}|_{z} )} W_E(e) = \sum_{e \in E(p_{1,3}|_{z} )} W_E(e)\).
This is because if one side were smaller, we could find a shorter path by replacing the other.
Now, redefine \( p_{1,3} \) as a new path starting at \( v_1 \), following \( p_{1,2}|_{z} \) to \( z \), and then from \( z \) to \( v_3 \) along the path of \( p_{1,3} \).
Repeat this process until the midpoint where \( p_{1,2} \) and \( p_{1,3} \) meet disappears.

This process ensures one of the following:
(a) \( p_{1,2} \) and \( p_{1,3} \) meet only at \( v_1 \) or not at all, or
(b) they start from \( v_1 \), overlap only on a common subpath, and do not meet anywhere else until the end. We call the above replacement process a \((*)\)-replacement.

Next, perform the \((*)\)-replacement for \( p_{1,4} \) with respect to \( \{ p_{1,2}, p_{1,3} \} \),
and repeat this until \( p_{1,n} \) is \((*)\)-replaced with respect to \( \{ p_{1,2}, p_{1,3}, \ldots, p_{1,n-1} \} \).
Then, take \( p_{2,1} \), reverse its direction to be the same as \( p_{1,2} \),
and \((*)\)-replace \( p_{2,3} \) with respect to \( p_{2,1} \).
Continue to \( p_{2,4} \) with respect to \( \{ p_{2,1}, p_{2,3} \} \),
and repeat this process until \( p_{2,n} \) is \((*)\)-replaced with respect to \( \{ p_{2,1}, p_{2,3}, \ldots, p_{2,n-1} \} \).
By continuing this process, we ensure that for any distinct \( i\) and \(j\), 
the consistency property for \( p_{i,j} \) is satisfied.
Finally, define the path choice function \( g \) as \( g(v_i, v_j) = p_{i,j} \), 
thus proving that \(d_{\text{weight}}(v_i, v_j) = \sum_{e \in g(v_i, v_j)} W_E(e)\) .

The \((*)\)-replacement process can also be applied in the case of \(d_{\text{edge}}\). 
If there is a midpoint \(z\), both the number of edges in the two different paths with the same starting points and the sum of the weights up to the midpoint must be the same. Thus the replacement can be done similarly. 
Therefore, \(d_{\text{edge}}\) becomes the path-representable distance by undergoing the similar procedure.

(ii) Second, we demonstrate that the path choice function \(g\) of \(d_{\text{weight}}\) in (i) is cost-dominated. 
Consider any edge \(e = (v, w)\) with \(g(v, w) \neq e\).
Since the edge \(e\) itself also constitutes a path from \(v\) to \(w\), by considering the minimality of \(g(v, w)\), we have \(W_E(e) \geq \sum_{e' \in g(v, w)} W_E(e')\).
Additionally, \(d_{\text{edge}}\) is cost-dominated by its definition.
\end{proof}

\subsection{Toy example of path-representable distance}

\begin{figure}[!ht]
    \centering
    \includegraphics[width=0.8\textwidth]{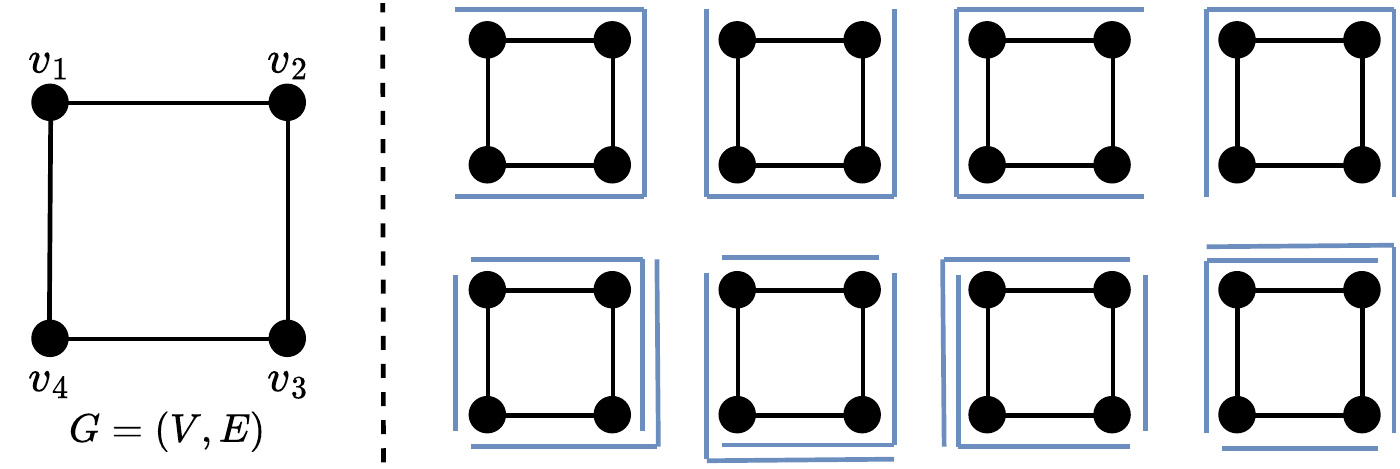} 
    \caption{The graph 
    $G=(V,E)$ and all of its possible path choice functions represented by their maximal paths.}
    \label{fig:Example_path_repre}
\end{figure}

Consider a connected weighted graph $G=(V,E, W_E)$ with \(V = \{v_1, v_2, v_3, v_4\}\) and \(E = \) \(\{ (v_1,v_2),\) \( (v_2,v_3) \), \( (v_3,v_4) \),\((v_1,v_4) \}\) as shown in the left figure of~\cref{fig:Example_path_repre}.
How many path-representable distances can we make for any path choice function $g$ on $G$? 
A maximal path can represent any path choice function $g$ on $G$, where the maximal path refers to a path that is maximal with respect to the inclusion.
The right figure of~\cref{fig:Example_path_repre} represents the maximal paths indicated by the given path choice function on graph \(G\).
The maximal paths are well-defined due to the consistency property of the path choice function.
In this case, a total of eight possible path choice functions can be defined on \(G\), as shown in~\cref{fig:Example_path_repre}.

\begin{figure}[!ht]
    \centering
    \includegraphics[width=0.9\textwidth]{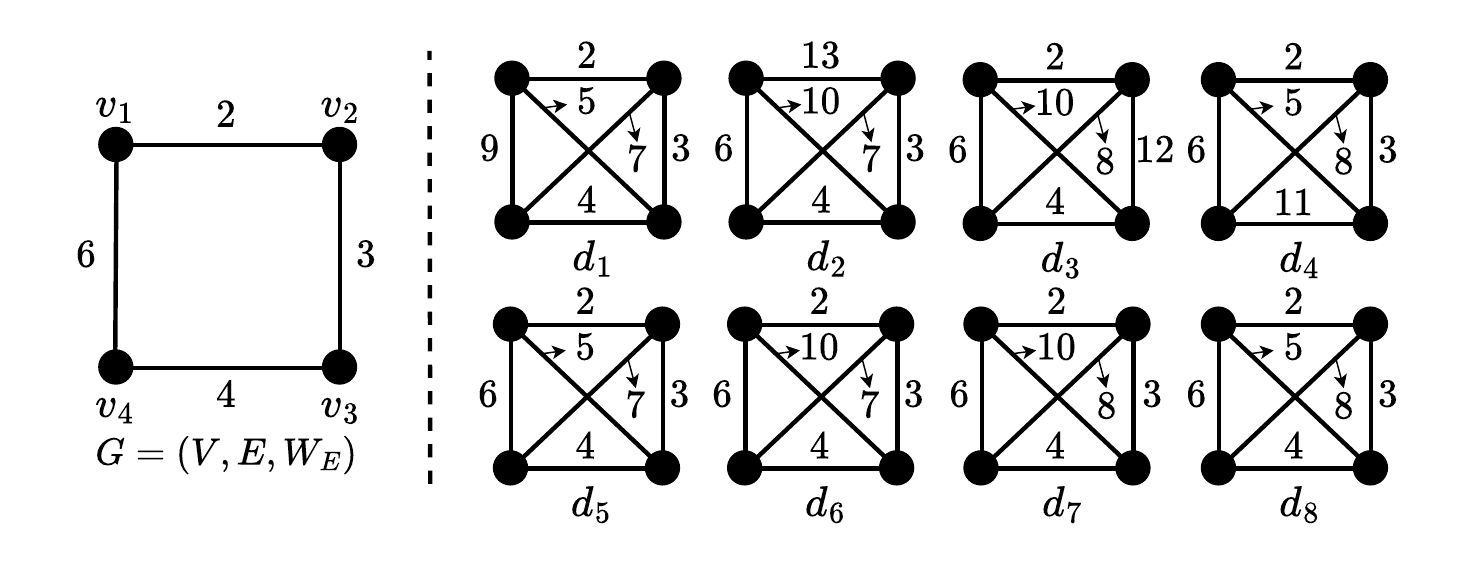} 
    \caption{The weighted graph 
    $G=(V,E,W_E)$ and all its possible path-representable distances.}
    \label{fig:Example_path_repre_distance}
\end{figure}

For the connected weighted graph \(G = (V, E, W_E)\) shown in the left figure of~\cref{fig:Example_path_repre_distance}, 
all path-representable distances derived from all the path choice functions are illustrated in the right figure of~\cref{fig:Example_path_repre_distance}, with a total of eight possibilities.
In this case, all the distances are weight-dominated, while only the distances \(d_5, d_6, d_7, d_8\) are cost-dominated.

\subsection{Main theorem}

Similarly, as discussed in~\cref{Thm: main theorem} with respect to $d_{\text{weight}}$ and $d_{\text{edge}}$,
the following generalization can be proved for cost-dominated path-representable distances:

\begin{restatable}{theorem}{mytheorem}
\label{Thm: main theorem_general}
Let $G=(V, E, W_E)$ be a connected weighted graph. 
Consider two cost-dominated path-representable distances $d_i$ and $d_j$ by path choice functions $g_i$ and $g_j$ on $G$, respectively. 
If $d_i(v,w) \le d_j(v,w)$ for any vertices $v$ and $w$, then there exists an injective function $\varphi_{i,j} : \textbf{bcd}_1(d_i) \rightarrow \textbf{bcd}_1(d_j)$ defined as $\varphi_{i,j}([\beta,\delta])=[\beta,\delta']$ such that $\delta \le \delta'$ for any $[\beta,\delta] \in \textbf{bcd}_1(d_i)$.   
\end{restatable}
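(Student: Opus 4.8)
The plan is to analyze the $1$-dimensional persistence barcodes through the reduced boundary matrix description given in \Cref{subsec:Persistent homology}, and to build the injection $\varphi_{i,j}$ by tracking birth $1$-simplices (i.e., edges of $G$) across the two filtrations. Recall that an element $[a,b]_{\sigma}$ of $\textbf{bcd}^{\pi}_1(d)$ has a birth $1$-simplex $\sigma = e \in E$ with $d$-filtration value $f_d(e) = a$; the key structural fact I want to exploit is that the birth of a $1$-cycle in the Rips filtration is governed by when enough edges are present to make a loop that is not yet filled by triangles, and the cost-dominated hypothesis severely constrains when an edge $e=(v,w)$ can first complete such a loop. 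Concretely, for a cost-dominated distance $d$ with path choice function $g$, if $g(v,w)\ne e$ then $d(v,w) = W_E(e)$ is $\ge$ the total weight of the path $g(v,w)$, and along that path each consecutive pair $(a,b)$ of vertices satisfies $d(a,b) \le W_E(e)$ by consistency and monotonicity of partial sums — so the edge $e$, at the moment it appears, is "shadowed" by a path all of whose edges (as $1$-simplices, and whose intermediate $2$-simplices can be checked) have already appeared. This means $e$ cannot be a birth edge of a genuine new loop via $d$; only edges with $g(v,w)=e$ (the "intrinsic" edges, on which $d(v,w)=W_E(e)$ in the most direct way) can serve as birth $1$-simplices. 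I would first prove this reduction lemma: every birth $1$-simplex of $\textbf{bcd}_1(d)$ for a cost-dominated $d$ lies in the set $E_g := \{e=(v,w) \in E \mid g(v,w)=e\}$, and moreover on $E_g$ the filtration value is simply $W_E(e)$.

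Second, I would compare the two filtrations on the common vertex set. Since $d_i \le d_j$, the $d_i$-filtration is "faster": $\mathbb{X}^{(i)}_{a} \supseteq \mathbb{X}^{(j)}_{a}$ for every $a$. The natural strategy is then to define $\varphi_{i,j}$ on a barcode element $[a,b]_{e}$ of $\textbf{bcd}_1(d_i)$ by sending it to the barcode element of $\textbf{bcd}_1(d_j)$ whose birth simplex is the \emph{same} edge $e$. For this to make sense I must show (a) that $e \in E_{g_i}$ forces $e \in E_{g_j}$, or at least that $e$ is still a birth simplex for $d_j$, and (b) that the $d_i$-birth value $a = W_E(e)$ equals the $d_j$-birth value. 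Point (b) is immediate from the reduction lemma once (a) holds, because on intrinsic edges both filtration values are just $W_E(e)$. Point (a) is where the hypotheses must be used carefully: if $g_j(v,w)=p\ne e$, then cost-domination gives $W_E(e)\ge \sum_{e'\in p} W_E(e')$, and combined with $d_i \le d_j$ and the cost-domination of $g_i$ one should be able to derive a contradiction with $e$ being a birth edge for $d_i$ — essentially, if $d_j$ prefers the detour $p$ over $e$, then $d_i$ does too (or does something at least as "early"), so $e$ was already shadowed in the $d_i$-filtration and could not birth a loop there.

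Third, with $\varphi_{i,j}$ defined by "same birth edge," injectivity is immediate (distinct barcode elements of $\textbf{bcd}_1(d_i)$ have distinct birth edges, since the barcode is indexed by birth $1$-simplices once a simplex ordering $\pi$ is fixed, by the permutation-invariance discussion), so the remaining content is the death-time inequality $b \le c$. Here I would use the $\mathbf{R}=\mathbf{D}\mathbf{V}$ description together with $\mathbb{X}^{(i)}_{a}\supseteq \mathbb{X}^{(j)}_{a}$: the loop born at $e$ in the $d_j$-filtration persists at least as long because every $2$-simplex that could kill it appears no earlier in the $d_j$-filtration than in the $d_i$-filtration (filtration values only increase when passing from $d_i$ to $d_j$), so the death value $c$ for $d_j$ is $\ge$ the death value $b$ for $d_i$. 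Making this precise requires showing that the homology class killed at time $b$ in $\textbf{bcd}_1(d_i)$ corresponds, under the inclusion-induced map, to the class born at $e$ in $\textbf{bcd}_1(d_j)$ and that it is still alive at all $d_j$-times $< b$; this is a standard interleaving/functoriality argument for Rips filtrations with $d_i \le d_j$.

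\textbf{Main obstacle.} I expect the crux to be step two, point (a) — proving that an edge which is a genuine loop-birth edge for the faster distance $d_i$ remains a loop-birth edge (with the \emph{same} birth value) for $d_j$. The subtlety is that "being a birth $1$-simplex" is not a purely local condition on $e$; it depends on the global pattern of which triangles and edges have already appeared, and the two path choice functions $g_i, g_j$ can differ in complicated ways even though $d_i \le d_j$ pointwise. The reduction lemma (birth edges lie in $E_g$) is meant to localize this, but I anticipate needing a careful case analysis on whether $g_i(v,w)$ and $g_j(v,w)$ agree, using cost-domination to rule out the bad case, and verifying that the $2$-simplices along the shadowing path behave correctly — that is the step where a naive argument is most likely to have a gap.
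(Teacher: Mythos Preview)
Your overall architecture matches the paper's: the reduction lemma (birth $1$-simplices lie in $E$ with filtration value $W_E(e)$) is exactly the content of \Cref{prop1} and \Cref{prop:birth_edge_weigh_fix}, the injection is indeed defined by ``same birth edge,'' and the death-time inequality is handled by the filtration inclusion $\mathbb{X}^{(j)}_t \subseteq \mathbb{X}^{(i)}_t$ just as you outline. So the skeleton is right.

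The genuine gap is exactly where you flagged it, and your proposed attack on it will not close it. Your plan for step (a) is to show that a birth edge $e=(v,w)$ for $d_i$ satisfies $e\in E_{g_j}$ by a case analysis on $g_j(v,w)$. Even if that succeeded, $e\in E_{g_j}$ is only a \emph{necessary} condition for $e$ to be a birth edge of $\textbf{bcd}_1(d_j)$; it says nothing about whether the $e$-column of $\mathbf{R}_j$ reduces to zero. That zero-column property is equivalent to $v$ and $w$ lying in the same connected component of the $d_j$-Rips $1$-skeleton just before $e$ appears, and this is controlled by the spanning tree the reduction algorithm builds from the $0$/$1$-simplex pairings --- a global object, not something the local comparison of $g_i(v,w)$ versus $g_j(v,w)$ can see. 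Since $d_j\ge d_i$, the $d_j$-filtration has \emph{fewer} edges at each level, so connectivity of $v,w$ at level $W_E(e)$ in the $d_i$-filtration does not transfer to the $d_j$-filtration by any soft argument.

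The paper supplies the missing structural lemma: for any weight-dominated (hence any cost-dominated) path choice function $g$, the minimum spanning trees of the graph completion $K^g$ coincide with $\text{MST}(G)$ (\Cref{prop:MST_same}). Consequently $\text{MST}(K^{g_i})=\text{MST}(K^{g_j})$, and one can choose a single simplex ordering $\pi$ under which the standard algorithm selects the \emph{same} spanning tree for both filtrations (\Cref{lemma:MST_standard_algorithm}, \Cref{prop:MST_standard_algorithm_same}). Once the tree edges agree, the non-tree edges --- precisely the edges whose columns reduce to zero --- agree as well, and the $e$-column of $\mathbf{R}_j$ is zero by the \emph{same} column operations that zero it in $\mathbf{R}_i$. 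That is the ingredient your case analysis cannot replace; the argument you need is about $0$-dimensional persistence (spanning trees), not about the path choice at the single edge $e$.
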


\begin{figure}[ht]
    \centering
    \includegraphics[width=0.8\textwidth]{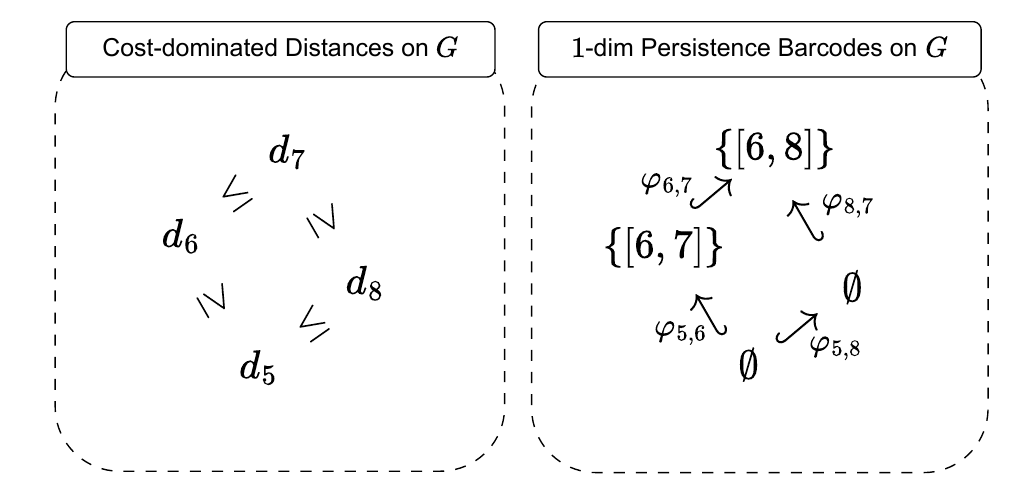} 
    \caption{
    The injective functions \(\varphi_{i,j} : \textbf{bcd}_1(d_i) \rightarrow \textbf{bcd}_1(d_j)\), as described in~\cref{Thm: main theorem_general}, for the four cost-dominated path-representable distances $d_5, d_6, d_7, d_8$ in~\cref{fig:Example_path_repre_distance}.
    }
    \label{fig:main_theorem_example_toy}
\end{figure}

\cref{fig:main_theorem_example_toy} illustrates the application of~\cref{Thm: main theorem_general} to the cost-dominated path-representable distances \(d_5, d_6, d_7, d_8\) described in~\cref{fig:Example_path_repre_distance}.
The left figure of~\cref{fig:main_theorem_example_toy} shows the partial order relationships among distances \(d_5, d_6, d_7, d_8\),
while the right figure displays the injective functions \(\varphi_{5,6}, \varphi_{6,7}, \varphi_{5,8}, \varphi_{8,7}\) between their 1-dimensional persistence barcodes.

\subsection{Remarks}

The main theorem can also be formulated using the concept of a poset. Let \(G\) be a connected weighted graph. To analyze the graph by defining the distance between the nodes, it is natural to define the set whose elements are the distances defined on \(G\). Especially in our theory, we focus on the poset \((\mathcal{L}, \le)\) whose elements are the cost-dominated path-representable distances defined on \(G\), where for two elements \(d_1, d_2 \in \mathcal{L}\), \(d_1 \le d_2\) if \(d_1(v, w) \le d_2(v,w)\) for any two points \(v, w \in V\). Furthermore, we calculate persistent homology using the standard algorithm~\cite{EHarer}, which calculates a persistence barcode from the given distance matrix. Note that \( (G,d) \) gives a unique distance matrix.  
We define such a function as \(\textbf{bcd}_k\), that is, \(\textbf{bcd}_k(d)\) is a \(k\)-dimensional persistence barcode obtained by \((G,d)\). Then the codomain of the function will be the poset of persistence barcodes on \(G\), as a subset of the poset of interval sets paired with \(k\)-simplices. Here, the partial order holds if we can find the proper inclusion between the elements with the same associated simplex in the sense of the main theorem. To be precise, for two $k$-dimensional persistence barcodes $S_1$ and $S_2$, we define \(S_1 \le S_2\) if there is an injection \(\varphi: S_1 \hookrightarrow S_2\) such that  
\(
\varphi([\beta,\delta]_{\sigma}) = [\beta,\delta']_{\sigma} \text{ for any }[\beta,\delta]_{\sigma} \in S_1 \text{ with } \delta \le \delta'. 
\) 
The main theorem claims that \(\textbf{bcd}_1\) becomes the order-preserving map.

Note that fixing the vertex order at least once is necessary to identify the exact injection. However, once the injection is identified, it will not change with respect to the change of vertex order. Hence, the whole theory holds when the codomain is a poset of persistence barcodes on \(G\) as a subset of the poset of multisets of intervals, carrying the injections defined with the associated birth edge. This gives canonicality to the theory.  
\begin{corollary}
    Consider the connected weighted graph \(G=(V,E,W_E)\).
    Let \(\mathcal{L}\) be the poset of path-representable distances from cost-dominated path choice functions on \(G\) and \(B\) be the poset of persistence barcodes on \(G\).  
    There is an order-preserving map \(\textbf{bcd}_1(\cdot)\) from \(\mathcal{L}\) to \(B\).   
\end{corollary}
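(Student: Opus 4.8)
The plan is to derive the corollary directly from \Cref{Thm: main theorem_general} by checking that the map $\textbf{bcd}_1(\cdot)$ is well-defined on the two posets in question and that it respects the orders. First I would verify that $\textbf{bcd}_1(\cdot)$ is genuinely a function $\mathcal{L}\to B$: given a cost-dominated path-representable distance $d$, the pair $(G,d)$ determines a unique distance matrix, hence—after fixing a vertex order once and for all, and invoking the permutation invariance discussed in~\Cref{subsec:Persistent homology}—a unique element $\textbf{bcd}_1(d)\in B$, with each interval carrying its associated birth $1$-simplex. This uses nothing beyond the setup already established.

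Next I would unwind the definition of the partial order on $B$ stated in the Remarks: for $S_1,S_2\in B$, $S_1\le S_2$ means there is an injection $\varphi:S_1\hookrightarrow S_2$ with $\varphi([a,b]_\sigma)=[a,b']_\sigma$ and $b\le b'$. The content of the corollary is then exactly: if $d_i\le d_j$ in $\mathcal{L}$ (i.e. $d_i(v,w)\le d_j(v,w)$ for all $v,w$), then $\textbf{bcd}_1(d_i)\le\textbf{bcd}_1(d_j)$ in $B$. But \Cref{Thm: main theorem_general} produces precisely such an injection $\varphi_{i,j}$, sending $[a,b]$ to $[a,c]$ with $b\le c$. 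The one point requiring care is that the order on $B$ demands the injection preserve the \emph{associated birth simplex} $\sigma$ (i.e. $\varphi([a,b]_\sigma)=[a,b']_\sigma$ with the \emph{same} $\sigma$), whereas the statement of \Cref{Thm: main theorem_general} as written only records the birth \emph{value} $a$. So I would point out that the injection constructed in the proof of \Cref{Thm: main theorem_general} is in fact built from a correspondence of birth $1$-simplices (each class in $\textbf{bcd}_1(d_i)$ has a birth edge, and $\varphi_{i,j}$ matches it to the class in $\textbf{bcd}_1(d_j)$ with the same birth edge), which is exactly the data the poset $B$ requires; once the vertex order is fixed, this identification is canonical and independent of any further reordering, as noted in the Remarks.

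Finally I would check the two poset axioms needed for "order-preserving" to be a meaningful statement on the nose—reflexivity ($\textbf{bcd}_1(d)\le\textbf{bcd}_1(d)$ via the identity injection) and that the relation $d_i\le d_j$ in $\mathcal{L}$ is inherited from the pointwise order on functions $V\times V\to\mathbb{R}^{\ge0}$, so $\mathcal{L}$ is indeed a poset—and then conclude: $d_i\le d_j\implies\textbf{bcd}_1(d_i)\le\textbf{bcd}_1(d_j)$, i.e. $\textbf{bcd}_1(\cdot)$ is order-preserving from $\mathcal{L}$ to $B$.

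The main obstacle I anticipate is not any hard computation but a bookkeeping subtlety: making sure the injection from \Cref{Thm: main theorem_general} really matches intervals \emph{with the same associated birth $1$-simplex} (not merely the same birth time $a$), so that it qualifies as a witness for $\le$ in $B$ as that order was defined in the Remarks, and confirming that this matching is well-defined and stable under the choice of vertex ordering. If the proof of \Cref{Thm: main theorem_general} does not literally phrase $\varphi_{i,j}$ in terms of birth simplices, I would need to extract that refinement from its construction; everything else is routine unpacking of definitions.
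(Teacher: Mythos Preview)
Your proposal is correct and matches the paper's approach: the corollary is stated immediately after the Remarks as a direct repackaging of \Cref{Thm: main theorem_general} in poset language, with no separate proof given beyond the surrounding discussion. The one subtlety you flag---that the injection $\varphi_{i,j}$ must preserve the associated birth $1$-simplex, not just the birth value---is exactly what the paper addresses in the Remarks and in the explicit construction $\varphi([a,b]_{\sigma_i})=[a,b']_{\sigma_i}$ at the end of the proof of \Cref{Thm: main theorem_general}, so your plan to extract that refinement is already anticipated.
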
  

Considering the domain and range, it is natural to question if the least and the greatest elements exist.  
If we take the domain to be the set of cost-dominated path-representable distances on \(G\), then \(\textbf{bcd}_1\) becomes the order-preserving map. Hence if the least and greatest elements exist, they will map to the least and the greatest elements in the range.  
\begin{figure}[!ht]
    \centering
    \includegraphics[width=0.4\textwidth]{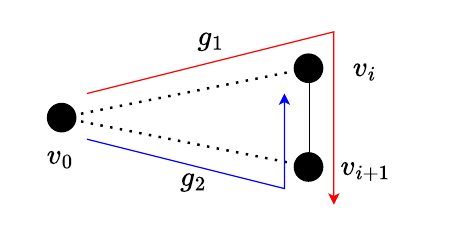} 
    \caption{The counterexample for the greatest element in \(\mathcal{L}\) illustrated in~\cref{prop:nogreatest}. Here \(g_1\) and \(g_2\) are the paths corresponding to \(d_1\) and \(d_2\), respectively.}
    \label{fig:Counterexample_Greatest}
\end{figure}
\begin{proposition}\label{prop:nogreatest}
    There is always the least element in the set of cost-dominated path-representable distances on \(G\), while there is no greatest element in general.
\end{proposition}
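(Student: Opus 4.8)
The plan is to argue the two halves separately, as they have quite different flavors. For the existence of a least element, I would first identify the natural candidate: the distance $d_{\text{weight}}$ itself, whose path choice function $g_{\text{weight}}$ selects a minimum-weight path between each pair of vertices. Proposition~\ref{dweight_dedge_elements} already guarantees $d_{\text{weight}}$ is cost-dominated, so it belongs to $\mathcal{L}$. It remains to show $d_{\text{weight}} \le d$ for every cost-dominated path-representable distance $d$. This is immediate: if $d$ is represented by a path choice function $g$, then for any $v,w$ the path $g(v,w)$ is \emph{some} path from $v$ to $w$, hence its total weight is at least the minimum over all such paths, i.e.\ $d_{\text{weight}}(v,w) = \sum_{e \in g_{\text{weight}}(v,w)} W_E(e) \le \sum_{e \in g(v,w)} W_E(e) = d(v,w)$. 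So $d_{\text{weight}}$ is a lower bound, and since it lies in $\mathcal{L}$, it is the least element. (One should note the poset is a poset of functions with pointwise order, so the least element is genuinely unique.)

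For the non-existence of a greatest element, I would produce an explicit counterexample graph, presumably the one in~\Cref{fig:Counterexample_Greatest}. The idea is to exhibit a $G$ carrying two cost-dominated distances $d_1, d_2$ (via path choice functions $g_1, g_2$) that are \emph{incomparable} in $\mathcal{L}$ and, crucially, have no common upper bound inside $\mathcal{L}$. The natural construction: take a graph with (at least) two vertices $v,w$ joined by a single direct edge $e=(v,w)$ of large weight, plus two internally disjoint detour paths $q_1$ and $q_2$ from $v$ to $w$, each with total weight less than $W_E(e)$ (so that using either detour is ``cost-dominated'' by $e$). Let $g_1$ route $v$ to $w$ via $q_1$ and $g_2$ route via $q_2$; arrange the weights on $q_1, q_2$ so that the resulting totals are incomparable across the relevant pairs (this may need the detours to interact with a third vertex, or simply differ in value at the pair $(v,w)$ only — then they are incomparable only if some \emph{other} pair forces the reverse inequality, so I would add a second ``gadget'' edge $e' = (v',w')$ with the roles of $q_1,q_2$ swapped). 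Any $d \in \mathcal{L}$ with $d \ge d_1$ and $d \ge d_2$ would need, at the pair $(v,w)$, $d(v,w) \ge \max\{\text{wt}(q_1), \text{wt}(q_2)\}$; but the path choice function of $d$ picks a \emph{single} path $g(v,w)$, and I must show every available path has weight strictly below this max (because the only paths are $e$, $q_1$, $q_2$, and their weights are all $\le W_E(e)$ but... ) — this is exactly where care is needed.

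The main obstacle is the last step: ruling out \emph{any} common upper bound in $\mathcal{L}$, not merely showing $d_1$ and $d_2$ are incomparable. Incomparability alone does not preclude a greatest element elsewhere in the poset. So the counterexample must be engineered so that the pointwise supremum $\max(d_1, d_2)$ (which is the only candidate for a least upper bound, and a fortiori the only obstruction-free candidate for the greatest element if $d_1,d_2$ are maximal) is \emph{not itself path-representable by a cost-dominated path choice function} — e.g.\ because achieving $d_1$'s value on one pair and $d_2$'s value on another forces a choice of paths that violates consistency (the optimal-substructure requirement), or forces a subpath whose induced value contradicts one of the two targets. Concretely I would check: the pointwise max of $d_1,d_2$ disagrees with every path-representable distance on $G$ because on the pair $(v,w)$ it would demand the weight of $q_1$ while on $(v',w')$ it demands a configuration incompatible with having chosen $q_1$ as a subpath. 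I would then conclude that no element of $\mathcal{L}$ dominates both $d_1$ and $d_2$, hence $\mathcal{L}$ has no greatest element. Verifying the consistency failure carefully — tracking which subpaths are forced and showing they cannot be simultaneously realized — is the delicate part; the weight bookkeeping to keep everything cost-dominated is routine by comparison.
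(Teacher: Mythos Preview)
Your argument for the least element is correct and matches the paper's: $d_{\text{weight}}$ is cost-dominated by Proposition~\ref{dweight_dedge_elements}, and it is automatically $\le$ every path-representable distance since each such distance evaluates the weight of \emph{some} path.

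For the non-existence of a greatest element, you have correctly isolated the mechanism---the consistency condition on path choice functions should make it impossible for a single $d_{\max}$ to dominate two suitably chosen distances---but your concrete construction has a gap that you yourself half-notice and do not close. In your setup the pair $(v,w)$ is joined by a genuine edge $e$ of large weight. Then the path choice function that simply selects $e$ itself at $(v,w)$ is cost-dominated (the condition is vacuous when $g(v,w)=e$) and gives value $W_E(e) \ge \max\{\text{wt}(q_1),\text{wt}(q_2)\}$, so it dominates both $d_1$ and $d_2$ at that pair. Adding a second, independent gadget $e'=(v',w')$ does not help: the same ``take the direct edge'' choice at $(v',w')$ again yields an upper bound, and there is no interaction between the gadgets to create a consistency conflict. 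Your plan to derive a consistency violation from the pointwise max therefore never gets off the ground, because a common upper bound exists outright.

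The paper's construction sidesteps exactly this escape hatch. It takes $G$ to be a \emph{cycle} on at least five vertices and looks at pairs $(v_0,v_i)$, $(v_0,v_{i+1})$ where $v_i,v_{i+1}$ are consecutive on the cycle but \emph{not adjacent to} $v_0$. Now there is no direct edge available: any path choice function must send $(v_0,v_i)$ to one of the two arcs of the cycle, and likewise for $(v_0,v_{i+1})$. Choosing weights so that the longer arc to $v_i$ passes through $v_{i+1}$ while the longer arc to $v_{i+1}$ passes through $v_i$, a putative $d_{\max}$ is forced to pick the longer arc in each case; but then $v_{i+1}$ lies on $g_{\max}(v_0,v_i)$, so consistency forces $g_{\max}(v_0,v_{i+1})$ to be the subpath of that arc, contradicting the other requirement. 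The essential difference from your attempt is that the absence of a direct edge leaves only two candidate paths per pair, so ``dominate both $d_1$ and $d_2$'' pins down the choice uniquely and the consistency conflict becomes unavoidable. If you want to salvage your approach, you would need to remove the edge $e$ (and $e'$) and work with pairs that are only connected via the detours---at which point you have essentially rediscovered the cycle construction.
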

\begin{proof}
    First, if we choose \(d_{\text{weight}}\) as an element, then it becomes the least element according to~\cref{def: distance_weight}. Note that \(d_{\text{weight}}\) is indeed the element in the domain by~\cref{dweight_dedge_elements}. \\  
    Now we take a counterexample for the greatest element. 
    Let \(G\) be a cycle graph with at least 5 vertices and suppose that \(d_{\max}\) is the greatest element in \(P\).  
    As illustrated in~\cref{fig:Counterexample_Greatest}, fix a control point \(v_{0}\). Since \(d_{\max}\) is path-representable, for every other vertex \(v\) of \(G\), we must choose between two paths from \(v_{0}\) to \(v\) which \(d_{\max}\) calculates.  
    Let \(v_{i}\) and \(v_{i+1}\) be two consecutive nodes. Let \(d_1\) be the distance that chooses the path from \(v_{0}\) to \(v_{i}\) that does not contain \(v_{i+1}\) and the path from \(v_{0}\) to \(v_{i+1}\) that contains \(v_{i}\). Let \(d_2\) be the distance that chooses the path from \(v_{0}\) to \(v_{i+1}\) that does not contain \(v_{i}\) and the path from \(v_{0}\) to \(v_{i}\) that contains \(v_{i+1}\).  
    Note that we can make these two distances cost-dominated since \(v_{i}\) and \(v_{i+1}\) are not adjacent to \(v_{0}\), and \(G\) is a cycle graph.  
    Then \(d_1(v_{0}, v_{i}) > d_2(v_{0}, v_{i})\) and \(d_1(v_{0}, v_{i+1}) < d_2(v_{0}, v_{i+1})\). Thus, due to the assumption for \(d_{\max}\), we have \(d_{\max}(v_{0}, v_{i}) = d_1(v_{0}, v_{i})\) and \(d_{\max}(v_{0}, v_{i+1}) = d_2(v_{0}, v_{i+1})\) since there are only two candidates for each selection of path.  
    This leads to a contradiction to the consistency condition of path-representable distance.
\end{proof}

\begin{figure}[ht]
    \centering
    \includegraphics[width=0.9\textwidth]{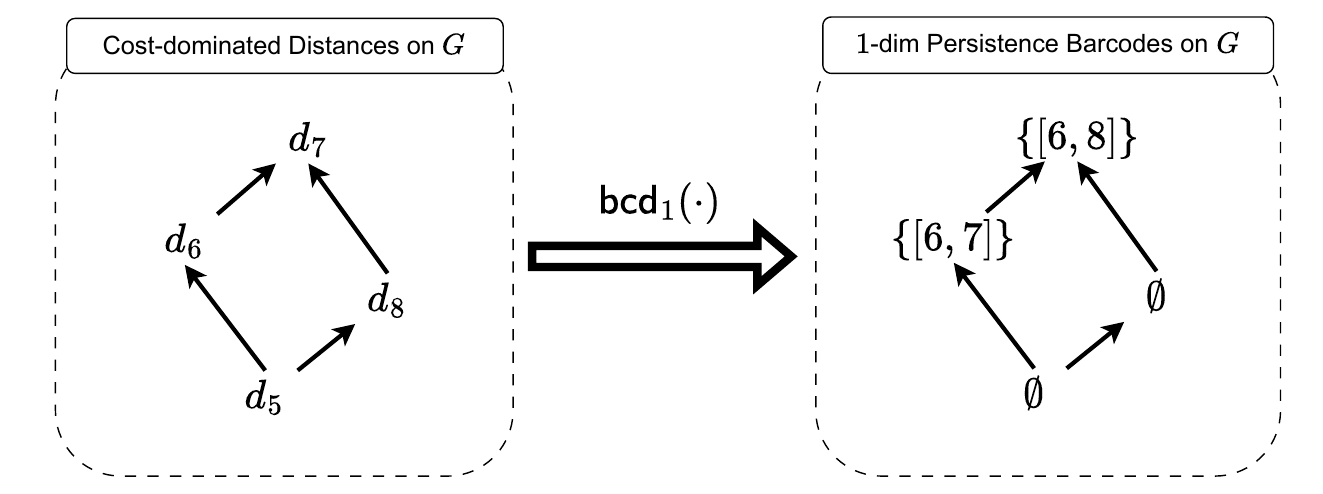} 
    \caption{\(\textbf{bcd}_1\) being an order preserving map between the poset of cost-dominated path-representable distance on $G$~(Left) and the poset of $1$-dimensional persistence barcode on $G$ from~\cref{fig:Example_path_repre_distance}~(Right). Here the arrow denotes the partial order for each poset.} 
    \label{fig:Example_poset2}
\end{figure}
\section{Proofs of main theory}\label{sec:Proofs of main theory}

\begin{lemma}\label{lemma1}
For a weighted graph $G=(V,E,W_E)$, let $d:V \times V \rightarrow \mathbb{R}$ be any distance defined on $G$. 
 If $\sigma=(v,w)$ is a birth simplex of dimension $1$ with the distance $d(v,w)$ of a $1$-dimensional cycle, there is no vertex $v_0 \in V(G)$ such that $d(v,v_0)<d(v,w)$ and $d(w,v_0)<d(v,w)$.
\end{lemma}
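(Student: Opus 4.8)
The plan is to argue by contradiction using the Rips filtration structure and the characterization of a birth $1$-simplex via the $\mathbf{R}=\mathbf{D}\mathbf{V}$ decomposition. Suppose, for contradiction, that there is a vertex $v_0$ with $d(v,v_0) < d(v,w)$ and $d(w,v_0) < d(v,w)$. Set $a \coloneqq d(v,w)$ and let $a' \coloneqq \max\{d(v,v_0), d(w,v_0)\} < a$. Then at filtration value $a'$ the edges $(v,v_0)$ and $(w,v_0)$ are already present in $\mathbb{X}_{a'}$, while the edge $(v,w)$ enters strictly later at value $a$. Moreover, because $f$ is the max-of-pairwise-distances function, the $2$-simplex $\tau = \{v,w,v_0\}$ has $f(\tau) = \max\{a, d(v,v_0), d(w,v_0)\} = a$, i.e. the triangle $\{v,w,v_0\}$ enters the filtration at exactly the same value $a$ as the edge $(v,w)$.

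Next I would translate "$\sigma = (v,w)$ is a birth $1$-simplex of a cycle" into the column-reduction language from~\Cref{subsec:Persistent homology}: the column of $\mathbf{R}$ indexed by $\sigma$ is zero, so $\sigma$ belongs to a $1$-cycle $\tau_{\mathrm{cyc}}$ that appears at value $a$. The key point is that the path $(v,v_0)$ followed by $(v_0,w)$ together with the edge $(v,w)$ forms a $1$-cycle, and this cycle is precisely the boundary of the $2$-simplex $\{v,w,v_0\}$. Since all three of these simplices have filtration value $\le a$, this boundary relation is available in $\mathbb{X}_a$ — in fact the $2$-simplex $\{v,w,v_0\}$ is the one whose column in $\mathbf{D}$ contains the edge $(v,w)$ and whose two other faces $(v,v_0),(w,v_0)$ have strictly smaller filtration value.

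The crux of the argument is then to show that $(v,w)$ cannot have the largest filtration value among the edges of any $1$-cycle it participates in at value $a$, which contradicts its being a birth simplex (a birth $1$-simplex must be the "youngest" simplex of the cycle it creates, i.e. it must have column $\mathrm{low}$ equal to something, while here the triangle $\{v,w,v_0\}$ with $\mathrm{low}$ pointing at $(v,w)$ would instead kill rather than allow the birth). Concretely: when we process the column of the $2$-simplex $\{v,w,v_0\}$ in Algorithm~\ref{alg:R=DV}, its boundary is $(v,v_0)+(v_0,w)+(v,w)$; after reducing against earlier columns, the lowest remaining entry must point to the edge among $\{(v,v_0),(v_0,w),(v,w)\}$ — or a later-born edge obtained through reduction — that is youngest, and since $(v,w)$ is strictly the youngest of the three faces of this triangle, $(v,w)$ is forced to be paired as a death (it gets $\mathrm{low}_{\mathbf{R}}$ assigned to it by the triangle's column or by an even later column), so its own column, if it were zero, would make it a death-simplex rather than a birth-simplex — contradiction. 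I would spell this out by invoking the standard fact that in a reduced matrix a simplex cannot simultaneously be a birth simplex (zero column) and satisfy $\mathrm{low}_{\mathbf{R}}(j) = $ its index for some $j$, together with permutation invariance to order $(v,w)$ after $(v,v_0)$ and $(w,v_0)$ within value $a$ if needed.

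The main obstacle I anticipate is making the column-reduction step fully rigorous: one must be careful that reducing the triangle's column against other columns born at value $\le a$ does not accidentally cancel the entry at $(v,w)$ before it becomes the lowest. The clean way around this is to argue at the level of homology classes instead: in $\mathbb{X}_{a}$ the edge $(v,w)$ is a boundary (it equals $(v,v_0)+(v_0,w)+\partial\{v,w,v_0\}$), so $[\sigma]$ cannot represent a nontrivial class being born at value $a$ — any cycle through $\sigma$ at value $a$ is already trivial in $H_1(\mathbb{X}_a)$ — contradicting that $\sigma$ is the birth simplex of a $1$-dimensional cycle. I would therefore present the homological version as the primary argument and mention the matrix-reduction viewpoint only as intuition.
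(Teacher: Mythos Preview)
Your setup and core idea---the triangle $\{v,w,v_0\}$ enters $\mathbb{X}_a$ simultaneously with the edge $(v,w)$ and should force $(v,w)$ to die immediately---is exactly the paper's approach. The matrix-reduction argument you sketch is what the paper actually runs, and your worry that the $(v,w)$ entry might be cancelled during reduction is a legitimate gap that the paper's one-line justification (``$(v,w)$ is necessarily chosen as the index of $\text{low}_{\mathbf{R}}$'') also glosses over.

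Your proposed fix, however, is where the real problem lies. The relation $(v,w)=(v,v_0)+(w,v_0)+\partial\{v,w,v_0\}$ tells you only that any $1$-cycle containing $(v,w)$ is \emph{homologous in $\mathbb{X}_a$} to a cycle that avoids $(v,w)$; it does \emph{not} make that cycle trivial in $H_1(\mathbb{X}_a)$. The sentence ``any cycle through $\sigma$ at value $a$ is already trivial in $H_1(\mathbb{X}_a)$'' is simply false, so the contradiction does not follow from the homological version as you have written it. The correct resolution of your worry stays on the matrix side: if reducing the column of $T=\{v,w,v_0\}$ ever cancels the $(v,w)$ entry, that cancellation must come from some earlier $2$-simplex column $j'$ with $\text{low}_{\mathbf{R}}(j')=(v,w)$; but every $2$-simplex of filtration value strictly less than $a$ has its reduced column supported on edges of value strictly less than $a$, so such a $j'$ necessarily has filtration value exactly $a$, and then $j'$ already pairs with $(v,w)$ at value $a$---the same contradiction. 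Equivalently, use permutation invariance to place $T$ first among all $2$-simplices of value $a$, so that no reduction of its column can occur and $\text{low}_{\mathbf{R}}(T)=(v,w)$ holds directly. Supply this step in place of the homological argument and your proof is complete and essentially identical to the paper's.
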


\begin{proof}
Let $e=(v,w)$ be a birth edge with the distance $d(v,w)$ of a $1$-dimensional cycle. 
Suppose that there is a vertex $v_0 \in V(G)$ such that $d(v,v_0)<d(v,w)$ and $d(w,v_0)<d(v,w)$.
If we denote $t_0 = \max(d(v,v_0),d(w,v_0))$, then the edges $(v,v_0), (w,v_0)$ are already in the simplicial complex $\mathbb{X}_{t_0}$.
Now, considering the $\mathbf{R}=\mathbf{D}\mathbf{V}$ decomposition, with the boundary matrix $D$, the $2$-simplex $( v, w, v_0)$ maps to $(v,w) + (v,v_0) + (w,v_0)$. 
Of those three edges, $(v,w)$ is the most recent edge born before $(v,w,v_0)$. Thus $(v,w)$ is necessarily chosen as the index of $\text{low}_{\mathbf{R}}$.
The edge $e =(v,w) $ has its birth value as $d(v,w)$ and its death value as $d(v,w)$ because $(v,w)$ and $(v,w,v_0)$ are created simultaneously.
This contradicts the fact that $e$ is a birth edge.
\end{proof}

\begin{proposition}\label{prop1}
  Let $G=(V,E,W_E)$ be a connected weighted graph.
  Consider any path choice function $g$ on $G$ and its path-representable distances $d$ along $g$.
  Then every birth simplex of dimension $1$ of $\textbf{bcd}_1(d)$ exists in $G$.
  Therefore, it is justified to call a birth simplex of dimension $1$ as a birth `edge'.
\end{proposition}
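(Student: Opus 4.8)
The plan is to argue by contradiction and reduce everything to Lemma~\ref{lemma1}. Suppose $\sigma=(v,w)$ is a birth $1$-simplex of $\textbf{bcd}_1(d)$ but that $(v,w)\notin E(G)$. Since $v\neq w$, the chosen path $g(v,w)\in P(v,w)$ is a simple path from $v$ to $w$; if it consisted of a single edge, that edge would be $(v,w)$, contradicting $(v,w)\notin E(G)$. Hence $g(v,w)$ contains at least two edges, so it has an intermediate vertex $v_0$ with $v_0\neq v$ and $v_0\neq w$.

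Next I would use the consistency property of $g$ to control $d(v,v_0)$ and $d(w,v_0)$. Because $v,v_0,w$ all lie on $g(v,w)$, consistency gives $g(v,v_0)\subseteq g(v,w)$ and $g(w,v_0)\subseteq g(v,w)$, and both are simple paths (elements of $\mathcal{P}$). Since $g(v,w)$ is a simple path, the only simple subpath joining $v$ and $v_0$ is the segment of $g(v,w)$ between them; thus $g(v,v_0)$ is that segment, and likewise $g(w,v_0)$ is the segment of $g(v,w)$ between $v_0$ and $w$. These two segments meet only at $v_0$ and together exhaust the edges of $g(v,w)$, so $E(g(v,w)) = E(g(v,v_0))\sqcup E(g(w,v_0))$ and hence $d(v,w) = d(v,v_0) + d(w,v_0)$. (Applying consistency with $(a,b)=(w,v)$ and with $(a,b)=(v,w)$ also yields $g(v,w)=g(w,v)$ as edge sets, so $d$ is symmetric and there is no ambiguity in the birth value $f(\{v,w\})$.)

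Now both $g(v,v_0)$ and $g(w,v_0)$ are nonempty paths, since $v_0$ differs from $v$ and from $w$, and $W_E$ takes strictly positive values; therefore $d(v,v_0)>0$ and $d(w,v_0)>0$. Combined with $d(v,w) = d(v,v_0) + d(w,v_0)$ this forces $d(v,v_0)<d(v,w)$ and $d(w,v_0)<d(v,w)$. But $\sigma=(v,w)$ is a birth $1$-simplex, so Lemma~\ref{lemma1} forbids the existence of exactly such a vertex $v_0$, a contradiction. Hence $g(v,w)$ must be the single edge $(v,w)$, i.e., $(v,w)\in E(G)$, which is what we wanted; the remark about ``birth edges'' follows immediately.

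I do not expect a serious obstacle here. The only point that needs genuine care is justifying that the subpaths singled out by the path choice function are precisely the two segments of $g(v,w)$, so that their weights add up to $d(v,w)$; this is exactly where the consistency property and the ``no repeated vertex'' requirement in the definition of $\mathcal{P}$ are used. Everything else is positivity of the edge weights together with a direct invocation of Lemma~\ref{lemma1}.
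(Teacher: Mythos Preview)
Your argument is correct and follows essentially the same route as the paper: assume the birth $1$-simplex $(v,w)$ is not an edge of $G$, use the consistency of $g$ to exhibit an intermediate vertex on $g(v,w)$ whose $d$-distances to both $v$ and $w$ are strictly smaller than $d(v,w)$, and then invoke Lemma~\ref{lemma1} for a contradiction. The only cosmetic difference is that the paper fixes the penultimate vertex $v_{m-1}$ on $g(v,w)$ rather than an arbitrary interior vertex, but the logic is identical.
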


\begin{proof}
  Let $e=(v,w)$ be a birth simplex of dimension $1$ with a birth value $d(v,w)$. Suppose $ e \notin E$.
  There is a path $g(v,w)$ in $G$ such that $d(v,w) = \sum_{e \in g(v,w)} W_E(e)$.
  Denote the path $g(v,w)$ as the sequence of vertices $v=v_0, v_1, v_2 , \ldots , v_{m-1}, v_m = w$ and $a_k=W_E(( v_{k-1},v_k ))$.
  Then, by the consistency property of the path choice function $g$, the path $g(v, w_{m-1})$ is a sequence of $v_0$, $v_1$, $v_2$, $\ldots$, $v_{m-1}$. 
  Therefore, we know that $d(v, v_{m-1}) = a_1 + a_2 + \cdots + a_{m-1} < d(v,w)$.
  Since $d(v_{m-1},w) < d(v,w)$ is obvious, this contradicts~\cref{lemma1}.
 \end{proof}

Let $\mathcal{B}$ be the set of all birth edges in $\textbf{bcd}_1(d)$.
By~\cref{prop1}, we know that $\mathcal{B} \subseteq E$.
Generally, for any edge $e = (v,w) \in E$, it is not necessary that $W_E(e) = d(v,w)$. For example, cases where $W_E(e)$ $> d_{\text{weight}}(v,w)$ are common.
However, for the birth edges, the following interesting proposition holds.
\begin{proposition}\label{prop:birth_edge_weigh_fix}
  For any $e=(v,w) \in \mathcal{B}$, we always have $W_E(e) = d(v,w)$.
\end{proposition}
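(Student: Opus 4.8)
The plan is to argue by contradiction, assuming that $W_E(e) \neq d(v,w)$ for some birth edge $e = (v,w) \in \mathcal{B}$. Since $e$ itself is a path from $v$ to $w$ and $d$ is path-representable via a path choice function $g$, we have $d(v,w) = \sum_{e' \in g(v,w)} W_E(e')$. If $g(v,w) = e$, then trivially $W_E(e) = d(v,w)$, so we may assume $g(v,w) \neq e$; in particular the path $g(v,w)$ has at least two edges. Write $g(v,w)$ as the vertex sequence $v = v_0, v_1, \ldots, v_m = w$ with $m \ge 2$. As in the proof of~\Cref{prop1}, the consistency property of $g$ gives $d(v, v_{m-1}) = \sum_{k=1}^{m-1} W_E((v_{k-1},v_k)) < d(v,w)$ and $d(v_{m-1}, w) < d(v,w)$, which would contradict~\Cref{lemma1}. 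Wait — this already shows $g(v,w) = e$ must hold, hence $W_E(e) = d(v,w)$. So the core of the argument is essentially a restatement of~\Cref{prop1}'s reasoning: a birth edge cannot have its distance realized by a strictly longer path.

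More carefully, the key steps in order would be: (1) fix $e = (v,w) \in \mathcal{B}$ and invoke path-representability to write $d(v,w)$ as the weight-sum along $g(v,w)$; (2) split into the two cases $g(v,w) = e$ and $g(v,w) \neq e$; (3) in the first case conclude immediately; (4) in the second case, use consistency to extract the penultimate vertex $v_{m-1}$ of $g(v,w)$, show that both $d(v,v_{m-1}) < d(v,w)$ and $d(v_{m-1},w) < d(v,w)$ hold (the former from the weight-sum being a proper sub-sum of a sum of positive weights, the latter since $(v_{m-1},w) \in g(v,w)$ so $d(v_{m-1},w) = W_E((v_{m-1},w)) < d(v,w)$, again a proper sub-sum); (5) apply~\Cref{lemma1} with $v_0 := v_{m-1}$ to reach a contradiction with $e$ being a birth edge.

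The only subtlety — and the main thing to be careful about — is step (4), specifically verifying the strict inequalities. We need $W_E$ to take strictly positive values (which holds: $W_E: E \to \mathbb{R}^{>0}$) so that dropping the last edge from the sum $d(v,w) = \sum_{k=1}^m a_k$ strictly decreases it, giving $d(v,v_{m-1}) = \sum_{k=1}^{m-1} a_k < \sum_{k=1}^{m} a_k = d(v,w)$; and similarly $d(v_{m-1},w) = a_m < \sum_{k=1}^m a_k = d(v,w)$ because $m \ge 2$ means there is at least one more positive term. Both of these identifications of sub-distances rely on the consistency of $g$: the chosen path between any two vertices lying on $g(v,w)$ is the corresponding sub-path. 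Since this is exactly the mechanism already used in~\Cref{prop1}, I expect no genuine obstacle here — the proposition is really a minor strengthening of~\Cref{prop1}, observing that the same contradiction argument rules out not just $e \notin E$ but also $e \in E$ with $W_E(e) > d(v,w)$, leaving $W_E(e) = d(v,w)$ as the only possibility.
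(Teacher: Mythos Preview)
Your proposal is correct and follows essentially the same argument as the paper: assume $W_E(e)\neq d(v,w)$, infer $g(v,w)\neq e$, use consistency to bound $d(v,v_{m-1})$ and $d(v_{m-1},w)$ strictly below $d(v,w)$, and invoke \Cref{lemma1} for the contradiction. Your write-up is in fact slightly more explicit than the paper's in justifying $d(v_{m-1},w)<d(v,w)$ via consistency and the positivity of $W_E$.
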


\begin{proof}
Consider any birth edge $e = (v,w) \in \mathcal{B}$. 
Suppose that $W_E(e) \neq d(v,w)$. Then the path $g(v,w)$ is not the edge $e$. 
Write $g(v,w)$ as $v=v_0$, $v_1$, $\ldots$ , $v_m = w$. 
We know that $d(v,w) = \sum_{e' \in g(v,w)} W_E(e') = \sum_{k=1}^{m}W_E((v_{k-1},v_k ))$.
Then $d(v,v_{m-1}) =  \sum_{k=1}^{m-1}W_E((v_{k-1},v_k )) < d(v,w)$ because the path $g(v_0,v_{m-1})$ is a sequence $v_0$, $v_1$, $\ldots$, $v_{m-1}$ and $W_E$ is the positive function. 
Since $d(v_{m-1},w) < d(v,w)$, obviously this contradicts~\cref{lemma1}.  
\end{proof}

Consider all path-representable distances on the graph \( G = (V, E, W_E) \).
Then, by~\cref{prop1,prop:birth_edge_weigh_fix},
their birth simplices of dimension $1$ become elements of \( E \), and the distances between the endpoints of the edges are given by the weight \( W_E \).

Let $G=(V,E,W_E)$ be any weighted graph. 
A \textit{tree} of $G$ is a connected subgraph which does not have a cycle. 
A \textit{forest} of $G$ is a subgraph which does not have a cycle.
A \textit{spanning tree} of $G$ is a tree of $G$ that includes all the vertices of $G$.
For fixed $G$, there are many spanning trees of $G$.
A \textit{minimum spanning tree} of $G$ is a spanning tree of $G$ that has the smallest sum of weights among all possible spanning trees of $G$.
Let $\text{MST}(G)$ denote the set of all minimum spanning trees of $G$.

\begin{lemma}\label{lemma:cycle_property_tree}
Let $G=(V,E,W_E)$ be any connected weighted graph and $\mathcal{T} \in \text{MST}(G)$.
Suppose that we have a cycle $\tau$ in $G$ and that there is an edge $e \in \tau$ such that $W_E(e) > W_E(e')$ for any other edge $e' \in \tau$.
Then $e \notin \mathcal{T}$. 
\end{lemma}
\begin{proof}
    Assume that $e \in \mathcal{T}$. 
    As $\mathcal{T}$ is a tree, if we remove \(e\), then $\mathcal{T}$ is split into two subtrees $\mathcal{T}_1$ and $\mathcal{T}_2$. 
    Note that \(\tau - e\) is also a path that connects $\mathcal{T}_1$ and $\mathcal{T}_2$. 
    Because $\mathcal{T}$ is a spanning tree, every vertex in \(\tau - e\) is in $\mathcal{T}_1$ or $\mathcal{T}_2$.
    Therefore, there exists an edge \(e'\) in \(\tau\) connecting $\mathcal{T}_1$ and $\mathcal{T}_2$ such that $W_E(e) > W_E(e')$.
    However, replacing the edge \(e\) in \(\mathcal{T}\) with \(e'\) results in a tree \(\mathcal{T}_1 \cup \mathcal{T}_2 \cup \{ e' \} \) that is a spanning tree with a sum of weights smaller than \(\mathcal{T}\). This is a contradiction.
\end{proof}

\begin{definition}[Edge swap process of spanning trees]
    Let $\mathcal{T}$ be a spanning tree in a connected graph $G$. 
    Consider any edge $e \notin \mathcal{T}$. Then there exists a unique path $p$ in $\mathcal{T}$ connecting the endpoints of $e$. 
    For any edge $e_p$ in the path $p$, we can replace the edge $e$ with $e_p$. 
    This results in another spanning tree $\mathcal{T}' = \mathcal{T} \cup \{e\} - \{e_p\}$.
    This operation is called the \textit{edge swap process} of a spanning tree.
\end{definition}

Consider a connected weighted graph $G=(V,E,W_E)$.
Define $\bar{E} = \{ (v,w) \mid v,w \in V \}$.
For any path choice function $g$, we have its path-representable distance $d_g$ and a weighted complete graph $K^g = (V,\bar{E},W_{\bar{E}})$, where $W_{\bar{E}}(e) = d_g(v,w)$ for all $e=(v,w) \in \bar{E}$.
We name this graph $K^g$ \textit{the graph completion of $G$ by $g$}.

\begin{proposition}\label{prop:MST_same}
Let $G=(V,E,W_E)$ be a connected weighted graph, and let $K^g=(V,\bar{E},W_{\bar{E}})$ be its graph completion by $g$ as explained. 
If $g$ is weight-dominated, then we have
$\text{MST}(K^g) = \text{MST}(G)$ as the weighted graph.
\end{proposition}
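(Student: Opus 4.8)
The plan is to show the two sets of minimum spanning trees coincide by a two-way containment, using the edge swap process together with \Cref{lemma:cycle_property_tree}. First I would observe the key structural fact about $K^g$ relative to $G$: every edge of $K^g$ that is \emph{not} an edge of $G$ is ``redundant'' in the sense that the path $g(v,w)$ realizing $d_g(v,w)=W_{\bar E}((v,w))$ consists of edges of $G$, each of which has strictly smaller $K^g$-weight than $(v,w)$ itself. Indeed, if $(v,w)\notin E$ or if $g(v,w)\neq (v,w)$, then writing $g(v,w)$ as $v=v_0,v_1,\dots,v_m=w$, the consistency of $g$ gives $g(v_{k-1},v_k)=(v_{k-1},v_k)$ (a single edge of $G$), and the weight-dominated hypothesis together with positivity of $W_E$ forces $W_{\bar E}((v_{k-1},v_k))=W_E((v_{k-1},v_k)) < d_g(v,w)=W_{\bar E}((v,w))$; moreover these edges $(v_{k-1},v_k)$ lie on the cycle formed by $(v,w)$ together with the path $g(v,w)$ inside $K^g$. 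So $(v,w)$ is a strict-maximum edge on a cycle of $K^g$, and by \Cref{lemma:cycle_property_tree} it lies in no $\mathcal{T}\in\mathrm{MST}(K^g)$.

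\medskip
\noindent\textbf{Step 1: $\mathrm{MST}(K^g)\subseteq \mathrm{MST}(G)$.} Let $\mathcal{T}\in\mathrm{MST}(K^g)$. By the observation above, every edge of $\mathcal{T}$ is an edge $(v,w)$ of $G$ with $W_{\bar E}((v,w))=W_E((v,w))$; hence $\mathcal{T}$ is a spanning tree of $G$ with the same total weight in both graphs. If $\mathcal{T}$ were not a minimum spanning tree of $G$, there would be a spanning tree $\mathcal{S}$ of $G$ with strictly smaller $W_E$-weight; but $\mathcal{S}$ is also a spanning subtree of $K^g$ and, since on edges of $G$ the weights agree ($W_{\bar E}=W_E$ on $E$ — this needs a small check for edges of $E$ with $g(v,w)\neq(v,w)$, which are then also cycle-maxima and excluded from any MST anyway, so we may assume $\mathcal{S}\subseteq E$ uses only edges with $W_{\bar E}=W_E$ after an edge-swap cleanup), $\mathcal{S}$ would have strictly smaller $W_{\bar E}$-weight than $\mathcal{T}$ in $K^g$, contradicting $\mathcal{T}\in\mathrm{MST}(K^g)$.

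\medskip
\noindent\textbf{Step 2: $\mathrm{MST}(G)\subseteq \mathrm{MST}(K^g)$.} Let $\mathcal{T}\in\mathrm{MST}(G)$. Then $\mathcal{T}$ is a spanning tree of $K^g$; since every edge $(v,w)\in E$ used by $\mathcal{T}$ that has $W_{\bar E}((v,w))\neq W_E((v,w))$ is a strict cycle-maximum in $K^g$ (by the observation), an edge-swap would lower its $K^g$-weight, so in fact a minimum spanning tree of $G$ can be taken to use only edges with $W_{\bar E}=W_E$; thus $\mathcal{T}$'s $K^g$-weight equals its $W_E$-weight. Now take any $\mathcal{S}\in\mathrm{MST}(K^g)$; by Step 1, $\mathcal{S}\in\mathrm{MST}(G)$, so $\mathcal{S}$ and $\mathcal{T}$ have equal $W_E$-weight, and both equal their respective $K^g$-weights, giving $\mathrm{wt}_{K^g}(\mathcal{T})=\mathrm{wt}_{K^g}(\mathcal{S})=\min$, i.e. $\mathcal{T}\in\mathrm{MST}(K^g)$.

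\medskip
\noindent The main obstacle I anticipate is the bookkeeping in Steps 1–2 around edges $e=(v,w)\in E$ for which $g(v,w)\neq e$: such an edge belongs to $K^g$ with weight $W_{\bar E}(e)=d_g(v,w)$ possibly different from $W_E(e)$, yet \Cref{lemma:cycle_property_tree} (applied to the cycle $e\cup g(v,w)$ in $K^g$, on which $e$ is the strict maximum by weight-domination) shows $e$ is used by no MST of either graph, so it can be cleanly swapped out. Making this cleanup precise — that every MST representative may be assumed to avoid all such ``bad'' edges, after which $W_{\bar E}$ and $W_E$ agree on the remaining edges and the two minimum-weight problems become literally the same — is the one place the argument must be written carefully; the rest is the standard cycle/cut exchange property for minimum spanning trees.
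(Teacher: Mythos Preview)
Your proposal is correct and uses the same key idea as the paper: apply \Cref{lemma:cycle_property_tree} to show that every edge $(v,w)$ with $g(v,w)\neq(v,w)$ is a strict cycle-maximum on the cycle $(v,w)\cup g(v,w)$ and hence lies in no minimum spanning tree, so that all MSTs of both $G$ and $K^g$ use only ``good'' edges on which $W_E=W_{\bar E}$. The paper then finishes (its step~(iii)) by an explicit iterative edge-swap between a chosen $\mathcal{T}\in\mathrm{MST}(K^g)$ and a chosen $\mathcal{T}'\in\mathrm{MST}(G)$, tracking weights through each swap; you instead compare total weights directly once both problems are confined to the common good-edge subgraph, which is a mild streamlining of the same argument.

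Two points in your write-up need tightening. First, keep the two invocations of the cycle lemma separate: in $K^g$ the edge $(v,w)$ is the strict maximum because $W_{\bar E}((v,w))=\sum_{e'\in g(v,w)}W_E(e')>W_E(e'_k)=W_{\bar E}(e'_k)$ by positivity alone, whereas in $G$ it is the strict maximum because $W_E((v,w))>W_E(e')$ by weight-domination --- you conflate these in the final paragraph. Second, your Step~2 justification is off: that a swap ``would lower its $K^g$-weight'' does not by itself show that $\mathcal{T}\in\mathrm{MST}(G)$ avoids bad edges; the correct reason (which you do state in the obstacle paragraph) is that such an edge is a strict cycle-maximum in $G$, so \Cref{lemma:cycle_property_tree} applied in $G$ excludes it from every MST of $G$ directly. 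With that fix, and taking $\mathcal{S}\in\mathrm{MST}(G)$ in Step~1 so that no ``edge-swap cleanup'' is needed, the argument is complete.
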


\begin{proof}
$(\text{MST}(K^g) \subseteq \text{MST}(G))$. 
(i) First, we show that any minimum spanning tree of ${K^g}$, denoted by $\mathcal{T}$, is a subgraph of $G$ so it is also a spanning tree of $G$.
Suppose that there is an edge $e=(v,w) \in \bar{E}$ in $\mathcal{T} \setminus E$. 
Then $W_{\bar{E}}(e) = d(v,w)=\sum_{e' \in g(v,w)}W_E(e')$, where $g(v,w)$ is different from $e$ because $e$ is not in $G$.
Observe that $\tau=g(v,w)+e$ is a cycle.
By the consistency of $g$, for any edge $e'=(v_0,w_0)$ in $g(v,w)$ we know the path $g(v_0,w_0)$ is just the edge $(v_0,w_0)$, so $W_{\bar{E}}(e') = W_E(e')$.
This contradicts~\cref{lemma:cycle_property_tree} because we have
\(
    W_{\bar{E}}(e) = \sum_{e' \in g(v,w)}W_E(e') = \sum_{e' \in g(v,w)}W_{\bar{E}}(e') > W_{\bar{E}}(e')
\)
and $\tau$ is a cycle in $K^g$ with $e \in \mathcal{T}$.

(ii) Second, we show that $W_E = W_{\bar{E}}$ on $\mathcal{T}$.
Consider any edge $e=(v,w) \in \mathcal{T}$.
Suppose that $W_E(e) \neq  W_{\bar{E}}(e)$.
This means that there is a path $g(v,w) \neq e$ in $G$.
Observe that $\tau=g(v,w)+e$ is a cycle.
This contradicts~\cref{lemma:cycle_property_tree} 
for the same reason as in (i) because $\mathcal{T}$ is a minimum spanning tree of $K^g$.

(iii) Consider a minimum spanning tree $\mathcal{T}' \in \text{MST}(G)$. If $\mathcal{T}=\mathcal{T}'$, then the proposition holds. Suppose that $\mathcal{T} \neq \mathcal{T}'$, which means that there is an edge $e \in \mathcal{T} \setminus \mathcal{T}'$ and a unique path $p'$ in $\mathcal{T}'$ connecting the endpoints of $e$.
In the path $p'$, there is an edge $e' \in \mathcal{T}' \setminus \mathcal{T}$ because $e + p'$ is a cycle in $G$ and $\mathcal{T}$ is a tree.
By $(ii)$, we have $W_E(e) = W_{\bar{E}}(e)$.
Suppose that $W_E(e') \neq W_{\bar{E}}(e')$ and denote $e'=(v,w)$.
Then there is a path $g(v,w)$ such that $W_E(e') > W_E(e_i)$ for any $e_i \in g(v,w)$.
This contradicts~\cref{lemma:cycle_property_tree} because $e' \in \mathcal{T}' \in \text{MST}(G)$ so that $W_E(e') = W_{\bar{E}}(e')$.
Since $\mathcal{T}' \in \text{MST}(G)$, we have $W_E(e') \leq W_E(e)$. Otherwise, by replacing $e'$ with $e$ in the edge swap process, we have a contradiction as $\mathcal{T}' \in \text{MST}(G)$.
Similarly, since $\mathcal{T} \in \text{MST}(K^g)$, we have $W_{\bar{E}}(e') \geq W_{\bar{E}}(e)$.
To summarize, the following hold:
\begin{enumerate}
    \item $W_E(e) = W_{\bar{E}}(e)$ and $W_E(e') = W_{\bar{E}}(e')$.
    \item $W_E(e') \leq W_E(e)$ and $W_{\bar{E}}(e') \geq W_{\bar{E}}(e)$.
\end{enumerate}
Therefore, we conclude that $W_E(e) = W_E(e')$. If we now consider $\mathcal{T} \cup \{ e'\} - \{ e \}$ as a new $\mathcal{T}$, we can repeat the above process. Since this process will terminate after a finite number of steps, we ultimately achieve $\mathcal{T} = \mathcal{T}'$, thereby completing the proof.

$(\text{MST}(G) \subseteq \text{MST}(K^g))$. The proof can be demonstrated by almost the same method.
\end{proof}

\cref{prop:MST_same} implies that any weight-dominated path choice function $g$ always keeps $\text{MST}(G)$ invariant, for which the weight-dominated condition is essential. 
Otherwise, neither $\text{MST}(G)$ nor $\text{MST}(K^g)$ can be guaranteed to be larger. 
A counterexample is shown in~\cref{fig:counter_example} below.
\begin{figure}[h]
    \centering
    \includegraphics[width=0.9\textwidth]{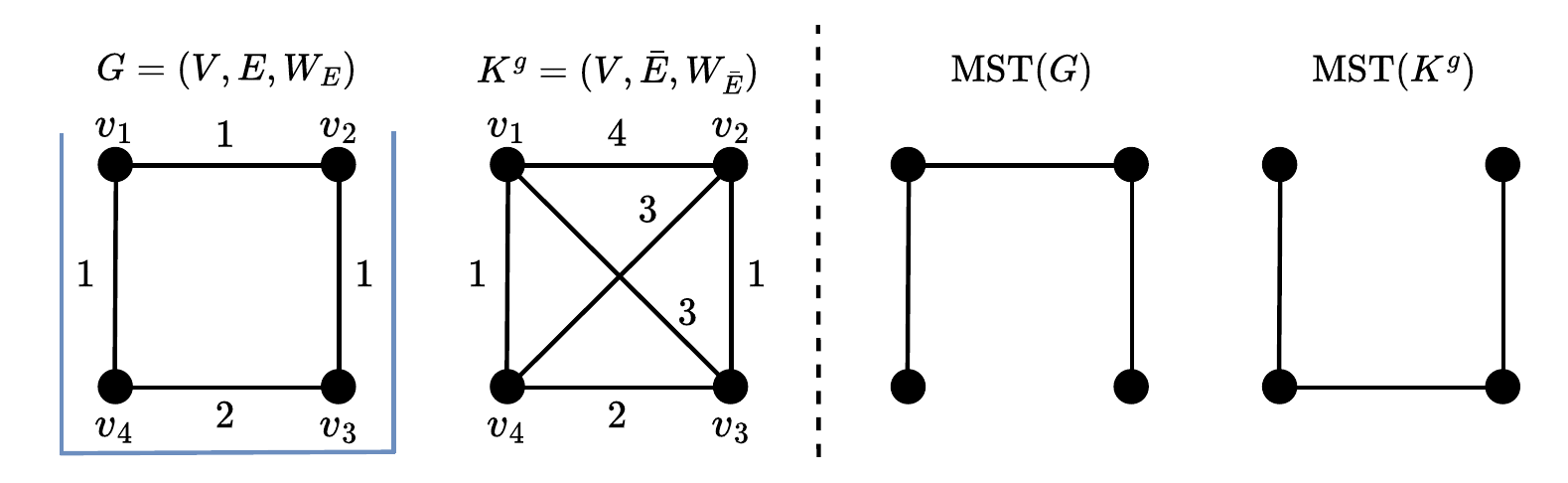} 
    \caption{A counterexample demonstrating that without the weight-dominated condition, neither $\text{MST}(G)$ nor $\text{MST}(K^g)$ can be guaranteed to be larger.}
    \label{fig:counter_example}
\end{figure}

In fact, the construction of a minimum spanning tree naturally arises from the standard algorithm discussed in~\cref{subsec:Persistent homology}.
\begin{lemma}\label{lemma:MST_standard_algorithm} 
 For the fixed order $\pi$ on the edge set $\bar{E}$ in $K^g$, consider the set of all $1$-simplices $\sigma_j$ such that $i = \text{low}_{\mathbf{R}}(j)$ for some $0$-simplex $\sigma_i$, denoted by $\mathcal{T}^{\pi}$.
 Then \[
  \textnormal{MST}(K^g) = \{ \mathcal{T}^{\pi} \mid  \pi \text{ is an order on edge set $\bar{E}$ in $K^g$}\}
 \]
\end{lemma}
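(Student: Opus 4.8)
The plan is to identify $\mathcal{T}^{\pi}$ with the spanning tree produced by Kruskal's algorithm on $K^g$ — edges scanned in order of increasing weight $W_{\bar E}=d_g$, ties broken by $\pi$ — and then invoke the two classical facts that (a) Kruskal's algorithm always outputs a minimum spanning tree, and (b) every minimum spanning tree is output by some tie-breaking. Here $\pi$ is understood as a simplex ordering from the filtration function (an ordering that refines the weight order on $\bar E$); this is exactly the hypothesis under which the statement can be true, since for a completely arbitrary edge order one would recover all spanning trees rather than just the minimum ones.

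The first step is to translate $\mathcal{T}^{\pi}$ into purely combinatorial terms. In $\mathbf{D}_{\pi}$ the column of a $1$-simplex $e=(v,w)$ has nonzero entries, over $\mathbb{Z}/2$, precisely in the two rows indexed by $v$ and $w$; hence $\text{low}_{\mathbf{R}}$ of such a column, when defined, is always a $0$-simplex, and in the reduction a $1$-simplex column is only ever altered by adding other $1$-simplex columns (so the higher simplices of $\mathbb{X}=\mathcal{P}(V)$ play no role, and the order chosen by $\pi$ among the $0$-simplices affects only \emph{which} vertex a negative edge is paired with, not the set $\mathcal{T}^{\pi}$). Consequently the reduction of the $1$-dimensional part of $\mathbf{D}_{\pi}$ is exactly the union–find bookkeeping of Kruskal's algorithm: scanning the edges of $\bar E$ in the order $\pi$, the column of $e=(v,w)$ reduces to zero exactly when $v$ and $w$ already lie in a common component of the subforest built from the previously scanned edges whose columns did not reduce to zero, and otherwise $e$ becomes a negative edge, i.e. $e\in\mathcal{T}^{\pi}$. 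It follows at once that $\mathcal{T}^{\pi}$ is acyclic and that for every $e\notin\mathcal{T}^{\pi}$ the endpoints of $e$ lie in one tree of $\mathcal{T}^{\pi}$; thus $\mathcal{T}^{\pi}$ is a maximal acyclic subgraph of the connected graph $K^g$, hence a spanning tree with $|\mathcal{T}^{\pi}|=|V|-1$.

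For the inclusion $\{\mathcal{T}^{\pi}\}\subseteq\text{MST}(K^g)$ I would use the cut property. When a negative edge $e=(v,w)$ is scanned and merges the current component $C\ni v$ with the one containing $w$, any edge of $\bar E$ crossing the cut $(C,V\setminus C)$ was either scanned earlier — impossible, as it would already have joined $v$ and $w$ (it was either accepted, forcing both endpoints into $C$, or rejected, which also forces them into the same component) — or scanned later, hence of weight $\ge W_{\bar E}(e)$; so $e$ is a lightest edge across that cut, a cut respected by the forest built so far. The standard induction ("the set of negative edges so far is contained in some minimum spanning tree") then shows $\mathcal{T}^{\pi}$ extends to a minimum spanning tree, and since $|\mathcal{T}^{\pi}|=|V|-1$ it equals one. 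For the reverse inclusion, given $\mathcal{T}\in\text{MST}(K^g)$ take $\pi$ to sort $\bar E$ by weight and, within each weight class, to list the edges of $\mathcal{T}$ before the edges not in $\mathcal{T}$. I claim every $e=(v,w)\in\mathcal{T}$ is accepted: otherwise, when $e$ is scanned, $v$ and $w$ are already joined by a path $P$ in the forest built so far, and since deleting $e$ splits $\mathcal{T}$ into parts $\mathcal{T}_v\ni v$, $\mathcal{T}_w\ni w$, the path $P$ contains an edge $e''$ crossing the cut $(\mathcal{T}_v,\mathcal{T}_w)$; then $e''\neq e$ and $e''\notin\mathcal{T}$ (the unique $\mathcal{T}$-edge across that cut is $e$). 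As $e''$ was scanned and accepted before $e$ we get $W_{\bar E}(e'')\le W_{\bar E}(e)$, with equality impossible because a non-$\mathcal{T}$ edge is ordered after every $\mathcal{T}$-edge of the same weight; hence $W_{\bar E}(e'')<W_{\bar E}(e)$ and $\mathcal{T}-e+e''$ is a spanning tree of strictly smaller total weight, contradicting $\mathcal{T}\in\text{MST}(K^g)$. So $\mathcal{T}\subseteq\mathcal{T}^{\pi}$, and comparing sizes ($|\mathcal{T}|=|V|-1=|\mathcal{T}^{\pi}|$) yields $\mathcal{T}=\mathcal{T}^{\pi}$.

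The only genuinely delicate point I anticipate is the first step: arguing carefully that the $\mathbf{R}=\mathbf{D}\mathbf{V}$ reduction, restricted to the $1$-simplex columns, is literally the component-merging process of Kruskal's algorithm, and that higher simplices and the choice of $0$-simplex order are irrelevant to the set $\mathcal{T}^{\pi}$. Once that dictionary is fixed, both inclusions are the textbook correctness arguments for Kruskal's algorithm, merely adapted to allow all tie-breakings $\pi$ rather than a single fixed one.
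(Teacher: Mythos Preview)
Your proposal is correct and follows essentially the same strategy as the paper: identify $\mathcal{T}^{\pi}$ with the output of Kruskal's algorithm run with tie-breaking $\pi$, then for the reverse inclusion choose $\pi$ to prefer the edges of a given $\mathcal{T}\in\text{MST}(K^g)$ within each weight class. The paper's proof is much terser (it simply asserts that the reduction coincides with the greedy MST construction and that the chosen $\pi$ yields $\mathcal{T}=\mathcal{T}^{\pi}$), whereas you supply the cut-property argument and the edge-swap contradiction explicitly; but the underlying route is the same.
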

\begin{proof}
    $(\supseteq)$
    The process of sequentially constructing a minimum spanning tree matches precisely the process of incrementally calculating \(\text{low}_{\mathbf{R}}\). 
    To explain in more detail, for every vertex in $K^g$, each $0$-simplex (vertex) is connected to a $1$-simplex (edge) causing its death. According to the definition of $\text{low}_{\mathbf{R}}$, no cycles can form hence it is a spanning tree. Since $\text{low}_{\mathbf{R}}$ selects edges in order from the lowest value, it becomes the minimum spanning tree.
    Therefore, for any order $\pi$ on the edge set $\bar{E}$ in $K^g$, $\mathcal{T}^{\pi}$ is an element in $\text{MST}(K^g)$.

    $(\subseteq)$
    Consider an arbitrary element $\mathcal{T} \in \text{MST}(K^g)$. Let $\pi$ be an order on $\bar{E}$ such that, for any given weights on the edges, if the weights are the same, the edges in $\mathcal{T}$ are preferred over those in $K^g \setminus \mathcal{T}$. 
    When computing the persistent homology of $K^g$, even if the order of the $1$-simplices (edges) corresponding to the indices of the matrix in the $\mathbf{R} = \mathbf{D}\mathbf{V}$ algorithm is rearranged according to $\pi$, the result of the persistent homology computation remains unchanged (permutation invariant). In this case, $\mathcal{T} = \mathcal{T}^{\pi}$.
\end{proof}

\begin{proposition}\label{prop:MST_standard_algorithm_same}
    Let $G=(V,E,W_E)$ be a connected weighted graph. Consider any weight-dominated choice functions $g_1$ and $g_2$ and their completions $K^{g_1}$ and $K^{g_2}$.
    Then there exists an order $\pi$ in $\bar{E}$ such that their minimum spanning trees on $K^{g_1}$ and $K^{g_2}$ induced by the standard algorithm are exactly the same. 
\end{proposition}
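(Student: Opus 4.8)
The plan is to reduce both completions to a single common minimum spanning tree of $G$ and then produce one tie-breaking order that forces the standard algorithm to reconstruct exactly that tree on each of $K^{g_1}$ and $K^{g_2}$. First I would apply \Cref{prop:MST_same}: since $g_1$ and $g_2$ are weight-dominated, $\text{MST}(K^{g_1}) = \text{MST}(G) = \text{MST}(K^{g_2})$ as weighted graphs. Fix a tree $\mathcal{T} \in \text{MST}(G)$; it is then simultaneously a minimum spanning tree of $K^{g_1}$ and of $K^{g_2}$. (By part (ii) of the proof of \Cref{prop:MST_same} one also has $d_{g_1} = d_{g_2} = W_E$ on the edges of $\mathcal{T}$, though this is a convenience rather than a necessity.)

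Next I would recall the mechanism behind the $(\subseteq)$ inclusion in the proof of \Cref{lemma:MST_standard_algorithm}: if $\pi$ is an ordering of $\bar E$ that, within every weight class, lists the edges of a fixed minimum spanning tree before all other edges, then the spanning tree $\mathcal{T}^{\pi}$ returned by the $\mathbf R = \mathbf D \mathbf V$ algorithm is precisely that tree. This is a restatement of Kruskal's algorithm and, if needed, can be checked directly: the tree edges of a given weight are processed first and never close a cycle, so each is paired with a vertex; and each non-tree edge $e$ of weight $c$ closes a cycle, because by the cycle characterization of a minimum spanning tree the $\mathcal{T}$-path between the endpoints of $e$ uses only edges of weight $\le c$, all of which have already been added. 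The key observation is that the condition on $\pi$ here --- namely that the edges of $\mathcal{T}$ come before the edges of $\bar E \setminus \mathcal{T}$ on ties --- makes no reference to the actual edge weights.

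Therefore I would take $\pi$ to be any linear order on $\bar E$ in which all edges of $\mathcal{T}$ precede every edge not in $\mathcal{T}$. Since $K^{g_1}$ and $K^{g_2}$ have the same vertex set and hence the same edge set $\bar E$, this is literally one and the same order for both; and relative to either weighting $d_{g_1}$ or $d_{g_2}$ it lists $\mathcal{T}$'s edges first within each weight class. Applying the mechanism of the previous paragraph to $K^{g_1}$ (in which $\mathcal{T}$ is an MST) yields $\mathcal{T}^{\pi}_{K^{g_1}} = \mathcal{T}$, and applying it to $K^{g_2}$ yields $\mathcal{T}^{\pi}_{K^{g_2}} = \mathcal{T}$, so the two spanning trees induced by the standard algorithm under $\pi$ coincide, as required. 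The one point that needs care --- and the reason the statement is not immediate --- is that $K^{g_1}$ and $K^{g_2}$ carry different weightings, so a priori a good tie-breaker for one need not work for the other; this is resolved precisely because the needed property of $\pi$ is weight-agnostic once $\mathcal{T}$ is fixed, combined with \Cref{prop:MST_same}, which guarantees that a single $\mathcal{T}$ is optimal for both completions.
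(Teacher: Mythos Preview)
Your proof is correct and follows essentially the same route as the paper: invoke \Cref{prop:MST_same} to obtain a common minimum spanning tree $\mathcal{T}$ of both completions, and then use the $(\subseteq)$ direction of \Cref{lemma:MST_standard_algorithm} to manufacture a tie-breaking order $\pi$ that forces the standard algorithm to output exactly $\mathcal{T}$. If anything you are more explicit than the paper on the one delicate point---that the ``put edges of $\mathcal{T}$ first on ties'' condition on $\pi$ is weight-agnostic, so a single $\pi$ works simultaneously for $K^{g_1}$ and $K^{g_2}$---which the paper's proof leaves implicit.
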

\begin{proof}
    First, since $g_1$ and $g_2$ are weight-dominated, by~\cref{prop:MST_same}, we have $\text{MST}(K^{g_1}) = \text{MST}(K^{g_2}) = \text{MST}(G)$. 
    Now, let the minimum spanning trees obtained by the standard algorithm for $K^{g_1}$ and $K^{g_2}$ be denoted as $\mathcal{T}_1$ and $\mathcal{T}_2$, respectively. 
    Then, by~\cref{lemma:MST_standard_algorithm}, there exists an order $\pi$ on $\bar{E}$ such that $\mathcal{T}_1 = \mathcal{T}_2^{\pi}$.
\end{proof}

\begin{proposition}\label{prop:birth_edge_inclusion}
Let $G=(V,E,W_E)$ be a connected weighted graph. 
Consider any two path-representable distances $d_1$ and $d_2$ from two cost-dominated path choice functions $g_1$ and $g_2$.
Let $\mathcal{B}_1$ and $\mathcal{B}_2$ be the set of all birth edges of $\textbf{bcd}_1(d_1)$ and $\textbf{bcd}_1(d_2)$.
If $d_2(v,w) \le d_1(v,w)$ for any vertices $v$ and $w$, then $\mathcal{B}_2 \subseteq \mathcal{B}_1$ for some simplex ordering $\pi$ on $\mathbb{X}=\mathcal{P}(V)$.
\end{proposition}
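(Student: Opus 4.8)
The plan is to compare the persistent homology computations of $K^{g_1}$ and $K^{g_2}$ through a single, carefully chosen simplex ordering $\pi$, and to track which edges become birth edges of $1$-cycles. First I would observe that by~\Cref{prop1} and~\Cref{prop:birth_edge_weigh_fix}, every birth edge in $\mathcal{B}_1$ or $\mathcal{B}_2$ is an actual edge $e=(v,w)\in E$ whose filtration value equals $W_E(e)$ regardless of which path-representable distance we use; this means the $1$-skeleton filtration restricted to $E$ is identical for $d_1$ and $d_2$ as far as birth edges are concerned, and the only difference between the two distances shows up on the ``extra'' edges $\bar E\setminus E$, where $d_2(v,w)\le d_1(v,w)$. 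Next I would invoke~\Cref{prop:MST_standard_algorithm_same} (applicable since cost-dominated implies weight-dominated) to fix an order $\pi$ on $\bar E$ for which the minimum spanning trees $\mathcal{T}_1\subseteq K^{g_1}$ and $\mathcal{T}_2\subseteq K^{g_2}$ produced by the standard algorithm coincide; call this common tree $\mathcal{T}$. I would further refine $\pi$ so that, among edges sharing the same filtration value, tree edges and then genuine graph edges in $E$ come first, which is permissible by permutation invariance.

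The heart of the argument is a characterization of birth edges in terms of $\mathcal{T}$: an edge $e=(v,w)$ is a birth $1$-simplex precisely when, at the moment $e$ enters the filtration, its two endpoints already lie in the same connected component of the subcomplex built so far (so $e$ closes a loop rather than merging components), and moreover $e$ is the \emph{first} such loop-closing edge in its homology class — equivalently $e\notin\mathcal{T}$ and $e$ is not killed by being chosen as a $\text{low}_{\mathbf{R}}$ index for a $0$-simplex. So I would take any $e=(v,w)\in\mathcal{B}_2$. Since it is a birth edge of $\textbf{bcd}_1(d_2)$, \Cref{prop:birth_edge_weigh_fix} gives $e\in E$ with $W_E(e)=d_2(v,w)$, and by~\Cref{prop:birth_edge_weigh_fix} applied in the $d_1$ computation the \emph{same} edge $e\in E$ has filtration value $W_E(e)=d_1(v,w)$ there too — so $e$ carries the identical birth value $W_E(e)$ in both filtrations. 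The remaining task is to show $e$ is a birth edge for $d_1$ as well: I would use~\Cref{lemma1} contrapositively. Because $e\in\mathcal{B}_2$, \Cref{lemma1} tells us there is no vertex $v_0$ with $d_2(v,v_0)<W_E(e)$ and $d_2(w,v_0)<W_E(e)$; since $d_1\ge d_2$ pointwise, a fortiori there is no vertex $v_0$ with $d_1(v,v_0)<W_E(e)$ and $d_1(w,v_0)<W_E(e)$, so the obstruction in~\Cref{lemma1} is absent for $d_1$ — but this alone only rules out instantaneous death. To upgrade ``not instantly killed'' to ``genuinely a birth edge,'' I would show that when $e$ enters the $d_1$-filtration its endpoints $v,w$ already lie in the same component: this is where the shared MST $\mathcal{T}$ does the work. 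Since $e\in\mathcal{B}_2$, $e$ closes a loop in the $d_2$-filtration, so $e\notin\mathcal{T}_2=\mathcal{T}$; the unique $\mathcal{T}$-path $p$ between $v$ and $w$ consists of edges each of weight (i.e. filtration value) $\le W_E(e)$ — here one needs the cost-dominated hypothesis and~\Cref{lemma:cycle_property_tree} to rule out the degenerate tie that would let $e$ instead of a path edge be the one killing a component. Hence in the $d_1$-filtration the same path $p$ is present by the time $e$ arrives, so $v$ and $w$ are already connected, $e$ closes a loop, and combined with the absence of the~\Cref{lemma1} obstruction, $e$ is a birth edge of $\textbf{bcd}_1(d_1)$, i.e. $e\in\mathcal{B}_1$.

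The main obstacle I anticipate is the treatment of ties in filtration value — edges of $\bar E$ whose $d_2$-value drops strictly below their $d_1$-value versus those that stay equal, and the bookkeeping needed to guarantee a single ordering $\pi$ that simultaneously realizes the common MST \emph{and} processes $E$-edges before $(\bar E\setminus E)$-edges \emph{and} is consistent between the two computations. Handling this cleanly will likely require a short lemma stating that, for a cost-dominated $g$, among all edges of $\bar E$ of a given weight value one may always schedule the true graph edges $E$ first without changing $\textbf{bcd}_1$ (permutation invariance plus~\Cref{lemma:cycle_property_tree}), after which the structural comparison above goes through verbatim. A secondary subtlety is making precise the claim ``$e\notin\mathcal{T}$ and endpoints already connected $\Rightarrow$ $e$ is a birth edge'': this should follow from the $\mathbf{R}=\mathbf{D}\mathbf{V}$ description in~\Cref{subsec:Persistent homology} together with the fact that $\mathcal{T}$ exhausts exactly the $\text{low}_{\mathbf{R}}$-killers of $0$-simplices, so any non-tree edge whose column is nonzero must have its $\text{low}_{\mathbf{R}}$ in a $1$-simplex row, making it a birth edge unless killed — and killing is exactly what~\Cref{lemma1} forbids here.
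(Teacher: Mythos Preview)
Your setup is right and matches the paper: you correctly invoke \Cref{prop:MST_standard_algorithm_same} to get a common minimum spanning tree $\mathcal{T}$, and your argument that $e\notin\mathcal{T}$ together with the $\mathcal{T}$-path $p$ from $v$ to $w$ forces the $e$-column of $\mathbf{R}_1$ to reduce to zero is essentially the paper's part~(i). However, there are two problems in the second half.

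First, a circularity: you write ``by~\Cref{prop:birth_edge_weigh_fix} applied in the $d_1$ computation the same edge $e\in E$ has filtration value $W_E(e)=d_1(v,w)$ there too.'' But \Cref{prop:birth_edge_weigh_fix} presupposes $e\in\mathcal{B}_1$, which is exactly what you are trying to prove. The paper instead obtains $d_1(v,w)=W_E(e)$ from the cost-dominated hypothesis directly: cost-domination gives $d_1(v,w)\le W_E(e)$, while $d_2\le d_1$ and $d_2(v,w)=W_E(e)$ (this last equality \emph{is} legitimate via \Cref{prop:birth_edge_weigh_fix} since $e\in\mathcal{B}_2$) force equality.

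Second, and more seriously, your passage from ``the \Cref{lemma1} obstruction is absent for $d_1$'' to ``$e$ is a birth edge for $d_1$'' is the converse of \Cref{lemma1}, which is not proved anywhere. \Cref{lemma1} says only that the existence of such a $v_0$ \emph{prevents} $e$ from being a birth edge; it does not say that the absence of such a $v_0$ \emph{guarantees} it. The $2$-simplex $\sigma_j$ satisfying $\text{low}_{\mathbf{R}_1}(j)=i$ need not contain $e$ as a face after column reduction, and even among triangles containing $e$ the non-strict case $d_1(v,v_0)=d_1(v,w)$ is not excluded. The paper closes this gap by an entirely different mechanism: since $d_2\le d_1$ one has $\mathbb{X}^1_t\subseteq\mathbb{X}^2_t$ for every $t$, so the cycle born at $e$, which dies in the $d_2$-filtration at time $f_2(\sigma_k)>f_2(e)=f_1(e)$, cannot die earlier in the coarser $d_1$-filtration; hence $f_1(\sigma_j)\ge f_2(\sigma_k)>f_1(e)$. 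This filtration-inclusion comparison of death times is the missing idea in your argument.
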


\begin{proof} 
  For a birth edge $e = \sigma_i \in \mathcal{B}_2$, suppose that we have a pair of simplices $(\sigma_i,\sigma_k)$ such that $\text{low}_{\mathbf{R}_2}(k)=i$ for some $\sigma_k$, where $\mathbf{R}_2=\mathbf{D}_2\mathbf{V}_2$ in the calculation of $\textbf{bcd}_1(d_2)$.
  Let $f_2$ be a real-valued function induced by $d_2$ as in~\cref{subsec:Persistent homology}. 
  Since $e$ is a birth edge, $f_2(\sigma_i) < f_2(\sigma_k)$.

  (i) First, we show that the $e$-column of the reduced $R_1$ is zero.
  From $e \in \mathcal{B}_2$, we know that the $e$-column of $\mathbf{R}_2$ is zero.
  The fact that the $e$-column of $\mathbf{R}_2$ is a zero column means that there are column indices $j_1$, $\ldots$, $j_m$ corresponding to edges $e_{j_1}, \ldots, e_{j_m}$ such that $e + \sum_{l=1}^m e_{j_l}$ is a cycle and there are $0$-simplices $\sigma_{i_{j_k}}$ such that $i_{j_k} =\text{low}_{\mathbf{R}_2}(j_k)$.
  By~\cref{prop:MST_standard_algorithm_same}, we can assume that the minimum spanning trees $\mathcal{T}_1$ and $\mathcal{T}_2$ for $\mathbf{R}_1$ and $\mathbf{R}_2$ in the standard algorithm are exactly the same for some fixed order $\pi$ on edge set $\bar{E}$.
  Therefore, we know that $i_{j_k} = \text{low}_{\mathbf{R}_1}(j_k)$ for all $k$. Thus the $e$-column of $\mathbf{R}_1$ is zero due to the column indices $j_1, \ldots, j_m$ during the column operations in the standard algorithm.

  (ii) Next, we show that if the $1$-cycle $\tau$ corresponding to the birth edge $e=\sigma_i$ is terminated by the $2$-simplex $\sigma_j$ in $\mathbf{R}_1$, then $f_1(\sigma_i) < f_1(\sigma_j)$.
  Consider a pair of simplices  $(\sigma_i,\sigma_j)$ which is $\text{low}_{\mathbf{R}_1}(j)=i$.
  For any $t \in \mathbb{R}$, define $\mathbb{X}^2_t = {f_2}^{-1}((-\infty,t])$ and $\mathbb{X}^1_t = {f_1}^{-1}((-\infty,t])$.
  As $f_2 \le f_1 $ on $\mathbb{X}$, we know that $\mathbb{X}^1_t \subseteq \mathbb{X}^2_t $ for any $t$.
  In filtration on $\mathbb{X}^1$, the $1$-cycle $\tau$ is terminated by $\sigma_j$ at a death value $f_1(\sigma_j)$. 
  That is, $\tau$ is zero homologous in $\mathbb{X}^1_{f_1(\sigma_j)}$.
  However, since $\mathbb{X}^1_{f_1(\sigma_j)} \subseteq \mathbb{X}^2_{f_1(\sigma_j)}$, $\tau$ is also terminated at $f_1(\sigma_j)$ in filtration $\mathbb{X}^2$. 
  Therefore, we obtain $f_2(\sigma_k) \le f_1(\sigma_j)$ because the value $f_2(\sigma_k)$ is a death value of $\tau$, which means the first value to die in filtration $\mathbb{X}^2$. 
  
  Now, by~\cref{prop:birth_edge_weigh_fix}, we know that $W_E(e) = W_{\bar{E_2}}(e) = f_2(e)$ since $e \in \mathcal{B}_2$. 
  Then, since $g_1$ is cost-dominated, we have  \(f_1(e)=W_{\bar{E}_1}(e) \le W_E(e)=f_2(e)\).
  By $d_2 \leq d_1$, we know that $f_2(e) \le f_1(e)$ and thus $f_2(e)=f_1(e)$.
  Since $e \in \mathcal{B}_2$, we have $f_2(e) < f_2(\sigma_k)$.
  Therefore, we obtain $f_1(e)= f_2(e) < f_2(\sigma_k) \le f_1(\sigma_j) $. 

  Finally, by (i) and (ii), an edge $e$ is a birth edge in $\textbf{bcd}_1(d_1)$. This completes the proof.
\end{proof}

The following is the proof of the main theorem.
\mytheorem*

\begin{proof}
    We show that we can construct an injection from $\textbf{bcd}_1(d_2)$ to $\textbf{bcd}_1(d_1)$.
    By~\cref{prop:MST_standard_algorithm_same}, we can take the order $\pi$ in the edge set $\bar{E}$ so that the minimum spanning trees $\mathcal{T}_1$ and $\mathcal{T}_2$ for $K^{g_1}$ and $K^{g_2}$ in the standard algorithm are exactly the same.
    For fixed order $\pi$, we can express any points in $\textbf{bcd}_1(d_2)$ as $[\beta,\delta]_{\sigma_i}$ corresponding to $(\sigma_i,\sigma_k)$ with $i =\text{low}_{\mathbf{R}_2}(k)$.
    Here, $\sigma_i$ is a birth simplex of dimension $1$ for the point $[\beta,\delta]$.
    By~\cref{prop:birth_edge_inclusion}, $\sigma_i$ is also a birth simplex of dimension $1$ in $\textbf{bcd}_1(d_1)$, so there is a barcode $[\beta',\delta']_{\sigma_i}$ corresponding to $(\sigma_i,\sigma_j)$ such that $i = \text{low}_{\mathbf{R}_1}(j)$.
    We know that $\beta=\beta'$ because by~\cref{prop:birth_edge_weigh_fix}, we have 
    \[
     \beta=f_2(\sigma_i)=d_2(v,w) = W_E(\sigma_i)= d_1(v,w) = f_1(\sigma_i) =\beta',
    \]
    where $\sigma_i = (v,w)$.
    Also, in (ii) in the proof of~\cref{prop:birth_edge_inclusion}, we already show that $\delta = f_2(\sigma_k) \le f_1(\sigma_j) = \delta'$. 
    Now, define a function $\varphi$ as $\varphi([\beta, \delta]_{\sigma_i}) = [\beta,\delta']_{\sigma_i}$ for any $[\beta, \delta] \in \textbf{bcd}_1(d_2)$.
    This map is a well-defined injective function because every point in $\textbf{bcd}_1$ is uniquely determined by the birth simplex $\sigma_i$ of dimension $1$.
    This completes the proof.
\end{proof}
\section{Analysis of dimensional variations of the main theorem}\label{sec:Analysis of dimensional variations of the main theorem}

In this section, we briefly discuss whether~\cref{Thm: main theorem_general} can be extended to arbitrary $k$-dimensional persistence barcodes. 
Will it hold for the higher dimensions? Generally, no.
Although we were unable to provide a proof for the general case of $k$, 
we have found counterexamples experimentally demonstrating that the theorem does not hold for \( 2 \le k \le 5\).

\begin{statement}\label{statement: high_dimension}
  Let $G=(V,E,W_E)$ be a connected weighted graph. 
  Consider two cost-dominated path choice functions $g_1$ and $g_2$ on $G$, and their two path-representable distances $d_1$ and $d_2$ along $g_1$ and $g_2$.
  For any dimension $k$ if $d_2(v,w) \le d_1(v,w)$ for any vertices $v,w$, then there exists an injective function $\varphi : \textbf{bcd}_k(d_2) \rightarrow \textbf{bcd}_k(d_1)$.
\end{statement}

\begin{table}[!ht]
\centering
\caption{True or False for high dimensional cases \(k\) for~\cref{statement: high_dimension}}
\begin{tabular}{l|c|c|c|c|c|c|}
\hline
\textbf{Dimension k}& \textbf{0}& \textbf{1}& \textbf{2} & \textbf{3} & \textbf{4} & \textbf{5}\\
\hline
\textbf{(T)rue or (F)alse} & T & T & F & F & F & F \\
\hline
\end{tabular}
\label{tab:detect_ratio}
\begin{minipage}{0.9\textwidth}
    \footnotesize
    \vspace{2mm} 
    Note: This table shows the results of~\cref{statement: high_dimension} for different dimensional cases. The table indicates whether the theorem holds (True) or does not hold (False) for each dimension.
\end{minipage}
\end{table}

In the case where \( k = 0 \) in~\cref{statement: high_dimension}, \(\textbf{bcd}_k(d_{\text{edge}})\) and \(\textbf{bcd}_k(d_{\text{weight}})\) become exactly the same, therefore satisfying the condition.
To prove that~\cref{statement: high_dimension} is false for \( 2 \le k \le 5\), we performed experiments using Python code. Specifically, knowing that \(d_{\text{edge}} \geq d_{\text{weight}}\), we sought to identify the weighted graphs \(G = (V, E, W_E)\) where the condition \(\lvert \textbf{bcd}_k(d_{\text{edge}}) \rvert < \lvert \textbf{bcd}_k(d_{\text{weight}}) \rvert\) is satisfied for dimension \(k\). All codes used in this study are available at \url{https://github.com/AI-hew-math/embedding}.

We conjecture that \cref{statement: high_dimension} is false for any dimension $k \geq 2$ and that a generalized definition of distance with the extended weight, and cost-dominated concepts might exist for higher dimensions, leading to a similar injection relationship as demonstrated for the cases $k=0, 1$.
\section{Total persistence difference and stability theorem}
\label{sec:Data Analysis}

Let $G=(V,E,W_E)$ be a connected weighted graph.
Given a path–representable distance $d$ on $V$, the associated
Vietoris–Rips filtration yields a $1$–dimensional persistence barcode
$\textbf{bcd}_1(G,d)$, which is a multiset of intervals
$[\beta,\delta] \subset \mathbb{R}$.
In this section we compare two such barcodes obtained from
two comparable distances on the same graph and introduce a quantity
that measures how the $1$–dimensional structure changes
as the distance varies.

Suppose $d_i \le d_j$ are cost–dominated path–representable distances on $G$.
By \cref{Thm: main theorem_general}, the $1$–dimensional barcodes
$\textbf{bcd}_1(G,d_i)$ and $\textbf{bcd}_1(G,d_j)$ can be indexed by
the birth edges in $E$:
each interval is associated with the edge that first creates the
corresponding nontrivial $1$–cycle.
If an edge $\sigma$ appears as a birth edge in both barcodes, then their birth and death values satisfy
\[
  \beta^i_\sigma = \beta^j_\sigma
  \quad\text{and}\quad
  \delta^i_\sigma \le \delta^j_\sigma.
\]
If a birth edge $\sigma$ appears only in $\textbf{bcd}_1(G,d_j)$,
then $\textbf{bcd}_1(G,d_j)$ contains an interval
$[\beta^j_\sigma,\delta^j_\sigma]$ for which there is no corresponding
interval in $\textbf{bcd}_1(G,d_i)$.
This edge–indexed viewpoint suggests a natural way to quantify the difference between $\textbf{bcd}_1(G,d_i)$ and $\textbf{bcd}_1(G,d_j)$.

\begin{definition}\label{def:tpd}
Let $G=(V,E,W_E)$ be a connected weighted graph and
$d_i \le d_j$ be cost–dominated path–representable distances on $G$.
Let $\mathcal{B}_i$ and $\mathcal{B}_j$ denote the sets of birth edges
in the $1$–dimensional barcodes $\textbf{bcd}_1(G,d_i)$
and $\textbf{bcd}_1(G,d_j)$, respectively.
For each birth edge $\sigma$ we write
$(\beta^i_\sigma,\delta^i_\sigma)$ and
$(\beta^j_\sigma,\delta^j_\sigma)$ for the birth and death values
in $\textbf{bcd}_1(G,d_i)$ and $\textbf{bcd}_1(G,d_j)$ whenever they exist.
For $p\ge1$ we define the \emph{total persistence difference} by
\[
\mathrm{Diff}_p\!\left(\textbf{bcd}_1(G,d_j),
                      \textbf{bcd}_1(G,d_i)\right)
=
\Bigg(
\sum_{\sigma\in\mathcal{B}_i}
(\delta_\sigma^j - \delta_\sigma^i)^p
+
\sum_{\sigma\in\mathcal{B}_j\setminus\mathcal{B}_i}
(\delta_\sigma^j - \beta_\sigma^j)^p
\Bigg)^{1/p}.
\]
\end{definition}

The first term sums up, in an $\ell^p$–fashion, the increase in death
values for those intervals that appear in both barcodes and are indexed
by the same birth edge $\sigma\in\mathcal{B}_i$.
The second term accounts for intervals that appear only in
$\textbf{bcd}_1(G,d_j)$, i.e.\ those whose birth edges lie in
$\mathcal{B}_j\setminus\mathcal{B}_i$, and measures their persistence
$\delta^j_\sigma - \beta^j_\sigma$.
Thus $\mathrm{Diff}_p$ quantifies how much the $1$–dimensional barcode extends when the associated distance increases from $d_i$ to $d_j$, using only the inclusion-based indexing provided by the graph.
A schematic illustration is shown in \cref{fig:tpd_demo}.

\begin{figure}[h]
    \centering
    \includegraphics[width=0.45\textwidth]{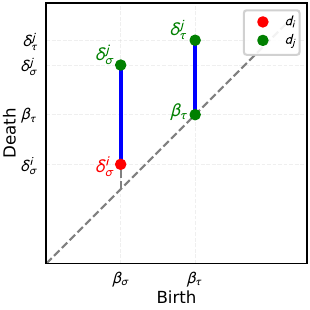}
    \caption{Illustration of the total persistence difference.
    The vertical blue segments represent the quantities used in
    \cref{def:tpd}.
    On the left, an interval with the same birth value
    $\beta_{\sigma}$ for $d_i$ (red) and $d_j$ (green) contributes
    $\delta^j_{\sigma}-\delta^i_{\sigma}$.
    On the right, an interval that appears only for $d_j$, with birth
    value $\beta^j_{\tau}$ and death value $\delta^j_{\tau}$, contributes
    $\delta^j_{\tau}-\beta^j_{\tau}$.
    The value $\mathrm{Diff}_p$ is obtained by taking the $\ell^p$-norm
    of all such contributions.}
    \label{fig:tpd_demo}
\end{figure}
Classical distances such as the bottleneck or Wasserstein distance compare persistence barcodes by selecting a matching between points in the two barcodes and optimizing a transport cost.
In the present setting, the barcodes are generated from the same graph equipped with two comparable distances, and there is a natural way to relate intervals via their common birth edges.
The quantity $\mathrm{Diff}_p$ is defined directly in terms of this inclusion-based correspondence.

\subsection{Stability Theorem}

We now provide a Lipschitz stability bound for $\mathrm{Diff}_p$ with respect to pointwise perturbations of the underlying distances.

\begin{theorem}\label{Thm:tpd-stability}
Let $G=(V,E,W_E)$ be a connected weighted graph and
$d_i \le d_j$ be cost–dominated path–representable distances on $G$.
Then
\[
\mathrm{Diff}_p\!\left(\textbf{bcd}_1(G,d_j),
                      \textbf{bcd}_1(G,d_i)\right)
\;\le\;
\gamma\, \lVert d_j - d_i \rVert_\infty,
\]
where
\[
\gamma = (|E|-|V|+1)^{1/p} \quad \text{and} \quad 
\lVert d_j - d_i \rVert_\infty
:= \max_{v,w\in V}\bigl(d_j(v,w) - d_i(v,w)\bigr).
\]
\end{theorem}

\begin{proof}
Let $f_i$ and $f_j$ be the filtration functions associated with
$d_i$ and $d_j$ as in \cref{subsec:Persistent homology}.
Consider the complete graphs obtained from these distances and fix
an ordering of edges so that the minimum spanning tree produced by the
standard algorithm is the same in both cases; this is possible by
\cref{prop:MST_standard_algorithm_same}.
Every nontrivial $1$–cycle then arises when an edge not belonging to
this tree is added, and birth edges are contained in $E$.

(i) Let $\sigma\in\mathcal{B}_i$.
By construction, the same edge $\sigma$ also becomes a birth edge
in $\textbf{bcd}_1(G,d_j)$, and the corresponding death values satisfy
$\delta^i_\sigma \le \delta^j_\sigma$.
If we had
\[
f_i^{-1}((-\infty,\delta^i_\sigma])
\;\subseteq\;
f_j^{-1}((-\infty,\delta^j_\sigma - \varepsilon])
\]
for some $\varepsilon>0$, then the $1$–cycle created by $\sigma$
would already be trivial at filtration value $\delta^j_\sigma-\varepsilon$
for $d_j$, contradicting the fact that its death value under $d_j$
is $\delta^j_\sigma$.
Hence there exists a pair $(v_\sigma,w_\sigma)\in V\times V$ such that
\[
d_i(v_\sigma,w_\sigma) \le \delta^i_\sigma
\quad\text{and}\quad
d_j(v_\sigma,w_\sigma) \ge \delta^j_\sigma.
\]
It follows that
\[
\delta^j_\sigma - \delta^i_\sigma
\;\le\;
d_j(v_\sigma,w_\sigma) - d_i(v_\sigma,w_\sigma)
\;\le\;
\lVert d_j - d_i \rVert_\infty.
\]

(ii) Now let $\sigma\in\mathcal{B}_j\setminus\mathcal{B}_i$.
In this case, the edge $\sigma$ creates a nontrivial $1$–cycle in the
filtration for $d_j$, but the corresponding cycle is trivial in the
filtration for $d_i$ at the same values.
If we had
\[
f_i^{-1}((-\infty,\beta^j_\sigma])
\;\subseteq\;
f_j^{-1}((-\infty,\delta^j_\sigma - \varepsilon])
\]
for some $\varepsilon>0$, then the cycle would already be trivial at
$\delta^j_\sigma - \varepsilon$ under $d_j$, again contradicting the
definition of $\delta^j_\sigma$.
Thus there exists $(v_\sigma,w_\sigma)\in V\times V$ such that
\[
d_i(v_\sigma,w_\sigma) \le \beta^j_\sigma
\quad\text{and}\quad
d_j(v_\sigma,w_\sigma) \ge \delta^j_\sigma,
\]
and consequently
\[
\delta^j_\sigma - \beta^j_\sigma
\;\le\;
d_j(v_\sigma,w_\sigma) - d_i(v_\sigma,w_\sigma)
\;\le\;
\lVert d_j - d_i \rVert_\infty.
\]

(iii) Finally, the number of birth edges in $\mathcal{B}_j$
is bounded by $|E|-(|V|-1)$.
Indeed, each birth edge corresponds to an edge of $G$ that does not
belong to the minimum spanning tree, and a spanning tree has $|V|-1$
edges; see \cref{prop1}.
Therefore the total number of terms in
\cref{def:tpd} is at most $|E|-|V|+1$.
Since each term in the sums defining $\mathrm{Diff}_p$ is bounded
by $\lVert d_j - d_i \rVert_\infty$, taking the $\ell^p$–norm gives
\[
\mathrm{Diff}_p\!\left(\textbf{bcd}_1(G,d_j),
                      \textbf{bcd}_1(G,d_i)\right)
\;\le\;
(|E|-|V|+1)^{1/p} \,\lVert d_j - d_i \rVert_\infty,
\]
which is the desired bound.
\end{proof}

The constant $\gamma$ in \cref{Thm:tpd-stability} depends only on the combinatorial structure of $G$ through $|V|$ and $|E|$.
The stability bound expresses that the total persistence difference cannot grow faster than linearly with respect to the uniform deviation
between the two distances.
\section{Experiments and Applications}
\label{sec:Applications}
In this section we study how the total persistence difference defined
in \cref{def:tpd} behaves on a real-world time-varying network.
Our primary goal is to compare different persistence-based distances
on the \emph{same} temporal graph and to examine how they relate to
classical graph statistics, rather than to perform a detailed
application-specific analysis of the underlying data.
We use the SNAP EU Research Institution E-Mail dataset as a running
example.\footnote{\url{https://snap.stanford.edu/data/email-Eu-core-temporal.html}}
This dataset records e-mail exchanges between members of a European
research institution over a period of several years.
Following the convention in earlier work~\cite{hajij2018visual}, we
restrict attention to the first $525$ days, after which the published
record becomes extremely sparse and contains long intervals with no
activity.
The same dataset and time range are used in~\cite{hajij2018visual},
so adopting this setting allows for a direct comparison
between our method and their PH-based construction.

\subsection{Daily graph construction and distances}
\label{subsec:daily_graph_construction}

For each day $t\in\{0,\dots,524\}$ we form a simple undirected graph
$G_t=(V_t,E_t,W_t)$ as follows.
The vertex set $V_t = V$ consists of all e-mail accounts appearing in the dataset at $t$.
An undirected edge $(u,v)$ belongs to $E_t$ if at least one e-mail
between $u$ and $v$ was recorded with timestamp in the interval
$[t,t+1)$.
Let
\[
  c_t(u,v)
  \;=\;
  \#\{\text{e-mails between $u$ and $v$ on day $t$}\}
\]
denote the corresponding count.
To interpret more frequent communication as a shorter distance, we
assign the edge weight
\[
  W_t(u,v)
  \;=\;
  \frac{1}{c_t(u,v)},
  \qquad (u,v)\in E_t
\]
for $c_t (u,v) \neq 0$. Note that the case $c_t(u,v) = 0$ does not happen for the edge $(u,v)$ because of the definition.

On each daily graph $G_t$ we consider two path-representable cost-dominated distances $d_{\text{weight}}$ and $d_{\text{edge}}$ defined in \cref{def: distance_weight,def: distance_edge}.
We have $d_{\text{weight}}\le d_{\text{edge}}$ on
each $G_t$, so \cref{Thm: main theorem_general} can be applied.
For each day $t$ and each distance $d\in\{d_{\text{weight}},
d_{\text{edge}}\}$ we compute the $1$-dimensional Vietoris--Rips
persistence barcode $\textbf{bcd}_1(G_t,d)$.
We then evaluate the $\ell^1$-based total persistence difference
\[
  \mathrm{Diff}_1\bigl(\textbf{bcd}_1(G_t,d_{\text{edge}}),
  \textbf{bcd}_1(G_t,d_{\text{weight}})\bigr),
\]
which we abbreviate as \emph{TPD}.

\begin{remark}
  By definition, $\mathrm{Diff}_p$ is an $\ell^p$-norm of the vector of persistence differences associated with all birth edges.
  For a fixed vector, $\ell^p$-norms are nonincreasing in $p$ and become more sensitive to large coordinates as $p$ increases; in   other words, $\mathrm{Diff}_p$ increasingly emphasizes intervals whose change in persistence is large.
  In this application we prefer to weight all units of persistence change linearly and not to over-emphasize a few very large differences, so we work with $\mathrm{Diff}_1$.
\end{remark}

In addition to TPD, for each daily graph $G_t$ we compute several standard graph statistics.
These statistics are not competitors to TPD; instead we use them as simple, interpretable references in order to understand what kinds of
structural changes are being detected by TPD and by the previous PH-based quantity discussed below.
For graphs without edges or with very few vertices we adopt conventions to avoid division by zero.

\begin{itemize}
  \item \textbf{Edge weight standard deviation and variance.}
        We take the standard deviation and variance of the daily edge
        weights $\{W_t(u,v)\}_{(u,v)\in E_t}$.
        When all active edges carry similar weights, these quantities
        are small, whereas highly heterogeneous communication
        patterns lead to larger values.
        If $E_t$ is empty, we define both the standard deviation and
        variance to be $0$.

  \item \textbf{Edge density.}
        We define the edge density of $G_t$ by
        \[
          \rho_t =
          \begin{cases}
            \displaystyle
            \frac{2|E_t|}{|V_t|(|V_t|-1)}, & |V_t|\ge 2,\\[0.7em]
            0, & |V_t|\le 1.
          \end{cases}
        \]
        Thus $\rho_t$ is $0$ for empty or single-vertex graphs and
        increases as edges are added, measuring how densely the daily
        graph is connected.

  \item \textbf{Average degree.}
        We define the average degree of $G_t$ by
        \[
          \bar{k}_t =
          \begin{cases}
            \displaystyle
            \frac{2|E_t|}{|V_t|}, & |V_t|\ge 1,\\[0.7em]
            0, & |V_t|=0.
          \end{cases}
        \]
        This quantity describes the average number of daily contacts
        per account.

  \item \textbf{Average clustering coefficient.}
        For each vertex $v\in V_t$ we set
        \[
          C_t(v) =
          \begin{cases}
            \displaystyle
            \frac{2T_t(v)}{k_t(v)(k_t(v)-1)}, & k_t(v)\ge 2,\\[0.7em]
            0, & k_t(v)<2,
          \end{cases}
        \]
        where $T_t(v)$ is the number of triangles containing $v$ and
        $k_t(v)$ is its degree in $G_t$.
        The average clustering coefficient is then
        \(
          \frac{1}{|V_t|}\sum_{v\in V_t} C_t(v),
        \)
        which summarizes how prevalent tightly knit local
        neighborhoods are in the daily graph.
\end{itemize}

\medskip

To compare with an existing PH-based approach on the same
data, we also reproduce the method of Hajij et al.~\cite{hajij2018visual},
which was originally developed for this dataset.
Their work is the first to propose a systematic pipeline that uses persistent homology to quantify structural changes in time-varying graphs and to visualize those changes over time.
Since our goal is to understand how the proposed total persistence difference behaves compared to such measures computed by the distance between $1$-dimensional persistence barcodes, their method provides a well-developed baseline.
Hajij et al.\ compare bottleneck distance ($p=\infty$) and
$p$-Wasserstein distance on this dataset and argue that the
$p=2$ Wasserstein distance yields a smoother and more interpretable
signal, because it aggregates perturbations across all features.%
\footnote{See Section~4.2.1 and Figure~10 of~\cite{hajij2018visual},
where they contrast noisy timelines produced by bottleneck distance
with smoother timelines obtained from $2$-Wasserstein distance on
$0$- and $1$-dimensional barcodes.}
Following their setting, in our reproduction we focus on the $2$-Wasserstein distance applied to $1$-dimensional persistence barcodes built from shortest-path distances.

We work with the same temporal edges, whose timestamps are given in seconds.
Let $t_{\min}$ and $t_{\max}$ denote the minimum and maximum
days, obtained by rescaling time by a factor $1/86400$, and let
$T = \lfloor t_{\max} \rfloor$.
For each integer day $t \in \{0,\dots,T\}$ we construct an
overlapping time window
\[
  I_t \;=\; [\,\max\{0,\,t - 0.45\},\, t + 1.45\,),
\]
measured in days.
Thus, except for the truncation at $0$ when $t=0$, each window has length $1.9$ days and is centered at $t+0.5$.
Two consecutive windows $I_t$ and $I_{t+1}$ therefore overlap on a subinterval of length $0.9$ days, so that the graph for day $t$ shares $0.45$ days of data with the graph for neighboring days.
From each window $I_t$ we build a simple weighted graph $H_t$ on the same vertex set $V$.
For any two distinct vertices $u$ and $v$ we count the number of e-mails with timestamp in $I_t$,
\[
  c_t^{\mathrm{win}}(u,v)
  \;=\;
  \#\{\text{e-mails between $u$ and $v$ whose time lies in $I_t$}\},
\]
and insert an undirected edge $(u,v)$ in $H_t$ whenever
$c_t^{\mathrm{win}}(u,v) > 0$.
The weight of this edge is taken to be the corresponding count,
\[
  W_t^{\mathrm{win}}(u,v) \;=\; c_t^{\mathrm{win}}(u,v),
  \qquad (u,v)\in E(H_t).
\]
Following~\cite{hajij2018visual}, we equip each $H_t$ with the
weighted shortest-path distance induced by these communication counts
and compute the $1$-dimensional Vietoris--Rips persistence barcode $\textbf{bcd}_1(H_t)$.
Structural change is then quantified by the $2$-Wasserstein distance between consecutive barcodes,
\[
  W_2\bigl(\textbf{bcd}_1(H_t),\,\textbf{bcd}_1(H_{t+1})\bigr),
\]
yielding a scalar time series indexed by $t$.
For brevity we refer to this baseline quantity simply as the
\emph{Wasserstein measure}.
Running this method on the same $525$-day interval produces a
Wasserstein time series that can be compared directly with the time
series of total persistence differences.

Notice the conceptual difference between the two approaches.
The Wasserstein measure compares persistence barcodes obtained from
\emph{different} graphs $H_t$ and $H_{t+1}$ built from overlapping
time windows, thereby encoding structural change across time.
In contrast, TPD compares persistence barcodes built from two
distances on the \emph{same} daily graph $G_t$, so it captures how
the choice of distance function affects $1$-dimensional persistence
\emph{without} the temporal overlap.
This lack of overlap is a key advantage of our framework.

\subsection{Correlation with graph statistics}

The first set of experiments investigates how sensitively TPD and
Wasserstein-based distances respond to changes in basic graph
statistics.
For each day $t$ we have:
\begin{enumerate}
  \item the total persistence difference
        $\mathrm{Diff}_1(\textbf{bcd}_1(G_t,d_{\text{edge}}),
        \textbf{bcd}_1(G_t,d_{\text{weight}}))$;
  \item the Wasserstein measure
        $W_2(\textbf{bcd}_1(H_t),\textbf{bcd}_1(H_{t+1}))$
        between overlapping-window graphs; and
  \item the graph statistics described above.
\end{enumerate}
We compute Pearson correlation coefficients between each of the
topological quantities and each graph statistic.
The Pearson coefficient $r\in[-1,1]$ measures linear dependence,
with $|r|$ close to $1$ indicating strong correlation~\cite{pearson1895vii}; we also report the associated
$p$-values.
\cref{fig:correlation_plots} shows scatter plots of TPD and Wasserstein measure against average degree and clustering coefficient, each point corresponding to $G_t$ and $H_t$ respectively.
\cref{tab:comparison_corr} summarizes the correlation values
for both TPD and Wasserstein measure.
Overall, TPD exhibits substantially stronger correlation with average
degree and clustering coefficient than the Wasserstein measure based
on overlapping windows.
The purpose of introducing these graph statistics is not to propose them as alternatives to PH-based descriptors.
They are standard, coarse summaries of connectivity that are widely used in network analysis, and they provide an interpretable sanity check: any PH-based quantity whose behavior is completely decoupled from such basic descriptors would be difficult to justify in applications.
In our setting we therefore use them only as a neutral reference scale against which we can compare two PH-based constructions on the same data, namely the Wasserstein measure and TPD.
\begin{figure}[h]
    \centering
    \includegraphics{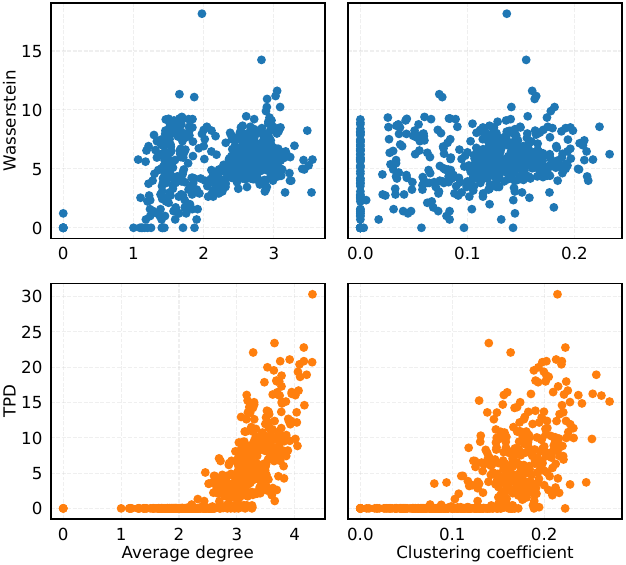}
    \caption{Scatter plots between topological distances and graph statistics.
    Top row: Wasserstein measures between overlapping-window graphs versus average degree (left) and clustering coefficient (right).
    Bottom row: total persistence difference between
    $d_{\text{weight}}$ and $d_{\text{edge}}$ on the same daily graph versus the same statistics.
    Each dot represents a single day.}
    \label{fig:correlation_plots}
\end{figure}
\begin{table}[h]
\centering
\caption{Pearson correlation between graph statistics and topological distances.}
\label{tab:comparison_corr}
\begin{tabular}{lcccc}
\toprule
\multirow{2}{*}{\textbf{Statistic}} & \multicolumn{2}{c}{\textbf{Previous Result}} & \multicolumn{2}{c}{\textbf{Total Persistence Difference}} \\
\cmidrule(lr){2-3} \cmidrule(lr){4-5}
& Correlation ($r$) & $p$-value & Correlation ($r$) & $p$-value \\
\midrule
Edge weight std.        & 0.141 & $1.23 \times 10^{-3}$ & 0.352 & $1.02 \times 10^{-16}$ \\
Edge weight variance    & 0.053 & $2.22 \times 10^{-1}$ & 0.413 & $5.09 \times 10^{-23}$ \\
Density                 & $-0.181$ & $3.01 \times 10^{-5}$ & $-0.450$ & $1.77 \times 10^{-27}$ \\
Average degree          & 0.362 & $1.25 \times 10^{-17}$ & \textbf{0.743} & $4.39 \times 10^{-93}$ \\
Clustering coefficient  & 0.394 & $7.38 \times 10^{-21}$ & \textbf{0.611} & $4.03 \times 10^{-55}$ \\
\bottomrule
\end{tabular}
\end{table}

The graph statistics are both computed on $G_t$ and $H_t$, to compare with TPD and Wasserstein measure fairly.
The fact that TPD aligns better with the basic graph statistics than the Wasserstein measure suggests that the difference between two distances on a fixed graph is able to track meaningful structural changes, rather than behaving as an overly abstract quantity.
At the same time TPD depends on $1$-dimensional homology, so it is sensitive to mesoscale cycle structure that is invisible to degree, density, or clustering alone.

\subsection{Weekday effects and inactivity periods}

Daily communication naturally varies across weekdays and weekends.
To visualize this behavior, we min--max normalize both the Wasserstein measure and TPD separately and group values by weekday, as the two quantities live on different numerical scales.
Precisely, for each method we take the raw sequence $(z_t)_{t=0}^{524}$ and calculate
\[
  z_t^{\mathrm{norm}}
  =
  \frac{z_t - \min_{0\le s\le 524} z_s}
       {\max_{0\le s\le 524} z_s - \min_{0\le s\le 524} z_s},
\]
so that each normalized series takes values in $[0,1]$.
\cref{fig:weekday_plot} shows box plots of the normalized values grouped by weekday.

\begin{figure}[h]
    \centering
    \includegraphics{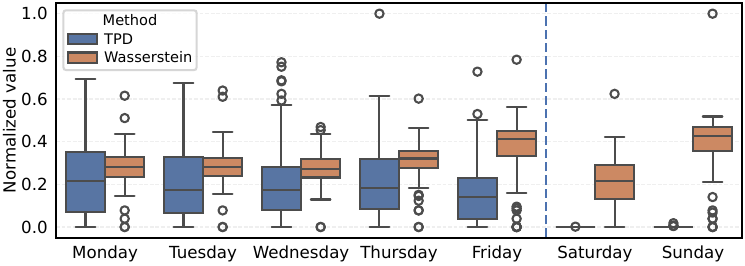}
    \caption{Box plots of normalized Wasserstein measure and normalized TPD grouped by weekday.
    The vertical dashed line separates Friday from Saturday and Sunday.
    TPD clearly decays on weekends, while the Wasserstein measure remains high.}
    \label{fig:weekday_plot}
\end{figure}

The normalized Wasserstein measure does not decay on weekends, even though many weekend graphs are structurally very simple or nearly empty.
This is consistent with the construction: each window $I_t$ contains $0.45$ days of data from both the preceding and following days, so a weekend window still includes a substantial amount of weekday traffic.
As a consequence, the Wasserstein measure compares persistence barcodes built from graphs that mix weekend and weekday edges, and it does not fully reflect how the data at a specific point of time behave.
In contrast, TPD is computed on single-day graphs $G_t$ without any
temporal overlap.
For many weekend days the corresponding daily graphs are so sparse
that $d_{\text{edge}}$ and $d_{\text{weight}}$ induce identical
$1$-dimensional persistence barcodes, and the total persistence
difference becomes zero.
A value of $\mathrm{Diff}_1 = 0$ does not mean that there were
no e-mails on that day; rather, it means that, given the simple
connectivity pattern, changing the distance from $d_{\text{weight}}$
to $d_{\text{edge}}$ does not create or destroy any $1$-dimensional
features or change their death values.
In this sense, zeros of TPD identify days on which the choice of
distance has no visible effect on $1$-dimensional topology.
We do not claim that being zero is always the desirable behavior
for every application.
Here the role of TPD is to measure the \emph{additional} effect of
changing the distance on a fixed graph.
From that perspective, it is natural and even desirable that TPD
vanishes on days when the graph has very simple cycle structure and
the two distances behave similarly, and that it takes larger values
on days when the graph supports many alternative paths and the two distances disagree.
In practice we envision TPD as an add-on feature: it should be used
alongside other PH-based or graph-based indicators, not only as a
standalone summary of activity, and its main contribution is to
highlight time periods in which the underlying distance choice matters.

\subsection{Graphs with low and high total persistence difference}

To gain structural intuition, we examine two representative daily
graphs: one with the smallest nonzero TPD among the weekdays (Day 191) and one with the largest TPD (Day 486).
Their TPD values are $0.05$ and $30.3131$, respectively.
In selecting the low-TPD example we excluded trivial and erratic cases: we restricted attention to weekdays and set a threshold $\mathrm{TPD} \ge 10^{-3}$.
Among the remaining days, Day 191 attains the smallest TPD.\footnote{Among all days with $0 <$ TPD $< 10^{-3}$ the
largest value is on the order of $2\times 10^{-16}$, which is
comparable to floating-point roundoff and is therefore treated as
numerical zero.}
\begin{figure}[h]
    \centering
    \includegraphics{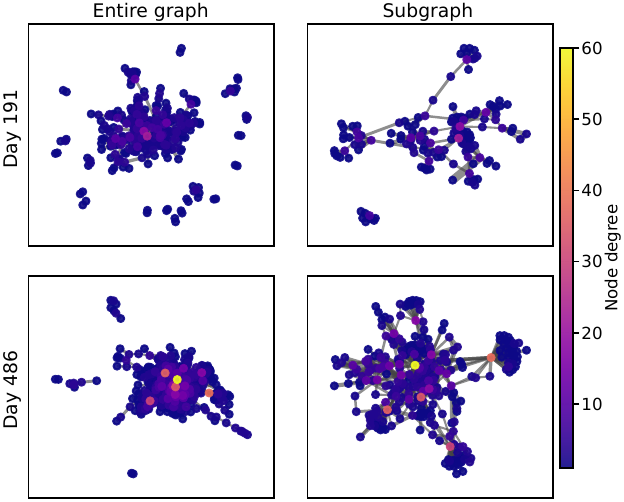}
    \caption{Daily graphs with low (Day 191) and high (Day 486)
      total persistence differences and their largest connected
      components.
      The low-difference graph is sparse with few cycles, whereas the
      high-difference graph is denser and contains many overlapping
      routes between vertices.}
    \label{fig:lowhigh_graphs}
\end{figure}

In the low-difference case the daily graph is visually sparse: many
vertices have small degree and there are relatively few edges
connecting different parts of the network.
Under such conditions, both $d_{\text{edge}}$ and
$d_{\text{weight}}$ see a similar family of paths as candidates,
so the associated $1$-dimensional persistence barcodes are similar
and TPD is small.

In the high-difference case the graph is considerably denser: many
vertices have moderate to high degree and a larger number of edges
with various weights.
Here it is easier for $d_{\text{edge}}$ and $d_{\text{weight}}$ to disagree on
which routes are ``shortest'': edges with large weight are relatively
unattractive under $d_{\text{weight}}$, but they still contribute unit
length under $d_{\text{edge}}$.
As a result the two distances can select different families of short
connections, which leads to noticeable changes in the death values in $1$-dimensional barcodes and hence to a large total persistence difference.
Thus the visual examples are consistent with the idea that TPD becomes large when the graph supports many alternative path configurations, and remains small when the connectivity is simple.

Finally, we compare the $1$-dimensional persistence barcodes for the
two representative days under both distances.
\cref{fig:barcode_comparison} shows barcodes based on
$d_{\text{edge}}$ (top row) and $d_{\text{weight}}$ (bottom row) for the
low- and high-difference days.
For the low-difference day, both distances yield relatively few
intervals, most of which are short.
In the high-difference case there are many more intervals, and the
difference in death values between the two distances is pronounced.
These examples illustrate how the total persistence difference
summarizes, in a single quantity, the cumulative change in
persistence values induced by switching the distance function on the
same underlying graph.

\begin{figure}[!htbp]
    \centering
    \includegraphics{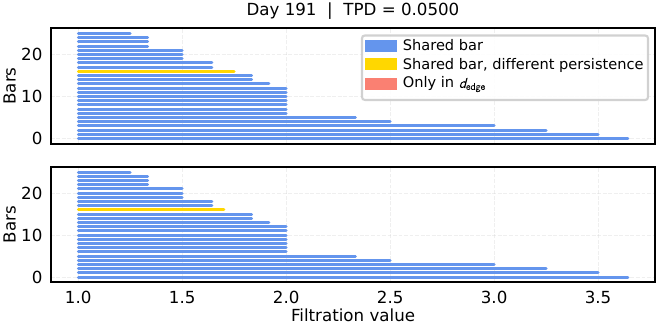}
    \includegraphics{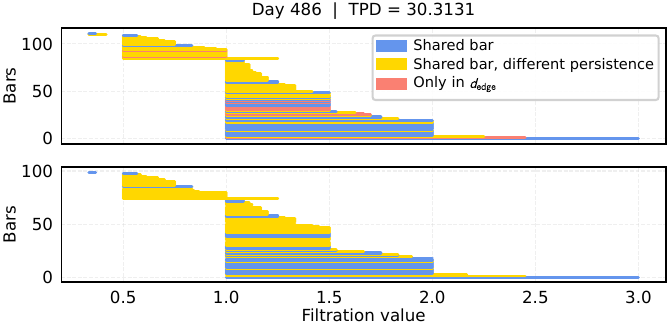}
    \caption{One-dimensional persistence barcodes on the low- and
    high-TPD days under the two distance functions.
    Top: Day~191 (TPD $=0.05$); bottom: Day~486 (TPD $=30.3131$).
    In each panel, the upper barcode is built from $d_{\text{edge}}$
    and the lower barcode from $d_{\text{weight}}$.
    Blue intervals appear in both barcodes with identical birth and death
    values $(\beta,\delta)$; yellow intervals appear in both but with
    different persistence; and red intervals appear only in the
    $d_{\text{edge}}$-based barcode.
    The numbers of intervals are $26$, $26$, $112$, and $100$ from top to
    bottom.}
    \label{fig:barcode_comparison}
\end{figure}

Taken together, these experiments suggest that the total persistence
difference is a useful complement to existing persistence-based
distances on time-varying graphs.
When the daily graphs are structurally simple, TPD is often close to
zero and existing methods such as persistence landscapes can capture topological features.
As the graphs become more complex and support many alternative routes, TPD increases and correlates well with global connectivity statistics such as average degree and clustering.
From a practical standpoint, TPD can therefore be utilized as an
\emph{add-on} quantity: it can be used alongside existing PH-based
measures to highlight regimes where the choice of distance function matters, while it becomes less sensitive whenever the graph is simple enough.
\section{Conclusion}\label{sec:Conclusion}

This paper investigates how different path-based distances on a fixed
graph affect one-dimensional persistent homology.
We focused on two natural choices on a weighted graph:
the distance $d_{\text{weight}}$ defined by the minimum sum of edge
weights along a path, and the distance $d_{\text{edge}}$ that
prioritizes paths with the smallest number of edges over
their weights.
To analyze such distances in a common framework we introduced the
notion of a path-representable distance and, within this class,
identified a cost-dominated condition in which the Vietoris--Rips
filtration admits a particular structure.

Our main theoretical result shows that, whenever
$d_i \leq d_j$ are cost-dominated path-representable distances on the
same connected graph, there exists an injective map between their
one-dimensional persistence barcodes that preserves birth values and
maps each death value to an equal or larger one.
This provides a precise relationship between the homological features
detected by different distances and clarifies how changes in the path
selection can affect the $1$-dimensional persistence.

Motivated by this structure, we introduced the \emph{total persistence
difference}, which quantifies the cumulative change in death values
between the two barcodes respecting the correspondence induced by birth edges.
We established a Lipschitz stability bound for this quantity in
terms of the $L^\infty$-difference between the underlying distances.
Numerical experiments on a temporal e-mail network suggest that the
total persistence difference can track structural changes in the graph
and often exhibits strong correlation with basic graph statistics, such as average degree and clustering coefficient.
At the same time, the experiments indicate that when the daily graphs
are structurally simple the total persistence difference is frequently
very small or even zero.
From a practical point of view, this suggests that the total
persistence difference can be used as a complementary quantity.

Several directions remain open.
The present work treats path-representable distances that follow a
single path and satisfy the consistency condition; more general
graph-based distances, or distances defined by combining several
candidate paths, are not covered directly by our framework.
The main theorem and the subsequent stability results are proved for the VR filtration on undirected graphs; extending these ideas to other filtrations or to directed graphs would be interesting topics for future research.
Finally, the injection structure and the total persistence difference could be explored in other application domains where multiple distance choices on the same graph naturally arise and may carry domain-specific meaning.

\vskip .2in
\noindent
{\bf{Acknowledgements:} }
This work was supported by National Research Foundation of Korea under grant number 2018R1D1A1A02086097 and grant number 2021R1A2C3009648. This work was also supported partially by the KIAS Transdisciplinary program grant.

\bibliographystyle{plain}
\bibliography{reference_main}

\FloatBarrier
\appendix
\section{Topological Data Analysis on Graphs}\label{app:tda-graphs}

% --- Appendix-local macros (avoid touching the main preamble) ---
\providecommand{\wGraph}{\mathbf{wGraph}}
\providecommand{\SemiMet}{\mathbf{SemiMet}}
\providecommand{\FiltSimp}{\mathbf{FiltSimp}}
\providecommand{\PersMod}{\mathbf{PersMod}}
\providecommand{\VR}{\mathrm{VR}}
\providecommand{\R}{\mathbb{R}}

Although our data throughout this paper are graphs, persistent
homology is not applied directly to the graph structure.
Instead, one has to specify how a graph is turned into a suitable
object -- a filtered simplicial complex or a persistence module -- on which the standard machinery of
topological data analysis acts.
The aim of this appendix is twofold.
First, we briefly survey several existing approaches and theories to develop
persistent homology for graph data.
Second, we formalize the specific graph-to-barcode pipeline used in
this paper, emphasizing its functorial nature and its position within
the classical (semi)metric-based framework of persistent homology.

\subsection{Persistent homology for graph data}
\label{appsec:tda-graph-survey}

Persistent homology was originally developed for filtrations of
topological spaces or abstract simplicial complexes.
A graph $G=(V,E)$ can, of course, be regarded as a $1$-dimensional
simplicial complex, but this viewpoint alone leaves little room for
filtrations beyond trivial edge-addition schemes.
Consequently, applying persistent homology to graphs requires
additional structure:
one must either enrich the graph into a higher-dimensional simplicial
complex (for instance via clique complexes), or adopt a categorical
generalization of persistence that acts directly on graphs and
networks.
Both directions have been explored in the literature.

\subsubsection{Topological constructions on graphs}

A first family of approaches starts from a weighted undirected graph
and associates to it one or more simplicial complexes whose homology
captures non-trivial higher-dimensional holes.
A classical example is the clique complex: given a simple graph
$G$, one declares a finite subset of vertices to be a simplex whenever
it forms a complete subgraph of $G$.
When edge weights are present, these complexes can be equipped with
natural filtrations, such as the weight rank clique filtration, where
edges, and hence cliques, appear as the threshold on edge weights is varied.
Bergomi et al.~\cite{bergomi2017topological} formalize
this idea under the name \emph{topological graph persistence}.
Starting from a weighted graph, they construct several associated
simplicial complexes---including clique, independent-set,
neighborhood, and enclaveless complexes---and study their persistent
homology under various graph-induced filtrations.  In this way the
graph is used as a combinatorial skeleton that generates richer
topological objects; persistent homology is then computed on these
auxiliary complexes, not on the bare graph itself. 

Other works take a similar stance but focus on particular application
domains, such as networks in statistical mechanics or biological
systems, where the graph serves as an intermediate representation and
the actual persistent homology is computed on the graph itself as a $1$-dimensional simplicial complex.
In all these cases, the pipeline has the schematic form
\[
  \mathrm{Graph}
  \;\longrightarrow\;
  \mathrm{Filtered\;simplicial\;complex}
  \;\longrightarrow\;
  \mathrm{Persistence\;module}
  \;\longrightarrow\;
  \mathrm{Barcode}.
\]
The choice of graph-to-complex construction (cliques, neighborhoods,
hyperedges, \emph{etc}.) and of filtration (e.g.\ weight thresholds)
determines which geometric or combinatorial aspects of the network are
highlighted.

\subsubsection{Categorical and network-based generalizations}

A second line of work generalizes persistent homology beyond
topological spaces, using a categorical perspective.
In this setting a persistence object is viewed as a functor
\(
  F \colon (\mathbb{R},\le) \to \mathcal{C}
\)
from the poset of scale parameters to a target category $\mathcal{C}$,
and persistence invariants are constructed in a way that depends only
on functorial data.

Bubenik and Scott~\cite{bubenik2014categorification} recast
classical persistent homology in exactly this language.  They consider
barcodes indexed by $(\mathbb{R},\leq)$ in a suitable category
(e.g.\ vector spaces) and equip the space of such barcodes with an
interleaving distance that extends the usual bottleneck distance on
persistence barcodes.  This categorical viewpoint leads to unified and
general stability theorems for several variants of persistence.
Building on this, Bergomi and Vertechi introduce
\emph{rank-based persistence}~\cite{bergomi2019rank}.
They axiomatize a generalized rank function on objects in a target category $\mathcal{C}$ so that any functor
$F \colon (\mathbb{R},\le) \to \mathcal{C}$ induces a persistence function and an associated persistence barcode.
This framework encompasses both topological persistence (where $\mathcal{C} = \mathrm{Vect}$ and rank is ordinary vector-space dimension) and more combinatorial settings, and it admits natural interleaving and bottleneck metrics. 

In later work, Bergomi, Ferri, Vertechi, and Zuffi develop a
network-centred perspective in which persistence functions are defined
directly on categories of graphs and digraphs, without first passing
to topological spaces~\cite{bergomi2021beyond}.  They introduce
\emph{categorical persistence functions} that measure strong forms of
connectedness---such as clique communities, $k$-vertex and $k$-edge
connectivity, and strong connectivity in digraphs---and show that
these invariants enjoy robustness and computability properties
analogous to classical persistent homology.

The stability of such generalized persistence diagrams is studied by
McCleary and Patel~\cite{mccleary2018bottleneck}, who extend
bottleneck stability to one-dimensional constructible persistence
modules valued in an arbitrary small abelian category.
Their results ensure that, under appropriate finiteness and positivity
assumptions, generalized persistence diagrams carry a natural
bottleneck metric that is stable with respect to interleavings of the
underlying functors.

From the point of view of this work, these categorical frameworks
serve two purposes.
First, they clarify that what ultimately matters is not the ambient
topological space but the functorial structure of the filtration:
once a graph has been transformed into a persistence module, the
usual notions of interleaving, bottleneck distance, and stability
apply.
Second, they provide conceptual justification for viewing our own
graph-based constructions as functors between appropriate categories,
as we now describe.

\subsection{From graphs to semimetric spaces via path-representable distances}
\label{appsec:graph-to-semimetric}

In this work we follow the classical route of representing a graph
as a semimetric space and then applying Vietoris--Rips persistent
homology to that space.
This keeps the analysis within the standard setting of persistent
homology on (semi)metric spaces, while still encoding non-trivial
graph structure via path-representable distances.
For precise definitions of path-representable and cost-dominated distances used in the main text, see \cref{sec:Main theory,sec:Path-representable distance}.

\subsubsection{Categories of graphs and semimetric spaces}

We begin by fixing a simple categorical framework.
Let $\mathbf{wGraph}$ denote the category whose objects are connected,
weighted, undirected simple graphs
\[
  G = (V(G), E(G), W_G),
\]
where $V(G)$ is a finite vertex set, $E(G) \subseteq \{\{u,v\}\subset V(G)\}$ is a set of undirected edges, and
$W_G \colon E(G) \to \mathbb{R}_{>0}$ assigns a positive weight to each
edge.
A morphism $\varphi \colon G \to G'$ in $\mathbf{wGraph}$ is a map
$\varphi \colon V(G) \to V(G')$ such that
\begin{itemize}
  \item if $\{u,v\}\in E(G)$ then $\{\varphi(u),\varphi(v)\}\in E(G')$, and
  \item $W_{G'}(\varphi(u),\varphi(v)) \leq W_G(u,v)$ for every
        $\{u,v\}\in E(G)$.
\end{itemize}
Thus $\varphi$ is a weight-non-increasing graph homomorphism.~\cite{asao2025classification}

Next, let $\mathbf{SemiMet}$ denote the category of finite semimetric
spaces.
An object is a pair $(X,d)$ where $X$ is a finite set and $d$ is a
semimetric on $X$.
A morphism $f \colon (X,d_X) \to (Y,d_Y)$ is a
\emph{non-expansive} map, i.e.
\[
  d_Y\bigl(f(x),f(x')\bigr)
  \;\leq\;
  d_X(x,x')
  \quad\text{for all }x,x'\in X.
\]
This is a semimetric analogue of $\mathbf{Met}$, a category of metric spaces.~\cite{deza2009encyclopedia}

\subsubsection{Path-representable distances as functors from graphs to semimetric spaces}
\label{subsec:pathrep_functor}

Given a weighted graph $G\in \mathbf{wGraph}$, a
\emph{path-representable distance} assigns to each pair of vertices
$v,w\in V(G)$ a non-negative number obtained as the cost of a
distinguished $v$--$w$ path, as in
\cref{sec:Path-representable distance}.
In general, many different path choice functions may represent the same graph,
and we do not attempt to make the full class of path-representable distances
functorial across different graphs.
In this work we focus on two distances on weighted graphs, introduced in \cref{def: distance_edge,def: distance_weight}.
For the precision, we reveal the graph where the distance is defined on, as $d_{\text{weight}}^G$.

\medskip
\noindent\textbf{A functorial distance construction.}
The weighted shortest-path distance is functorial with respect to the morphisms of
$\mathbf{wGraph}$:

\begin{lemma}
\label{lem:functorial_dweight}
Let $\varphi:G\to H$ be a morphism in $\mathbf{wGraph}$, i.e., a map
$\varphi:V(G)\to V(H)$ such that
\[
\{u,v\}\in E(G)\ \Rightarrow\ \{\varphi(u),\varphi(v)\}\in E(H),
\qquad
W_H(\varphi(u),\varphi(v))\le W_G(u,v).
\]
Define $d_{\text{weight}}^G$ and $d_{\text{weight}}^H$ by
\[
d_{\text{weight}}^G(x,y)=\min_{p:x\to y}\sum_{e\in E(p)}W_G(e),
\qquad
d_{\text{weight}}^H(x',y')=\min_{q:x'\to y'}\sum_{e\in E(q)}W_H(e).
\]
Then the same underlying vertex map
\[
\varphi_V:=\varphi:V(G)\longrightarrow V(H)
\]
is non-expansive:
\[
d_{\text{weight}}^H\bigl(\varphi_V(x),\varphi_V(y)\bigr)\ \le\ d_{\text{weight}}^G(x,y)
\qquad\text{for all }x,y\in V(G).
\]
Consequently,
\[
D_{d_{\text{weight}}}:\mathbf{wGraph}\to\mathbf{SemiMet},\qquad
G\mapsto (V(G),d_{\text{weight}}^G),\ \ \varphi\mapsto \varphi_V,
\]
is a well-defined functor.
\end{lemma}

\begin{proof}
Fix $x,y\in V(G)$ and let $p=(x=v_0,v_1,\dots,v_m=y)$ be any path in $G$.
Since $\varphi$ preserves edges, each $\{\varphi(v_{i-1}),\varphi(v_i)\}$ is an edge of $H$.
Hence $\varphi(p)=(\varphi(v_0),\varphi(v_1),\dots,\varphi(v_m))$ is a walk in $H$ from
$\varphi(x)$ to $\varphi(y)$.

By the weight-non-increasing property,
\[
W_H\bigl(\varphi(v_{i-1}),\varphi(v_i)\bigr)\ \le\ W_G(v_{i-1},v_i)
\quad\text{for each }i,
\]
so summing up gives
\[
\sum_{i=1}^m W_H\bigl(\varphi(v_{i-1}),\varphi(v_i)\bigr)
\ \le\
\sum_{i=1}^m W_G(v_{i-1},v_i).
\]
Because all edge weights are positive, any walk contains a path between the same endpoints
whose total weight is no larger.
Therefore,
\[
d_{\text{weight}}^H\bigl(\varphi(x),\varphi(y)\bigr)
\ \le\
\sum_{i=1}^m W_H\bigl(\varphi(v_{i-1}),\varphi(v_i)\bigr)
\ \le\
\sum_{i=1}^m W_G(v_{i-1},v_i).
\]
Finally, minimizing over all $G$-paths $p$ from $x$ to $y$ yields
$d_{\text{weight}}^H(\varphi(x),\varphi(y))\le d_{\text{weight}}^G(x,y)$.

Identity and composition holds because the assignment on morphisms is
the same underlying vertex map.
\end{proof}

\paragraph{On $d_{\text{edge}}$.} \label{rem:ded_nonfunctorial_thesis} In contrast to $d_{\text{weight}}$, the distance $d_{\text{edge}}$ is, in general,
\emph{not functorial} with respect to the morphisms of $\mathbf{wGraph}$.
A morphism $\varphi:G\to H$ may map $G$ into a target graph $H$ in which the images of
vertices admit additional edges not present in $G$, see \cref{fig:ded-nonfunctorial-example}.
Such new edges can change which paths are selected by the rule defining $d_{\text{edge}}$,
since $d_{\text{edge}}$ first minimizes the number of edges in a path and only then
minimizes the total weight among those minimizers.
Consequently, the non-expansiveness inequality
\[
  d_{\text{edge}}^{H}\bigl(\varphi(v),\varphi(w)\bigr)\le d_{\text{edge}}^{G}(v,w)
\]
need not hold for arbitrary morphisms in $\mathbf{wGraph}$.

\begin{figure}[t]
\centering
\begin{tikzpicture}[
  >=Latex,
  every node/.style={font=\small},
  vtx/.style={circle, draw, inner sep=1.2pt},
  wlab/.style={font=\scriptsize, fill=white, inner sep=1pt}
]

\coordinate (a) at (0,0);
\coordinate (b) at (1.3,1.8);
\coordinate (c) at (2.6,0);

\def\xshift{5.2}
\coordinate (x) at (\xshift+0,0);
\coordinate (y) at (\xshift+1.3,1.8);
\coordinate (z) at (\xshift+2.6,0);

\node[vtx] (A) at (a) {$a$};
\node[vtx] (B) at (b) {$b$};
\node[vtx] (C) at (c) {$c$};

\node[vtx] (X) at (x) {$x$};
\node[vtx] (Y) at (y) {$y$};
\node[vtx] (Z) at (z) {$z$};

\draw[thick] (A) -- node[wlab, left] {$1$} (B);
\draw[thick] (B) -- node[wlab, right] {$1$} (C);

\node[font=\small] at (1.3,2.35) {$G$};

\draw[thick] (X) -- node[wlab, left] {$1$} (Y);
\draw[thick] (Y) -- node[wlab, right] {$1$} (Z);
\draw[very thick] (X) -- node[wlab, below] {$10$} (Z);

\node[font=\small] at (\xshift+1.3,2.35) {$H$};

\draw[->, dashed] (A) to[bend left=10] (X);
\draw[->, dashed] (B) to[bend left=10] (Y);
\draw[->, dashed] (C) to[bend left=10] (Z);

\node[font=\small] at (\xshift/2+1.3,2.4) {$\varphi: G \to H$};

\node[align=center, font=\small] at (1.3,-0.95)
{$d_{\text{edge}}^{G}(a,c)=1+1=2$};

\node[align=center, font=\small] at (\xshift+1.3,-0.95)
{$d_{\text{edge}}^{H}(x,z)=10$};

\node[align=center, font=\small] at (\xshift/2+1.3,-1.65)
{$d_{\text{edge}}^{H}(\varphi(a),\varphi(c))=10\not\le 2=d_{\text{edge}}^{G}(a,c)$};

\end{tikzpicture}
\caption{A simple example showing that $d_{\text{edge}}$ is not functorial for morphisms in $\mathbf{wGraph}$: although $\varphi$ is edge-preserving and weight-non-increasing on edges of $G$, the additional edge $\{x,z\}$ in $H$ forces $d_{\text{edge}}^{H}(x,z)$ to use the single-edge route of weight $10$, so non-expansiveness fails.}
\label{fig:ded-nonfunctorial-example}
\end{figure}

This paper uses $d_{\text{edge}}$ primarily as a second distance on a \emph{fixed} graph $G$,
to be compared with $d_{\text{weight}}$ via barcode inclusion and total persistence difference.
No cross-graph functoriality of $d_{\text{edge}}$ is required for the main results in
the main text.
In particular, throughout this paper we compare the two distances
$d_{\text{weight}}^{G}\le d_{\text{edge}}^{G}$ pointwise on $V(G)\times V(G)$
for a fixed underlying graph $G$.
This pointwise inequality is a central ingredient in our later analysis of
inclusion between persistence barcodes and in the definition of total persistence difference.

\subsubsection{Vietoris--Rips filtration and persistent homology}

Having embedded graphs into semimetric spaces via the functors
$D_d$, we apply the standard Vietoris--Rips construction recalled in
the preliminaries in the main text.
Let $\mathbf{FiltSimp}$ denote the category of filtered finite
simplicial complexes indexed by $\mathbb{R}$, with morphisms given by
levelwise simplicial maps commuting with the inclusions.
To each semimetric space $(X,d)$ we associate the Vietoris--Rips
filtration
\[
  \mathrm{VR}(X,d)
  \;=\;
  \bigl(\mathrm{VR}(X,d)_\epsilon\bigr)_{\epsilon\in\mathbb{R}},
\]
where $\mathrm{VR}(X,d)_\epsilon$ has a simplex
$\sigma \subseteq X$ whenever $d(x,x')\le \epsilon$ for all
$x,x'\in\sigma$.
A non-expansive map $f \colon (X,d_X)\to (Y,d_Y)$ induces simplicial
maps $\mathrm{VR}(f)_\epsilon$ on each level, and these commute with
the inclusions of the filtrations.
Hence $\mathrm{VR}$ defines a functor
\[
  \mathrm{VR} \colon \mathbf{SemiMet} \longrightarrow \mathbf{FiltSimp}.
\]

For any fixed distance construction $d$ as above, composing
$\mathrm{VR}$ with $D_d$ yields a functor
\[
  \mathrm{VR}\circ D_d
  \colon
  \mathbf{wGraph}
  \longrightarrow
  \mathbf{FiltSimp},
  \qquad
  G \longmapsto \mathrm{VR}\bigl(V(G),d^G\bigr).
\]
In particular, we will apply this functorial pipeline to $d_{\text{weight}}$.
For $d_{\text{edge}}$, we apply Vietoris--Rips persistent homology after fixing a graph $G$
and equipping its vertex set with the semimetric $d_{\text{edge}}^{G}$.

For each $k\ge 0$, taking $k$-dimensional homology with
coefficients in $\mathbb{Z}_2$ gives a persistence module
\[
  H_k \circ \mathrm{VR}\circ D_d
  \colon
  \mathbf{wGraph}
  \longrightarrow
  \mathbf{PersMod},
\]
where $\mathbf{PersMod}$ denotes the category of pointwise finite
dimensional persistence modules indexed by $(\mathbb{R},\le)$.
Since our graphs are finite, the resulting Vietoris--Rips filtrations
are finite and the corresponding modules are tame, so they admit
persistence barcodes that enjoy the usual bottleneck stability properties, as guaranteed by the
general stability results in
\cite{bubenik2014categorification,mccleary2018bottleneck}.

From this perspective, one of the main pipelines in this paper is the composite functor
$G\mapsto H_1(\mathrm{VR}(V(G),d_{\text{weight}}^{G}))$.
In addition, for a fixed graph $G$ we also study
$H_1(\mathrm{VR}(V(G),d_{\text{edge}}^{G}))$ and compare it to the former.

The main text analyzes how pointwise inequalities between
such cost-dominated path-representable distances induce inclusions
between the corresponding Vietoris--Rips filtrations and barcodes,
while it also introduces the total persistence
difference as a scalar measure of how much the $1$-dimensional
persistence changes when the distance is altered on a fixed underlying
graph.
These constructions fit squarely within the classical functorial
framework of persistent homology on (semi)metric spaces, even though
the initial data consist of weighted graphs.

\end{document}